\definecolor{darkred}{rgb}{0.5,0,0}
\definecolor{darkgreen}{rgb}{0, 0.3,0}
\definecolor{darkblue}{rgb}{0,0,0.6}
\definecolor{LightGray}{rgb}{.6,.6,.6}
\theoremstyle{plain}
\newtheorem{proposition}{Proposition}[section]
\newtheorem{lemma}[proposition]{Lemma}
\newtheorem{corollary}[proposition]{Corollary}
\newtheorem{theorem}[proposition]{Theorem}
\theoremstyle{definition}
\newtheorem{definition}[proposition]{Definition}
\theoremstyle{remark}
\newtheorem{remark}[proposition]{Remark}
\newtheorem{example}[proposition]{Example}
\newcommand{\enum}[1]{|\!|{#1}|\!|}
\DeclareMathOperator*{\Aut}{Aut}
\DeclareMathOperator{\ar}{ar}
\DeclareMathOperator{\Bernoulli}{Bernoulli}
\newcommand{\tdef}{\textrm{def}}
\newcommand{\qf}{\textrm{qf}}
\newcommand{\tth}{\textrm{th}}
\newcommand{\fin}{\textrm{fin}}
\DeclareMathOperator{\Sp}{Sp}
\DeclareMathOperator{\Th}{Th}
\DeclareMathOperator{\qftp}{qftp}
\DeclareMathOperator{\tp}{tp}
\DeclareMathOperator{\dcl}{dcl}
\newcommand{\doublewedge}{\bigwedge}
\newcommand{\doublevee}{\bigvee}
\newcommand{\bigdoublewedge}{\bigwedge}
\newcommand{\bigdoublevee}{\bigvee}
\newcommand{\cc}{{\mathbf{c}}}
\newcommand{\n}{\mathbb{N}}
\newcommand{\bbM}{\mathbb{M}}
\renewcommand{\aa}{\overline{a}}
\newcommand{\bb}{\overline{b}}
\renewcommand{\cc}{\overline{c}}
\newcommand{\xx}{\overline{x}}
\newcommand{\yy}{\overline{y}}
\def\fM{{\EM{\mathfrak{M}}}}
\def\Th{{\text{\textnormal{Th}}}}
\newcommand{\onehalf}{\nicefrac{1}{2}}
\newcommand{\sympar}[1]{\ensuremath{\mathrm{Sym}({#1})}}
\newcommand{\sym}{\sympar{\Nats}}
\def\Str{{\mathrm{Str}}}
\def\extent#1{\EM{\llbracket{#1}\rrbracket}}
\def\bextent#1{\EM{\bigl\llbracket{#1}\bigr\rrbracket}}
\def\Bextent#1{\EM{\Bigl\llbracket{#1}\Bigr\rrbracket}}
\def\Lomega#1{{\EM{\mc{L}_{#1, \w}}}}
\def\Lwow{\Lomega{\w_1}}
\newcommand{\modelsas}{\models}
\newcommand{\FO}{{\mathrm{FO}}}
\newcommand{\Nats}{\Naturals}
\def\w{\EM{\omega}}
\def\Naturals{{\EM{{\mbb{N}}}}}
\def\^{\EM{{}^{\And}}}
\def\And{\EM{\wedge}}
\def\<{\EM{\langle}}
\def\>{\EM{\rangle}}
\newcommand{\defn}[1]{\textbf{#1}}
\newcommand{\defas}{:=}
\def\EM#1{\ensuremath{#1}}
\def\mbb#1{\EM{\mathbb{#1}}}
\def\mc#1{\EM{\mathcal{#1}}}
\newcommand{\ER}{Erd\H{o}s--R\'{e}nyi}
\definecolor{MyGreen}{rgb}{.75,0,.75}
\definecolor{RealGreen}{rgb}{0,1,0}
\definecolor{DarkGreen}{rgb}{0.1,0.4,0.1}
\definecolor{ActualGreen}{rgb}{0.0,0.5,0.0}
\definecolor{MyBlue}{rgb}{0,0,1}
\definecolor{MyRed}{rgb}{1,0,0}
\definecolor{DarkRed}{rgb}{.6,0,0}
\begin{document}


\title[Properly ergodic structures]
{Properly ergodic structures}

\author[Ackerman]{Nathanael Ackerman}
\address{
Department of Mathematics\\
Harvard University\\
Cambridge, MA 02138}
\email{nate@math.harvard.edu}

\author[Freer]{Cameron Freer}
\address{
	Remine\\
	Fairfax, VA 22031}
\email{cameron@remine.com}

\author[Kruckman]{\\Alex Kruckman}
\address{Department of Mathematics\\
Indiana University\\
Bloomington, IN 47405}
\email{akruckma@indiana.edu}

\author[Patel]{Rehana Patel}
\address{Wheaton College\\ 
Norton, MA\\
02766}
\email{patel\_rehana@wheatoncollege.edu}


\begin{abstract}
We consider ergodic $\sym$-invariant probability measures on the space of $L$-structures with domain $\Nats$ (for $L$ a countable relational language), and call such a measure a \emph{properly ergodic structure} when no isomorphism class of structures is assigned measure $1$.  We characterize those theories in countable fragments of $\Lwow$ for which there is a properly ergodic structure concentrated on the models of the theory.  We show that for a countable fragment $F$ of $\Lwow$ the almost-sure $F$-theory of a properly ergodic structure has continuum-many models (an analogue of Vaught's Conjecture in this context), but its full almost-sure $\Lwow$-theory has no models.  We also show that, for an $F$-theory $T$, if there is some properly ergodic structure that concentrates on the class of models of $T$, then there are continuum-many such properly ergodic structures.
\end{abstract}

\maketitle


\setcounter{page}{1}
\thispagestyle{empty}

\begin{small}
\renewcommand\contentsname{\!\!\!\!}
\setcounter{tocdepth}{3}
\tableofcontents
\end{small}


\section{Introduction}

Symmetric random constructions of mathematical structures have been extensively studied in probability theory, combinatorics, and logic.  One of the best-known examples is the countably infinite \ER\ random graph, whose edges are determined by an independent coin flip for each pair of vertices.  With probability 1, this process produces a particular countable graph up to isomorphism, known as the Rado graph (or random graph). 
The paper \cite{AFP} provided a characterization of those countable structures that can be produced via a symmetric random construction. 

In the present paper we study the \emph{properly ergodic structures}, namely those symmetric random constructions that do not give rise to a single mathematical structure, but rather spread their probability mass across many isomorphism classes of structures.  To do so, we make use of tools from infinitary model theory and from probability theory, especially the Aldous--Hoover--Kallenberg representation of exchangeable structures.

\subsection{Ergodic structures}
Let $L$ be a countable relational language, and
write $\Str_L$ for the measurable space of $L$-structures with domain $\n$.
We say that a probability measure on $\Str_L$ is \emph{invariant} when it is
invariant under the natural action (called the \emph{logic action}) of the permutation group $\sym$ on $\Str_L$.  An invariant probability measure
on $\Str_L$ can be thought of as a distribution on countable structures that
does not depend on the labeling of the domain.
The orbits of the logic action are the isomorphism classes of $L$-structures in $\Str_L$.

An invariant probability measure $\mu$ on $\Str_L$ is \emph{ergodic} when the null and co-null sets are the only Borel sets that are almost surely invariant. In other words, $\mu$ is ergodic if, whenever $\mu(X\triangle \sigma[X]) = 0$ for all $\sigma\in \sym$, we have $\mu(X) = 0$ or $\mu(X) = 1$. 
The ergodic invariant probability measures are extreme points in the space of invariant probability measures on $\Str_L$, and any invariant probability measure can be decomposed as a mixture of ergodic ones. (For details, see, e.g., \cite[Lemma~A1.2 and Theorem~A1.3]{MR2161313}.) 
Hence when considering invariant probability measures on $\Str_L$,
it often suffices to restrict attention to the ergodic ones.

In fact, the ergodic invariant probability measures can be thought of as random symmetric analogues of model-theoretic structures, and so we call them
\emph{ergodic structures}.
An ergodic structure $\mu$ determines the ``almost-sure truth value" of every sentence of the infinitary logic $\Lwow$, as the set of models for a sentence of $\Lwow$ is an invariant Borel set in $\Str_L$, and hence is assigned measure $0$ or $1$ by $\mu$. 
Therefore every ergodic structure has a complete almost-sure theory,
in $\Lwow$ or in any fragment of $\Lwow$.

The ergodic structures have several additional nice properties.
The Aldous--Hoover--Kallenberg theorem 
implies that every invariant probability measure on $\Str_L$ can be represented as a random process that depends on independent sources of randomness at every finite subset of $\n$ (see \S\ref{sec:aldous-hoover} for more details). The ergodic structures are those invariant measures with dissociated representations, i.e., in which the random process does not depend on ``global'' randomness (formally, randomness indexed by the empty set in the representation). Equivalently, ergodic structures are those in which the behavior on disjoint finite subsets of $\n$ is independent.

Further, the ergodic structures are exactly those invariant measures which arise as a limit, in the weak topology, of measures obtained by uniformly sampling from a finite structure.  If we restrict to a language with a single binary relation, a rich source of ergodic structures comes from those measures obtained by sampling a graphon \cite{Lovasz}. Just as graphons arise as limits of sequences of finite graphs which are convergent in the appropriate sense, ergodic structures can be viewed as limits of convergent sequences of finite $L$-structures. For details, see~\cite[\S 1.2]{kruckman-thesis}.

\subsection{Properly ergodic structures}
\label{intro-subsec-propergodic}
An ergodic structure is called \emph{properly ergodic} when it does not assign measure $1$ to any $\sym$-orbit, i.e., isomorphism class of $L$-structures. In fact, a properly ergodic structure must assign measure $0$ to every $\sym$-orbit.

The paper \cite{AFPcompleteclassification} characterized those $F$-theories (where $F$ is a countable fragment of $\Lwow$) that are the complete almost-sure $F$-theory of an ergodic structure. This characterization was in terms of trivial definable closure (see \S\ref{sec:dcl}), generalizing the result in \cite{AFP} for the case of a single $\sym$-orbit (i.e., the non-properly ergodic case).
In the present paper, we are interested in understanding which $F$-theories are the complete almost-sure $F$-theory of a properly ergodic structure.

The most well-known examples of ergodic structures (such as the \ER\ random graph described above) concentrate on a single isomorphism class, and
indeed, it is not immediately obvious how to construct any properly ergodic structures.
At the American Institute of Mathematics workshop on \emph{Graph and Hypergraph Limits} in 2011, Omer Angel 
asked whether 
the distribution on countable graphs 
induced by a graphon can 
have more than one isomorphism class in its support.
By \cite{MR2921000}, the distribution on countable graphs
induced by a graphon is ergodic, and so
this question is asking 
whether there are any properly ergodic structures that concentrate on the theory of graphs.
During the workshop G\'abor Kun provided an example of such a properly ergodic structure;
see \cite[\S2.3]{aim-workshop-notes} for details. 
In fact, a properly ergodic graph was discovered somewhat earlier by Bonato and Janssen in \cite{GeometricGraphs}; see 
Example~\ref{ex:geometric} for details.

The case of properly ergodic structures has been further considered in \cite{AFNP}, where a class of examples was constructed that concentrate on the sets of models of certain ``approximately $\aleph_0$-categorical'' first-order theories with trivial definable closure. 
One such class of examples is described in Example~\ref{ex:kaleidoscopes}.

In the present paper we characterize
those $F$-theories that are the complete almost-sure $F$-theory of a properly ergodic structure.
Additionally, we show that for any properly ergodic structure $\mu$, the complete almost-sure $\Lwow$-theory of $\mu$ has no models (of any cardinality), but that for any countable fragment $F$, the complete almost-sure $F$-theory of $\mu$ has continuum-many models up to isomorphism. This can be viewed as an analogue of Vaught's Conjecture in the setting of ergodic structures.

In~\cite{AFKP} it was shown that for every countable structure $M$, the number of ergodic structures concentrating on its isomorphism class is zero, one, or continuum.  Moreover, the case of one only occurs when $M$ is highly homogeneous, i.e., interdefinable with one of the five reducts of the rational linear order. We extend this result to the properly ergodic case, showing that if there exists a properly ergodic structure concentrating on the class of models of an $F$-theory $T$, then there are continuum-many properly ergodic structures concentrating on the class of models of $T$.

\subsection{Outline of paper}

Section~\ref{sec:background} contains some basic definitions and results, including the notion of trivial definable closure, a particular form of $\Pi_2$ sentence that we call $\Pi_2^-$, and the Aldous--Hoover--Kallenberg representation. 

In Section~\ref{sec:examples}, we provide a number of examples of properly ergodic structures, which illustrate some of their key features. 

In Section~\ref{sec:analysis} we undertake a Morley--Scott analysis of an ergodic structure $\mu$, based on Morley's proof \cite{Morley} that the number of isomorphism classes of countable models of a sentence of $\Lwow$ is countable, $\aleph_1$, or $2^{\aleph_0}$. This gives us a notion of Scott rank for ergodic structures and, in the properly ergodic case, allows us to find a countable fragment $F$ of $\Lwow$ in which there is a formula $\chi(\xx)$ which is satisfied with positive probability (under instantiations of its parameters independently sampled from $\mu$), but which picks out continuum-many $F$-types, each of which has probability $0$ of being realized. The analogue of Vaught's Conjecture mentioned above is a corollary of this analysis.

In Section~\ref{sec:rootedness}, we introduce the notion of a \emph{rooted} model of a theory. A structure $M$ is rooted if every collection of non-isolated types (e.g., the continuum-many types of measure $0$ coming from the Morley--Scott analysis) has ``few'' realizations in $M$, in a sense that we will make precise. We use the Aldous--Hoover--Kallenberg theorem to show that a structure sampled from a properly ergodic measure is almost surely rooted.

In Section~\ref{sec:construction}, given a theory $T$ having trivial definable closure, we use a single rooted model of $T$ to guide the construction, via an inverse limit, of a rooted Borel model $\bbM\models T$ equipped with an atomless probability measure $\nu$.  By sampling from $(\bbM,\nu)$, we obtain a properly ergodic structure $\mu$ that concentrates on the class of models of $T$.  The inverse limit construction is a refinement of the methods from~\cite{AFP}, \cite{AFNP}, and~\cite{AFPcompleteclassification}, which in turn generalized a construction of Petrov and Vershik~\cite{PV}. Further, we use a technique from~\cite{AFKP} to rescale $\nu$, obtaining continuum-many properly ergodic structures concentrating on the class of models of $T$. 

Putting together the results of Sections~\ref{sec:analysis}--\ref{sec:construction}, we obtain the characterization of the complete almost-sure $F$-theories of properly ergodic structures.

\section{Preliminaries}\label{sec:background}

\subsection{The space \texorpdfstring{$\Str_L$}{Str\_L}, infinitary logic, and ergodic structures}\label{sec:definitions}

Throughout this paper, let $L$ be a countable relational language. We study invariant measures on the space $\Str_L$ of $L$-structures with domain $\n$. One could formulate this work in terms of arbitrary countable languages (which allow constant and function symbols), but it turns out that one does not lose much by working in the relational case --- there are no ergodic structures in languages having constant symbols, and, in an ergodic structure, the interpretation of a function symbol must take some value among its inputs almost surely (see \cite[\S\S3--4]{AFP}). For more details on how to translate results about invariant measures for countable relational languages to the case of arbitrary countable languages, see \cite{AFPcompleteclassification}.

\begin{definition}\label{def:Str_L}
$\Str_L$ is the space of $L$-structures with domain $\n$. The topology is generated by the sets of the form $\extent{R(\aa)} = \{M\in\Str_L\mid M\models R(\aa)\}$ and $\extent{\lnot R(\aa)} = \{M\in\Str_L\mid M\models \lnot R(\aa)\}$, where $R$ ranges over the relation symbols in $L$ and $\aa$ ranges over the $\ar(R)$-tuples from $\n$.
\end{definition}

A structure $M\in\Str_L$ is uniquely determined by whether or not, for each
relation symbol $R$ in $L$ of arity $\ar(R)$ and each $\ar(R)$-tuple $\aa$ from $\n$,
\[
	M\models R(\aa)
\]
holds. It follows that $\Str_L$ is homeomorphic to the Cantor space
\[
	\prod_{R\in L} 2^{(\n^{\ar(R)})}.
\]

Recall that $\Lwow$ is the infinitary extension of first-order logic obtained by allowing, as new formula-building operations, the conjunction or disjunction of any countable ($<\omega_1$) family of formulas with a common finite ($<\omega$) set of free variables. We ensure that all our variables come from a fixed countable supply. For a reference on $\Lwow$, see~\cite{MR2062240}.

In contrast to the infinitary logic $\Lwow$, we will also be interested in the quantifier-free fragment of first-order logic, in which the only formula-building operations are negation, finite conjunction, and finite disjunction. Throughout this paper, when we speak of quantifier-free formulas and types, we mean quantifier-free first-order formulas and types.

Given a formula $\varphi(\xx) \in \Lwow$ and a tuple $\aa$ from $\mathbb{N}$ of the same length as $\xx$, we let 
\[
	\extent{\varphi(\aa)} = \{M\in \Str_L\mid M\models \varphi(\aa)\}.
\]
Every $\extent{\varphi(\aa)}$ is a Borel set in $\Str_L$. Indeed, the formula-building operations of negation and countable conjunction and disjunction correspond to the set-building operations of complementation and countable intersection and union, and quantifiers over the countable domain also correspond to certain countable intersections and unions: 
\begin{align*}
\extent{\forall x\,\varphi(\aa,x)} &= \bigcap_{b\in \n}\extent{\varphi(\aa,b)}\\
\extent{\exists x\,\varphi(\aa,x)} &= \bigcup_{b\in \n}\extent{\varphi(\aa,b)}.
\end{align*}

Restricting ourselves to finite Boolean operations, $\extent{\varphi(\aa)}$ is a clopen set when $\varphi(\xx)$ is a quantifier-free formula. In fact, by compactness, every clopen set in $\Str_L$ has the form $\extent{\varphi(\aa)}$ for some quantifier-free formula $\varphi$.

Let $\sym$ denote the permutation group of $\n$.
\begin{definition} \label{def:logicaction}
The \defn{logic action} is the natural action of $\sym$ on $\Str_L$, given by permuting the underlying set. Namely, for $\sigma\in \sym$ and $M\in\Str_L$, we have
\[
	\sigma(M)\models R(a_1,\dots,a_n)\quad\text{ if and only if } \quad M\models R\bigl(\sigma^{-1}(a_1), \dots, \sigma^{-1}(a_n)\bigr)
\]
for all $R \in L$.
\end{definition}

Note that $\sigma(M) = N$ if and only if $\sigma\colon M\to N$ is an isomorphism of $L$-structures, so the orbit of a point $M\in \Str_L$ under the logic action is the set of all structures in $\Str_L$ which are isomorphic to $M$. We recall Scott's theorem, which says that this set is definable by a sentence of $\Lwow$. 

\begin{theorem}[Scott, {\cite[Theorem~2.4.15]{Marker}}]\label{thm:scott}
For any countable structure $M$, there is a sentence $\varphi_M$ of $\Lwow$, the \defn{Scott sentence of $M$}, such that for all countable structures $N$, we have $N\models \varphi_M$ if and only if $N\cong M$. \end{theorem}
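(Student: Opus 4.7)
The plan is to construct $\varphi_M$ via the Scott analysis: a transfinite hierarchy of $\Lwow$-formulas that progressively approximate the $\Aut(M)$-orbits on finite tuples. First I would define, for each $n$-tuple $\aa$ from $M$ and each ordinal $\alpha$, a formula $\varphi^\alpha_{\aa}(\xx)$ with $|\xx| = n$ by transfinite recursion: take $\varphi^0_{\aa}(\xx)$ to be the conjunction of all atomic and negated atomic $L$-formulas satisfied by $\aa$ in $M$ (countable since $L$ is); at successor stages set
\[
\varphi^{\alpha+1}_{\aa}(\xx) \defas \varphi^\alpha_{\aa}(\xx) \wedge \bigwedge_{b \in M} \exists y\, \varphi^\alpha_{\aa b}(\xx, y) \wedge \forall y \bigvee_{b \in M} \varphi^\alpha_{\aa b}(\xx, y);
\]
and at limit $\alpha$ take $\varphi^\alpha_{\aa}(\xx) \defas \bigwedge_{\beta < \alpha} \varphi^\beta_{\aa}(\xx)$. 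A routine induction confirms that each $\varphi^\alpha_{\aa}$ is an $\Lwow$-formula, that $M \models \varphi^\alpha_{\aa}(\aa)$, and that $\varphi^\alpha_{\aa} \to \varphi^\beta_{\aa}$ holds in $M$ whenever $\beta \leq \alpha$.

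Next I would show that the hierarchy stabilizes at a countable ordinal. After checking by induction that the relation $\aa \sim_\alpha \bb$ defined by $M \models \varphi^\alpha_{\aa}(\bb)$ is an equivalence relation on $M^n$, the partitions $\sim_\alpha$ form a refining chain in $\alpha$. For each pair of tuples eventually separated by the hierarchy, let $\alpha_{\aa,\bb}$ be the least $\alpha$ with $\aa \not\sim_\alpha \bb$; since $M$ is countable there are only countably many such pairs across all arities, so their supremum is a countable ordinal $\alpha^*$ at which the partitions $\sim_\alpha$ are constant for every $n$. The Scott sentence is then
\[
\varphi_M \defas \varphi^{\alpha^*}_{\langle\rangle} \wedge \bigwedge_{n < \omega} \bigwedge_{\aa \in M^n} \forall \xx \bigl(\varphi^{\alpha^*}_{\aa}(\xx) \to \varphi^{\alpha^*+1}_{\aa}(\xx)\bigr),
\]
which is a countable conjunction of $\Lwow$-sentences, hence itself a sentence of $\Lwow$, and is clearly satisfied by $M$.

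To close the proof I would verify that any countable $N \models \varphi_M$ is isomorphic to $M$ by a back-and-forth argument: the set $I \defas \{(\aa, \bb) \in M^{<\omega} \times N^{<\omega} : N \models \varphi^{\alpha^*}_{\aa}(\bb)\}$ contains the empty pair (by the first conjunct of $\varphi_M$) and enjoys the two-sided extension property (the second conjunct of $\varphi_M$ forces each $\varphi^{\alpha^*}_{\aa}$ holding in $N$ to imply the existential and universal clauses of $\varphi^{\alpha^*+1}_{\aa}$, ensuring every new element on either side can be matched). Enumerating $M$ and $N$ and alternately extending produces the desired isomorphism. I expect the main obstacle to be the simultaneous stabilization step: one must justify that a single countable $\alpha^*$ works uniformly across all arities $n$, and that the local ``back-and-forth closure'' captured by the implication $\varphi^{\alpha^*}_{\aa} \to \varphi^{\alpha^*+1}_{\aa}$ really does transfer to arbitrary countable models of $\varphi_M$ rather than merely to $M$ itself. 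The former is a countable-supremum-of-countables argument, and the latter is precisely what the second conjunct of $\varphi_M$ is engineered to ensure.
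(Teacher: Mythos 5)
The paper does not prove this statement itself (it quotes it with a citation to Marker, Theorem 2.4.15), and your argument is exactly the standard Scott-analysis proof given there: the hierarchy $\varphi^\alpha_{\aa}$, stabilization at a countable ordinal $\alpha^*$, the sentence $\varphi^{\alpha^*}_{\langle\rangle} \wedge \bigwedge_{\aa} \forall \xx\,(\varphi^{\alpha^*}_{\aa}(\xx) \to \varphi^{\alpha^*+1}_{\aa}(\xx))$, and the back-and-forth system $I$. The proposal is correct; the only step to spell out is the one you flag yourself, namely that each least separating ordinal $\alpha_{\aa,\bb}$ is countable --- which follows since separation is monotone in $\alpha$ and the stage-$(\alpha+1)$ partition is determined by the stage-$\alpha$ partition, so with only countably many pairs the refinement must stop strictly below $\omega_1$ --- after which your countable-supremum argument goes through.
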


We are now able to define the class of invariant measures on $\Str_L$, and specifically, the ergodic and properly ergodic ones.

\begin{definition}\label{def:measures}
Let $\mu$ be a Borel probability measure on $\Str_L$. 
We say that $\mu$ is \defn{invariant} (under the logic action) if, for every Borel set $X$ and every $\sigma\in\sym$, we have $\mu(\sigma[X]) = \mu(X)$. 

Now suppose that $\mu$ is invariant.
A Borel set $X$ is \defn{almost surely invariant} if $\mu(X \triangle \sigma[X]) = 0$ for all $\sigma\in\sym$. 
We say that $\mu$ is \defn{ergodic} if, for every almost surely invariant Borel set $X$, either $\mu(X) = 0$ or $\mu(X) = 1$. 
Following terminology from \cite[\S I.2]{MR1781937} and elsewhere, we say that $\mu$ is \defn{properly ergodic} if $\mu(X)= 0$ for every orbit $X$ of the logic action.
\end{definition}

\begin{definition} \label{def:ergodicstructure}
An \defn{ergodic structure} is an ergodic invariant probability measure on $\Str_L$. 
\end{definition}

This definition takes on a more concrete character if we restrict our attention to the measures assigned to instances of quantifier-free formulas. The following proposition is an application of the Hahn--Kolmogorov measure extension theorem \cite[Theorem 1.7.8, Exercise 1.7.7]{Tao}.

\begin{proposition}\label{prop:stonemeasure}
Let $\mathcal{B}^*$ be the Boolean algebra of clopen sets in $\Str_L$ (so $\mathcal{B}^*$ consists of those sets of the form $\extent{\varphi(\aa)}$, where $\varphi$ is a quantifier-free formula and $\aa$ is a tuple from $\n$). Any finitely additive measure $\mu^*$ on $\mathcal{B}^*$ extends to a unique Borel probability measure $\mu$ on $\Str_L$. Moreover, $\mu$ is invariant if and only if $\mu^*$ is; that is, if and only if $\mu^*(\extent{\varphi(\aa)}) = \mu^*(\extent{\varphi(\sigma(\aa))})$ for any $\sigma\in \sym$.
\end{proposition}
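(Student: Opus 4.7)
The plan is to view this as a direct application of the Hahn--Kolmogorov/Carath\'eodory extension theorem, with two points needing verification: first, that $\mu^*$ is automatically countably additive on $\mathcal{B}^*$ (so that Hahn--Kolmogorov applies), and second, that invariance on $\mathcal{B}^*$ transfers to invariance on the Borel $\sigma$-algebra via uniqueness of the extension.

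First I would note that $\Str_L$ is a Cantor space (as observed in the excerpt) and hence compact, and that $\mathcal{B}^*$ generates the Borel $\sigma$-algebra (since the subbasic clopens $\extent{R(\aa)}$ and $\extent{\lnot R(\aa)}$ already lie in $\mathcal{B}^*$). To apply Hahn--Kolmogorov, I need countable additivity of $\mu^*$ on $\mathcal{B}^*$. The usual compactness trick does this automatically: if $X = \bigsqcup_{n \in \n} X_n$ is a disjoint countable union of clopens with $X \in \mathcal{B}^*$, then the sets $X \setminus \bigcup_{n < N} X_n$ form a decreasing sequence of closed sets with empty intersection in the compact space $\Str_L$, so some finite stage is already empty; hence the disjoint union is really finite, and finite additivity gives $\mu^*(X) = \sum_n \mu^*(X_n)$. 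Hahn--Kolmogorov then produces a unique Borel probability measure $\mu$ on $\Str_L$ extending $\mu^*$.

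Next I would establish the invariance statement. The key observation is that the logic action preserves $\mathcal{B}^*$: from Definition~\ref{def:logicaction}, one checks directly that $\sigma[\extent{\varphi(\aa)}] = \extent{\varphi(\sigma(\aa))}$ for every quantifier-free $\varphi$, tuple $\aa$, and $\sigma \in \sym$ (prove it for atomic $\varphi$ and extend by induction on Boolean structure). The forward direction is immediate: if $\mu$ is invariant, then in particular $\mu^*(\extent{\varphi(\aa)}) = \mu(\extent{\varphi(\aa)}) = \mu(\sigma[\extent{\varphi(\aa)}]) = \mu^*(\extent{\varphi(\sigma(\aa))})$.

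For the converse, suppose $\mu^*$ is invariant on $\mathcal{B}^*$, and fix $\sigma \in \sym$. Define $\nu(X) := \mu(\sigma[X])$ for Borel $X$; this is again a Borel probability measure on $\Str_L$. Restricted to $\mathcal{B}^*$, it satisfies $\nu(\extent{\varphi(\aa)}) = \mu^*(\extent{\varphi(\sigma(\aa))}) = \mu^*(\extent{\varphi(\aa)})$ by hypothesis, so $\nu$ and $\mu$ agree on $\mathcal{B}^*$. By the uniqueness clause of Hahn--Kolmogorov, $\nu = \mu$ on all Borel sets, which is exactly the invariance of $\mu$ under $\sigma$. Since $\sigma$ was arbitrary, $\mu$ is invariant. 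No serious obstacle is expected; the only subtlety is the compactness argument for automatic countable additivity, which is standard but worth spelling out.
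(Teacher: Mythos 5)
Your proof is correct and takes the same approach the paper signals (the paper states the proposition without proof, citing only the Hahn--Kolmogorov extension theorem). You supply exactly the standard details: automatic countable additivity on the clopen algebra via compactness of $\Str_L$, and transfer of invariance to the Borel extension via the identity $\sigma[\extent{\varphi(\aa)}] = \extent{\varphi(\sigma(\aa))}$ together with the uniqueness clause.
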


\begin{remark}\label{rem:ergodic}
Additionally, it follows from Theorem~\ref{thm:ergodic} below that an invariant measure $\mu$ on $\Str_L$ is ergodic if and only if the quantifier-free types of disjoint tuples from $\n$ are independent. That is, whenever $\varphi(\xx)$ and $\psi(\yy)$ are quantifier-free formulas and $\aa$ and $\bb$ are disjoint tuples from $\n$ whose lengths are those of $\xx$ and $\yy$ respectively, we have $\mu(\extent{\varphi(\aa)\land \psi(\bb)}) = \mu(\extent{\varphi(\aa)})\,\mu(\extent{\psi(\bb)})$. 
\end{remark}

\textbf{For the remainder of this subsection, let $\mu$ be an ergodic structure.}

\begin{remark}\label{measure-of-formula}
	If $\varphi(\xx)$ is a formula of $\Lwow$ and $\aa$ is a tuple of distinct elements of $\n$ (of the same length as $\xx$), then, since $\mu$ is invariant under the logic action, the value $\mu(\extent{\varphi(\aa)})$ is independent of the choice of $\aa$. For convenience, we denote this quantity by $\mu(\varphi(\xx))$, and refer to it as the \emph{measure of the formula $\varphi$}. Note that under this convention, if $\varphi(\xx)$ implies $x_i = x_j$ for some $i\neq j$, then $\mu(\varphi(\xx)) = 0$.
\end{remark}

\begin{definition}\label{def:modelsas}
If $\varphi$ is a sentence of $\Lwow$, we say $\mu$ \defn{almost surely satisfies} $\varphi$, or $\mu$ \defn{concentrates on} $\varphi$, if $\mu(\varphi) = 1$. We write $\mu\modelsas \varphi$, and we set
\[
	\Th(\mu) = \{\varphi\in\Lwow\mid \mu\modelsas\varphi\}.
\]
Similarly, if $\Sigma$ is a set of sentences of $\Lwow$, we write $\mu\modelsas \Sigma$ if $\mu\modelsas \varphi$ for all $\varphi\in \Sigma$, and say that $\mu$ is an \defn{ergodic model} of $\Sigma$.
\end{definition}

The following result is a connection between infinitary logic and ergodic invariant measures; see also \cite{AFPcompleteclassification}.

\begin{proposition}\label{prop:complete}
$\Th(\mu)$ is a complete and countably consistent theory of $\Lwow$. That is, for every sentence $\varphi$ of $\Lwow$, $\varphi\in \Th(\mu)$ or $\lnot\varphi\in \Th(\mu)$, and every countable subset $\Sigma\subseteq \Th(\mu)$ has a model.
\end{proposition}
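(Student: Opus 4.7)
The plan is to handle the two assertions separately, using ergodicity for completeness and countable additivity for consistency.

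For completeness, the first step is to observe that for any $\Lwow$-sentence $\varphi$, the set $\extent{\varphi}$ is actually invariant under the logic action, not just almost surely invariant. This is because isomorphisms preserve truth of sentences: if $M\models \varphi$ and $\sigma\in\sym$, then $\sigma\colon M\to\sigma(M)$ is an $L$-isomorphism, so $\sigma(M)\models\varphi$. Hence $\mu(\extent{\varphi}\triangle\sigma[\extent{\varphi}])=0$ trivially, and ergodicity of $\mu$ forces $\mu(\extent{\varphi})\in\{0,1\}$. Since $\extent{\lnot\varphi}=\Str_L\setminus\extent{\varphi}$, exactly one of $\varphi,\lnot\varphi$ belongs to $\Th(\mu)$, giving completeness.

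For countable consistency, let $\Sigma=\{\varphi_n\st n\in\n\}\subseteq\Th(\mu)$. Since $\mu(\extent{\varphi_n})=1$ for each $n$, countable subadditivity (applied to the complements) yields
\[
\mu\Bigl(\bigcap_{n\in\n}\extent{\varphi_n}\Bigr)=1.
\]
In particular this intersection is nonempty, and any $M$ in it is a model of $\Sigma$. The key point enabling this step is that $\extent{\varphi}$ is a Borel set for every $\Lwow$-formula $\varphi$, which has already been established in the preceding discussion of $\Str_L$ via the translation of countable conjunctions/disjunctions and quantifiers into countable intersections/unions of Borel sets.

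Neither step presents a serious obstacle; the only subtlety is the observation that for a \emph{sentence} (no free variables) the extent set is genuinely $\sym$-invariant, so we may invoke ergodicity directly without any appeal to the ``almost surely invariant'' formulation. Once that is noted, both parts are routine measure-theoretic arguments.
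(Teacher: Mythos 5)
Your proposal is correct and follows essentially the same argument as the paper: note that $\extent{\varphi}$ is an invariant Borel set (hence in particular almost surely invariant), apply ergodicity to get $\mu(\varphi)\in\{0,1\}$, and then use that a countable intersection of measure-$1$ sets has measure $1$, hence is nonempty, to obtain a model of any countable $\Sigma\subseteq\Th(\mu)$. Your extra remark that the extent of a sentence is genuinely invariant, not merely almost surely so, is a slight explicitation of the same point the paper makes in passing.
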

\begin{proof}
For any sentence $\varphi$, the set $\extent{\varphi}$ is an invariant Borel set. In particular, it is almost surely invariant, so by ergodicity, $\mu(\varphi) = 0$ or $1$, and hence $\mu\modelsas \varphi$ or $\mu\modelsas \lnot\varphi$. Now let $\Sigma$ be a countable subset of $\Th(\mu)$. Since a countable intersection of measure $1$ sets has measure $1$, $\mu(\bigdoublewedge_{\varphi\in\Sigma}\varphi) = 1$. In particular, $\extent{\bigdoublewedge_{\varphi\in\Sigma}\varphi}$ is non-empty.
\end{proof}

A special case of Definition~\ref{def:modelsas} is when the sentence $\varphi$ is a Scott sentence.

\begin{definition}\label{def:asisomorphic}
If $M$ is a countable structure, we say that $\mu$ is \defn{almost surely isomorphic to} $M$, or $\mu$ \defn{concentrates on} $M$, if $\mu\modelsas \varphi_M$, where $\varphi_M$ is the Scott sentence of $M$; equivalently, $\mu$ assigns measure $1$ to the orbit of $M$. \end{definition}

\begin{remark}\label{rem:negscott}
If $\mu$ is properly ergodic, then $\Th(\mu)$ contains $\lnot\varphi_M$ for every countable structure $M$, and thus $\Th(\mu)$ has no countable models. A priori, $\Th(\mu)$ may have uncountable models (L\"owenheim--Skolem does not apply to complete theories of $\Lwow$), but we will see later (Corollary~\ref{cor:nomodels}) that this is not the case: $\Th(\mu)$ has no models of any cardinality. Nevertheless, as noted in Proposition~\ref{prop:complete}, every countable subset of $\Th(\mu)$ has countable models. This suggests that we should restrict our attention to countable fragments of $\Lwow$. 
\end{remark}

\begin{definition}\label{def:fragment}
 A \defn{fragment} of $\Lwow$ is a set of formulas which contains all atomic formulas and is closed under subformulas, finite Boolean combinations, quantification, and substitution of free variables (from the countable supply). If $F$ is a fragment of $\Lwow$, we set 
 \[
	 \Th_F(\mu) = \{\varphi\in F\mid \mu\modelsas \varphi\}.
 \]
\end{definition}

A countable set of formulas $\Phi$ generates a countable fragment $\langle \Phi\rangle$, the least fragment containing this set. The minimal fragment $\FO \defas \langle \emptyset \rangle$ is first-order logic.

\begin{definition}\label{def:Fstuff}
Let $F$ be a countable fragment of $\Lwow$. 
\begin{itemize}
\item A set of sentences $T$ is a (complete satisfiable) \defn{$F$-theory} if $T$ has a model and, for every sentence $\varphi\in F$, either $\varphi\in T$ or $\lnot \varphi\in T$. Equivalently, 
	there is a structure $M$ for which
	$T = \{\psi\in F\mid M\models \psi\}$.
\item A set of formulas $p(\xx)$ is an \defn{$F$-type} if there is a structure $M$ and a tuple $\aa$ from $M$ such that $p(\xx) = \{\psi(\xx)\in F\mid M\models \psi(\aa)\}$. We say that $\aa$ \defn{realizes} $p$ in $M$. 
\item An $F$-type $p$ is \defn{consistent} with an $F$-theory $T$ if it is realized in some model of $T$, and we write $S^n_F(T)$ for the set of $F$-types in $n$ variables which are consistent with $T$.
\end{itemize}
\end{definition}

\begin{remark}\label{rem:countablemodels}
The L\"{o}wenheim--Skolem theorem holds for countable fragments of $\Lwow$ (see~\cite[Theorem 1.5.4]{MR2062240}). Thus, if $F$ is countable, every $F$-theory has a countable model and every $F$-type which is consistent with $T$ is realized in a countable model of $T$.
\end{remark}

\begin{remark}\label{measure-of-type}
	If $F$ is countable and $p$ is an $F$-type, then we denote by $\theta_p(\xx)$ the conjunction of all the formulas in $p$, i.e., $\doublewedge_{\varphi\in p}\varphi(\xx)$. This is a formula of $\Lwow$ (although not a formula of $F$ in general), so it is assigned a measure by our ergodic structure $\mu$, as described in Remark~\ref{measure-of-formula}. We will write $\mu(p)$ as shorthand for $\mu(\theta_p(\xx))$, and refer to this as the \emph{measure of the type $p$}. This is the probability, according to $\mu$, that any given tuple of distinct elements of $\n$ (of the appropriate arity) satisfies $p$.
\end{remark}

\subsection{Trivial definable closure}\label{sec:dcl}

The paper \cite{AFPcompleteclassification} shows that trivial definable closure is 
a necessary and sufficient condition for a theory (in a countable fragment) to have an an ergodic structure which satisfies it.
Here we state several definitions and basic facts, and we provide a proof of one direction of this characterization.

\begin{definition}\label{def:dcl}
	Let $F$ be a fragment of $\Lwow$. An $F$-theory $T$ has \defn{trivial definable closure} (abbreviated \defn{trivial dcl})
	if there is no formula $\varphi(\xx,y)$ in $F$ such that 
\[
\textstyle
T \models
\exists \xx\, \exists ! y\, \bigl((\bigwedge_{i=1}^n y\neq x_i) \land \varphi(\xx,y) \bigr).
\]
Here $\exists !\,y$ is the standard abbreviation for ``there exists a unique $y$''.
\end{definition}

\begin{remark}\label{rem:dcl}
If $T$ is the complete $F$-theory of a structure $M$, then $T$ has trivial dcl if and only if $M$ has trivial dcl for the fragment $F$ in the usual sense: $\dcl_F(A) = A$ for all $A\subseteq M$, where $\dcl_F(A)$ is the set of all $b\in M$ such that $b$ is the unique element of $M$ satisfying some formula in $F$ with parameters from $A$.

If $\varphi(\xx,y)$ witnesses that $T$ has nontrivial dcl, then taking $\varphi^*$ to be the formula $\varphi(\xx,y) \land \exists^{\leq 1}y\, \varphi(\xx,y)$ we have the stronger condition that $T$ proves that $\varphi^*$ is a definable function on some non-empty domain. That is, 
\[
	\textstyle
	T \models
	\left(\exists\xx\,\exists y\, (\bigwedge_{i = 1}^n y\neq x_i) \land \varphi^*(\xx,y)\right)\land \left(\forall \xx\, \exists^{\leq 1} y\, \varphi^*(\xx,y)\right).
\]
Here $\exists^{\leq 1}y$ is the standard abbreviation for ``there is at most one $y$''.
\end{remark}

The following argument first appeared (in a slightly different setting) in \cite[Theorem~4.1]{AFP}; as stated, this result is from \cite{AFPcompleteclassification}. We include it here for completeness.

The key observation is the standard fact that
if a measure is invariant under the action of some group $G$, then no positive-measure set can have infinitely many almost surely disjoint images under the action of $G$.

\begin{theorem}\label{thm:trivialdcl} 
Let $\mu$ be an ergodic structure and $F$ a fragment of $\Lwow$. Then $\Th_F(\mu)$ has trivial dcl. 
\end{theorem}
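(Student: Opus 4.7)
The plan is to argue by contraposition: assuming $\Th_F(\mu)$ has nontrivial dcl, we will produce infinitely many positive-measure pairwise (almost surely) disjoint Borel sets related by the logic action, which contradicts the invariance of $\mu$. This is exactly the key observation quoted in the paper just before the statement.

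Suppose for contradiction that $\Th_F(\mu)$ has nontrivial dcl, witnessed by some $\varphi(\xx,y) \in F$ with, say, $|\xx| = n$. By Remark~\ref{rem:dcl}, we may replace $\varphi$ by the strengthened formula $\varphi^*(\xx,y) = \varphi(\xx,y) \land \exists^{\leq 1}y\,\varphi(\xx,y)$, so that $\mu$ almost surely satisfies both the existence statement $\exists \xx\,\exists y\,\bigl((\bigwedge_{i=1}^n y\neq x_i) \land \varphi^*(\xx,y)\bigr)$ and the uniqueness statement $\forall \xx\,\exists^{\leq 1}y\,\varphi^*(\xx,y)$. The event that witnesses the existential statement is the countable union, over tuples $\aa$ of distinct elements of $\n$ and elements $b \in \n \setminus \aa$, of the clopen sets $\extent{\varphi^*(\aa,b)}$. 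By countable additivity, at least one such set has positive measure; fix such a tuple $\aa$ and element $b_0$, and let $r = \mu(\extent{\varphi^*(\aa,b_0)}) > 0$.

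Now I would exploit the invariance of $\mu$: for each $b \in \n \setminus \aa$, there is a permutation $\sigma_b \in \sym$ that fixes $\aa$ pointwise and sends $b_0$ to $b$. A direct unwinding of the definition of the logic action shows $\sigma_b[\extent{\varphi^*(\aa,b_0)}] = \extent{\varphi^*(\aa,b)}$, and hence $\mu(\extent{\varphi^*(\aa,b)}) = r$ for every $b \in \n \setminus \aa$. On the measure-$1$ set on which the uniqueness statement holds, the events $\{\extent{\varphi^*(\aa,b)}\}_{b \in \n \setminus \aa}$ are pairwise disjoint. Countable additivity then yields $\sum_{b \in \n \setminus \aa} \mu(\extent{\varphi^*(\aa,b)}) \leq 1$, contradicting the fact that this is an infinite sum of copies of $r > 0$.

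The only mildly delicate point — and the one I would be most careful about — is verifying that the reduction from $\varphi$ to $\varphi^*$ preserves the property of being in $F$ (it does, since $F$ is closed under finite Boolean combinations and quantification) and that uniqueness is a single $F$-sentence whose almost-sure truth lets us treat the $\extent{\varphi^*(\aa,b)}$ as honestly disjoint up to a null set, rather than getting tangled in repeated intersections with the uniqueness event. Beyond that the argument is short and is essentially the standard observation about invariant measures and infinite group orbits of positive-measure sets.
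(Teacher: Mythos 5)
Your argument is correct and takes essentially the same route as the paper's proof: find a positive-measure instance $\extent{\varphi^*(\aa,b_0)}$ by countable additivity, use invariance under permutations fixing $\aa$ pointwise to get infinitely many instances of the same positive measure, and use the almost-surely-true uniqueness sentence to make these instances pairwise disjoint up to a null set, contradicting countable additivity (the paper instead conjoins a parametrized uniqueness formula $\theta(\aa)$ to get literal disjointness, which is only a cosmetic difference). Two immaterial slips: the sets $\extent{\varphi^*(\aa,b)}$ are Borel rather than clopen unless $\varphi^*$ is quantifier-free, and the countable union witnessing the existential sentence should range over all tuples $\aa$, not only tuples of distinct elements; neither affects the argument.
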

\begin{proof}
Suppose there is a formula $\varphi(\xx,y)$ in $F$ such that 
\[
	\textstyle\mu\left(\exists\xx\,\exists! y\, \bigl(\left(\bigwedge_{i=1}^n y\neq x_i\right) \land \varphi(\xx,y)\bigr)\right) = 1.
\]
Let $\psi(\xx,y)$ be the formula $\left(\bigwedge_{i=1}^n y\neq x_i\right) \land \varphi\left(\xx,y\right)$. 

By countable additivity of $\mu$, there is a tuple $\aa$ from $\n$ such that 
\[
	\mu(\extent{\exists! y\, \psi(\aa,y)}) > 0.
\]

Let $\theta(\aa)$ be the formula $\forall z_1\,\forall z_2\,(\psi(\aa,z_1)\land \psi(\aa,z_2)\rightarrow(z_1 = z_2))$, so that $\exists! y\, \psi(\aa,y)$ is equivalent to 
\[
	\exists y\, \psi(\aa,y) \land \theta(\aa).
\]
 
Since this formula has positive measure, countable additivity again implies that there is some $b\in\n\setminus \aa$ such that 
\[
	\beta \defas \mu(\extent{\psi(\aa,b) \land \theta(\aa)}) > 0.
\]

By invariance, for any $c\in\n\setminus\aa$, we also have 
\[
	\mu(\extent{\psi(\aa,c) \land \theta(\aa)}) = \beta.
\]

But $\theta$ ensures that $\psi(\aa,b) \land \theta(\aa)$ and $\psi(\aa,c) \land \theta(\aa)$ are inconsistent when $b\neq c$, so, computing the measure of the disjoint union,
\[
	\textstyle \mu\left(\bigcup_{b\in\n\setminus \aa} \extent{\psi(\aa,b) \land \theta(\aa)}\right) = \textstyle \sum_{b\in\n\setminus \aa} \beta =\infty,
\]
which is impossible.
\end{proof}

In the language of \S\ref{sec:definitions}, the main result of \cite{AFP} was a characterization of those countable structures $M$ such that there exists an ergodic structure $\mu$ which is almost surely isomorphic to $M$. That characterization was given in terms of trivial ``group-theoretic'' dcl (where the group is $\Aut(M)$).

\begin{definition}\label{def:groupdcl}
A countable structure $M$ has \defn{trivial group-theoretic dcl} if for any finite subset $A\subseteq M$ and element $b\in M\setminus A$, there is an automorphism $\sigma\in \Aut(M)$ such that $\sigma(a) = a$ for all $a\in A$, but $\sigma(b)\neq b$.
\end{definition}

\begin{theorem}[{\cite[Theorem~1.1]{AFP}}]\label{thm:AFP1}
Let $M$ be a countable structure. There exists an ergodic structure concentrating on $M$ if and only if $M$ has trivial group-theoretic dcl.
\end{theorem}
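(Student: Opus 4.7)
Suppose $\mu$ is an ergodic structure concentrating on $M$, and assume for contradiction that group-theoretic dcl fails, witnessed by a finite tuple $\aa = (a_1,\ldots,a_n)$ from $M$ and a point $b \in M\setminus\aa$ fixed by the pointwise stabilizer of $\aa$ in $\Aut(M)$. By Scott analysis on the countable structure $M$, the $\Aut(M)$-orbit of $(\aa, b)$ in $M^{n+1}$ is defined by a single formula $\varphi(\xx, y) \in \Lwow$. The stabilizer hypothesis forces $b$ to be the unique solution of $\varphi(\aa, y)$ in $M$, since any other solution would yield an automorphism moving $b$ while fixing $\aa$ pointwise; by $\Aut(M)$-invariance this uniqueness propagates to every tuple in the orbit, yielding $M \models \forall \xx\,\exists^{\leq 1} y\, \varphi(\xx,y)$ together with $M \models \exists \xx\, \exists y\,\bigl((\bigwedge_i y\neq x_i)\wedge\varphi(\xx,y)\bigr)$. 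Taking the countable fragment $F = \langle \varphi_M, \varphi\rangle$, since $\mu\modelsas \varphi_M$ we have $\Th_F(\mu) = \Th_F(M)$, so $\Th_F(\mu)$ has nontrivial dcl, contradicting Theorem~\ref{thm:trivialdcl}.

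\textbf{Backward direction.} For the harder converse I would follow a Petrov--Vershik-style sampling construction: build a Borel $L$-structure $\bbM$ with domain $[0,1]$ (equipped with Lebesgue measure) such that, for every finite partial isomorphism from a tuple $(\xi_1,\ldots,\xi_n) \in [0,1]^n$ into $M$ and every chosen extension $c \in M$, the set of $t \in [0,1]$ realizing the extended partial isomorphism $t\mapsto c$ has positive Lebesgue measure. The ergodic structure $\mu$ is then the distribution of the random $L$-structure on $\n$ obtained by sampling i.i.d.\ uniform $\xi_0, \xi_1,\ldots \in [0,1]$ and declaring $R(i_1,\ldots,i_k)$ to hold iff $\bbM \models R(\xi_{i_1},\ldots,\xi_{i_k})$. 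Exchangeability of the $\xi_i$ yields invariance under the logic action, and independence yields ergodicity via a Hewitt--Savage-type argument (cf.\ Remark~\ref{rem:ergodic}). The positivity condition on extension sets ensures that almost surely every existential witness required by a back-and-forth characterization of $\varphi_M$ is realized among the $\xi_i$, so the sampled structure is almost surely isomorphic to $M$.

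\textbf{Main obstacle.} The crux is arranging that the extension sets in $\bbM$ have positive Lebesgue measure at every stage of the back-and-forth construction — this is precisely where trivial group-theoretic dcl enters. If dcl were nontrivial, some extension set would be forced to be a singleton (or negligible), and the independent samples would miss it with probability one, so the sampled structure could not verify the full Scott sentence of $M$. Trivial group-theoretic dcl is exactly the combinatorial guarantee that the relevant orbit-extensions in $M$ are never pinned down to a single element, so at each stage one can partition $[0,1]$ into positive-measure cells indexed by these orbit-extensions and define the Borel relations of $\bbM$ coherently across the cells as the construction proceeds. This is the single-isomorphism-class precursor of the more delicate inverse-limit construction developed in \S\ref{sec:construction} for properly ergodic structures.
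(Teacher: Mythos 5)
This theorem is quoted from \cite{AFP}; the present paper does not prove it, so the only in-paper material to compare against is Theorem~\ref{thm:trivialdcl}, the remark following Definition~\ref{def:groupdcl} (Scott's theorem converts group-theoretic dcl for $M$ into syntactic dcl in a suitable countable fragment), and the sampling machinery of Section~\ref{sec:construction}. Your forward direction is correct and is essentially that argument: the $\Aut(M)$-orbit of $(\aa,b)$ is $\Lwow$-definable, the stabilizer hypothesis makes $b$ the unique solution over $\aa$ (and uniqueness propagates to every tuple in the orbit), and in the fragment generated by this formula the almost-sure theory of $\mu$ coincides with $\Th_F(M)$ because $\mu$ concentrates on the orbit of $M$, so it has nontrivial dcl, contradicting Theorem~\ref{thm:trivialdcl}.

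The backward direction, however, is a plan rather than a proof, and the gap sits exactly at the point you label the main obstacle. Trivial group-theoretic dcl gives you, for each finite configuration in $M$, \emph{at least two} realizations of each orbit-extension; it does not by itself let you ``partition $[0,1]$ into positive-measure cells indexed by these orbit-extensions and define the Borel relations coherently.'' Passing from ``never a unique realization'' to a Borel structure in which every required witness set has positive measure, simultaneously and coherently for all finite configurations, is the actual content of \cite{AFP} (following Petrov and Vershik \cite{PV}): one must iterate a duplication-of-elements argument infinitely often --- in this paper's language, the splitting steps of the inverse-limit construction in Section~\ref{sec:construction} --- so that in the limit each cell is an atomless set of positive measure, while preserving the relevant quantifier-free types along the system. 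A second, related omission: your sampling argument invokes witnesses ``required by a back-and-forth characterization of $\varphi_M$,'' but the Fubini/i.i.d.\ argument (as in Lemma~\ref{lem:strongwitnesses}) only handles quantifier-free matrices of pithy $\Pi_2$ axioms; to apply it to the Scott sentence one must first Morleyize into a $\Pi_2^-$ theory plus omitted types as in Theorem~\ref{thm:pithy} (or otherwise arrange that the Borel structure $\bbM$ respects the infinitary formulas of the Scott analysis). Neither step is automatic, and as written the construction of $\bbM$ is asserted rather than carried out.
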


\begin{remark}\label{rem:AFPergodic}
The method in \cite{AFP} of obtaining a measure via i.i.d.\ sampling from a Borel structure, which we use again in Section~\ref{sec:construction}, always produces an ergodic measure. This was mentioned in passing in \cite{AFP}, though not stated as part of the main theorem; for a proof, see \cite[Proposition 2.24]{AFKP}. See also Theorem~\ref{thm:ergodic} and Lemma~\ref{lem:borelsampling} below.
\end{remark}

It is a consequence of Scott's Theorem (Theorem~\ref{thm:scott}) that the notion of trivial group-theoretic dcl for a countable structure $M$ is equivalent to the usual (syntactic) trivial dcl for $\Th_{F_M}(M)$ in an appropriate countable fragment $F_M$ of $\Lwow$. That is, given a finite subset $A$ of $M$, an element $b\in M$ is fixed by all automorphisms fixing $A$ pointwise if and only if there is a formula from $F_M$ with parameters from $A$ which uniquely defines $b$ in $M$.

Unlike the group-theoretic notion of trivial dcl, which is defined for a given structure, the syntactic notion of trivial dcl (Definition~\ref{def:dcl}) is defined for theories in arbitrary countable fragments, and so is the relevant notion for this paper.

\subsection{\texorpdfstring{$\Pi_2^-$}{Pi\_2} sentences}\label{sec:pithy}

It is a well-known fact, originally due to Chang \cite[pp.~48--49]{ChangMR0234827}, that if $T$ is a theory in a countable fragment $F$ of $\Lwow$, then the models of $T$ are exactly the reducts to $L$ of the models of a countable first-order theory $T'$ in a larger countable language $L'\supseteq L$ that omit a countable set of types $Q$.

The idea is to Morleyize: we introduce a new relation symbol $R_\varphi$ for every formula $\varphi(\xx)$ in $F$ and encode the intended interpretations of the $R_\varphi$ in the theory $T'$. The role of the countable set of types $Q$ is to achieve this for infinitary conjunctions and disjunctions, which cannot be accounted for in first-order logic. 

There are two features of this construction that will be useful for us. First, it reduces $F$-types to quantifier-free types. Second, $T'$ can be axiomatized by 
$\Pi_1$ sentences together with
pithy $\Pi_2$ sentences (also called ``one point extension axioms'').

\begin{definition}\label{def:pithy}
A first-order sentence is \defn{pithy $\Pi_2$} if it has the form $\forall \xx\, \exists y\, \varphi(\xx,y)$, where $\varphi(\xx,y)$ is quantifier-free, $\xx$ is a tuple of variables (possibly empty), and $y$ is a single variable. We call a sentence $\Pi_2^-$ if it is either pithy $\Pi_2$ or is $\Pi_1$. A \defn{$\Pi_2^-$ theory} is a set of $\Pi_2^-$ sentences.
\end{definition}

Note that, in the context of this paper, all $\Pi_2^-$ theories are first-order.

\begin{theorem}\label{thm:pithy}
Let $F$ be a countable fragment of $\Lwow$ and $T$ an $F$-theory. Then there is a language $L'\supseteq L$, an $L'$-theory $T'$ that is $\Pi_2^-$, and a countable set of partial quantifier-free $L'$-types $Q$ such that the following hold.
\begin{enumerate}[(a)]
\item There is a bijection between formulas $\varphi(\xx)$ in $F$ and atomic $L'$-formulas $R_\varphi(\xx)$ which are not in $L$, such that if $M\models T'$ omits all the types in $Q$, then $M\models \forall \xx\, \varphi(\xx)\leftrightarrow R_\varphi(\xx)$. 
\item The reduct to $L$ is a bijection between the class of models of $T'$ omitting all the types in $Q$ and the class of models of $T$.
\end{enumerate}
\end{theorem}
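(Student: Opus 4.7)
\proofof{Theorem~\ref{thm:pithy} (sketch).}
The plan is to carry out a Morleyization tailored to $\Lwow$: introduce a fresh atomic symbol for every $F$-formula, axiomatize the ``obvious'' behavior using $\Pi_1$ and pithy $\Pi_2$ sentences wherever first-order machinery suffices, and push the remaining infinitary content (conjunctions and disjunctions of countably many conditions) into a countable list of types whose omission forces the biconditionals to hold. Concretely, I define $L' := L \cup \{R_\varphi : \varphi(\xx) \in F\}$, where $R_\varphi$ is a new relation symbol of arity $|\xx|$ (so the map $\varphi \mapsto R_\varphi$ is the bijection required in (a)); variants of the same $\varphi$ obtained by renaming free variables are identified with the corresponding renaming of $R_\varphi$, as allowed by substitution-closedness of $F$.

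Next, I construct $T'$ by scanning through the ways formulas of $F$ are built. For $\varphi$ an atomic $L$-formula, include the $\Pi_1$ axiom $\forall\xx\,(R_\varphi(\xx) \leftrightarrow \varphi(\xx))$. For $\psi = \lnot\varphi$ and $\psi = \varphi_1 \land \varphi_2$ (and similarly for finite disjunctions), include the $\Pi_1$ biconditional $\forall\xx\,(R_\psi(\xx) \leftrightarrow \cdots)$ with the quantifier-free matrix expressing the Boolean operation on the $R$-symbols. For $\psi(\xx) = \exists y\,\varphi(\xx,y)$, include the $\Pi_1$ axiom $\forall\xx\,\forall y\,(R_\varphi(\xx,y) \to R_\psi(\xx))$ together with the pithy $\Pi_2$ axiom
\[
\forall \xx\, \exists y\, \bigl(\lnot R_\psi(\xx) \lor R_\varphi(\xx,y)\bigr),
\]
which is the prenex form of $R_\psi(\xx) \to \exists y\, R_\varphi(\xx,y)$; dually for $\forall y\,\varphi(\xx,y)$. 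For a countable conjunction $\psi = \bigwedge_{i \in I} \varphi_i$, include only the easy direction: the $\Pi_1$ axioms $\forall\xx\,(R_\psi(\xx) \to R_{\varphi_i}(\xx))$ for each $i \in I$; put into $Q$ the partial quantifier-free type
\[
q_\psi(\xx) = \{R_{\varphi_i}(\xx) : i \in I\} \cup \{\lnot R_\psi(\xx)\},
\]
whose omission forces the converse. Dually for countable disjunctions, via the type $\{R_\psi(\xx)\} \cup \{\lnot R_{\varphi_i}(\xx) : i \in I\}$. Finally, for each sentence $\varphi \in T$, include the $\Pi_1$ axiom $R_\varphi$ (a propositional constant). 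Since $F$ is countable, both $T'$ and $Q$ are countable.

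To prove (a), fix $M \models T'$ omitting $Q$ and argue by induction on the complexity of $\varphi \in F$ that $M \models \forall\xx\,(R_\varphi(\xx) \leftrightarrow \varphi(\xx))$. The base and Boolean cases are immediate from the $\Pi_1$ axioms. The existential case uses both pieces of the $\exists$-axiom to get the biconditional. The countable conjunction case is where omission of $Q$ is essential: the $\Pi_1$ axioms give one direction, and the omission of $q_\psi$ says precisely that no tuple in $M$ can simultaneously satisfy all $R_{\varphi_i}$ while violating $R_\psi$, yielding the converse. The disjunction case is dual. For (b), one direction sends $N \models T$ to its canonical expansion $N^*$ with $R_\varphi^{N^*} := \{\aa : N \models \varphi(\aa)\}$; this expansion satisfies every axiom of $T'$ by construction (using that $\varphi \in T$ gives $R_\varphi^{N^*}$ true) and omits every type in $Q$ because inside $N^*$ the biconditional $R_\varphi \leftrightarrow \varphi$ holds for every $\varphi \in F$. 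The other direction sends $M \models T'$ omitting $Q$ to its reduct $M|_L$, which models $T$ because for each $\varphi \in T$ we have $M \models R_\varphi$ and (a) converts this back to $M|_L \models \varphi$. These maps are mutually inverse because (a) pins down the interpretation of each $R_\varphi$ from the reduct.

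The main obstacle is purely bookkeeping: making sure that every axiom really is $\Pi_1$ or pithy $\Pi_2$ (hence $\Pi_2^-$) rather than accidentally more complex, and that every infinitary conjunction or disjunction is captured by a \emph{quantifier-free} partial type that can be legitimately placed in $Q$. The rewrite $R_\psi(\xx) \to \exists y\, R_\varphi(\xx,y) \equiv \exists y\,(\lnot R_\psi(\xx) \lor R_\varphi(\xx,y))$ is the one nontrivial manipulation needed to get pithy $\Pi_2$ form; the rest follows the standard Morleyization template.
\Endproofof
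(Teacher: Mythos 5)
Your proposal is correct and follows essentially the same Morleyization strategy as the paper: the same $L'$, the same one-directional axioms plus omitted quantifier-free types for infinitary conjunctions/disjunctions, the same $T' = T_{\textrm{def}} \cup \{R_\varphi : \varphi \in T\}$, and the same induction plus canonical-expansion/reduct argument for (a) and (b). Your only deviation is presentational: you explicitly split each quantifier biconditional into a $\Pi_1$ half and a pithy $\Pi_2$ half, where the paper simply asserts those axioms are $\Pi_2^-$ after prenexing.
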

\begin{proof}
Let $L' = L\cup \{R_{\varphi}\mid \varphi(\xx)\in F\}$, where the arity of the relation symbol $R_\varphi$ is the length of the tuple $\xx$. By convention, we allow $0$-ary relation symbols (i.e., propositional symbols). Thus, we include a $0$-ary relation $R_\psi$ for every sentence $\psi\in F$. 

Let $T_\tdef$ be the theory consisting of the following axioms, for each formula $\varphi(\xx)\in F$:
\begin{enumerate}
	\item $\forall \xx\, \bigl(R_\varphi(\xx) \leftrightarrow \varphi(\xx)\bigr)$, if $\varphi(\xx)$ is atomic.
	\item $\forall \xx\, \bigl(R_{\varphi}(\xx) \leftrightarrow \lnot R_{\psi}(\xx)\bigr)$, if $\varphi$ is of the form $\lnot \psi(\xx)$.
	\item $\forall \xx\, \bigl(R_{\varphi}(\xx) \leftrightarrow R_{\psi}(\xx) \land R_{\theta}(\xx) \bigr)$, if $\varphi$ is of the form $\psi(\xx)\land \theta(\xx)$.
	\item $\forall \xx\, \bigl(R_{\varphi}(\xx) \leftrightarrow R_{\psi}(\xx) \lor R_{\theta}(\xx)\bigr)$, if $\varphi$ is of the form $\psi(\xx)\lor \theta(\xx)$.
	\item $\forall \xx\, \bigl(R_{\varphi}(\xx) \rightarrow R_{\psi_i}(\xx)\bigr)$ for all $i\in I$, if $\varphi$ is of the form $\doublewedge_{i\in I} \psi_i(\xx)$.
	\item $\forall \xx\, \bigl(R_{\psi_i}(\xx) \rightarrow R_{\varphi}(\xx)\bigr)$ for all $i \in I$, if $\varphi$ is of the form $\doublevee_{i\in I}\psi_i(\xx)$.
	\item $\forall \xx\, \bigl(R_{\varphi}(\xx) \leftrightarrow \forall y\, R_{\psi}(\xx,y)\bigr)$, if $\varphi$ is of the form $\forall y\, \psi(\xx,y)$.
	\item $\forall \xx\, \bigl(R_{\varphi}(\xx) \leftrightarrow \exists y\, R_{\psi}(\xx,y)\bigr)$, if $\varphi$ is of the form $\exists y\, \psi(\xx,y)$.
\end{enumerate}
Note that all the axioms of $T_{\tdef}$ are first-order and universal except for those of type (7) and (8), which are $\Pi_2^-$ when put in prenex normal form.

The axioms of type (5) and (6) cannot be made into bi-implications, since arbitrary countable infinite conjunctions and disjunctions are not expressible in first-order logic. To ensure that the corresponding $R_{\varphi}$ have their intended interpretation, we let $Q$ consist of the partial quantifier-free types:
\begin{itemize}
	\item[\emph{(i)}] $q_\varphi(\xx) = \{R_{\psi_i}(\xx)\mid i\in I\} \cup \{\lnot R_{\varphi}(\xx)\}$, for all $\varphi(\xx)$ of the form $\doublewedge_{i\in I}\psi_i(\xx)$
	\item[\emph{(ii)}] $q_\varphi(\xx) = \{\lnot R_{\psi_i}(\xx)\mid i\in I\} \cup \{R_{\varphi}(\xx)\}$, for all $\varphi(\xx)$ of the form $\doublevee_{i\in I}\psi_i(\xx)$.
\end{itemize}

It is now straightforward to show by induction on the complexity of formulas that if a model $M\models T_{\tdef}$ omits every type in $Q$, then for all $\varphi(\xx)$ in $F$ and all $\aa$ from $M$, we have $M\models \varphi(\aa)$ if and only if $M\models R_{\varphi}(\aa)$. This establishes \emph{(a)}. It also implies that every $L$-structure $N$ admits a unique expansion to an $L'$-structure $N'$ which is a model of $T_\tdef$ and omits every type in $Q$.
As a consequence, if we set $T' = T_\tdef\cup \{R_\psi\mid \psi\in T\}$, then the following hold.
\begin{itemize}
\item If $M$ is a model of $T'$ which omits every type in $Q$, then the reduct $M\restriction L$ is a model of $T$. 
\item If $N\models T$, then the canonical expansion $N'$ of $N$ is a model of $T'$.
\item If $M \models T'$ then $(M\restriction L)' = M$.
\item If $N \models T$ then $N'\restriction L = N$.
\end{itemize}
This establishes \emph{(b)}.
\end{proof}

Recall that an ergodic $L$-structure is an ergodic invariant measure on $\Str_L$.

\begin{corollary}\label{cor:measurebijection}
There is a bijection between the invariant measures on $\Str_L$
which almost surely satisfy $T$ and the invariant measures on $\Str_{L'}$
which almost surely satisfy $T'$ and omit all the types in $Q$.
This bijection sends ergodic structures to ergodic structures and properly ergodic structures to properly ergodic structures.
\end{corollary}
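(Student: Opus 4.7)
The plan is to lift the structure-level bijection from Theorem~\ref{thm:pithy} to the measure level by pushforward. I would introduce two $\sym$-equivariant maps between the spaces of structures: the \emph{reduct map} $r \colon \Str_{L'} \to \Str_L$, $M \mapsto M \restriction L$, which is continuous (it is a coordinate projection of Cantor spaces), and the \emph{canonical expansion map} $e \colon \Str_L \to \Str_{L'}$, $N \mapsto N'$, where $N'$ is the unique expansion to $L'$ produced in the proof of Theorem~\ref{thm:pithy}. The map $e$ is Borel measurable, since for each $\varphi(\xx) \in F$ and tuple $\aa$ the preimage of the basic clopen set $\extent{R_\varphi(\aa)}$ is the Borel set $\extent{\varphi(\aa)}$ in $\Str_L$. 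Equivariance of both maps is immediate, since the canonical expansion is defined in a way that does not refer to the labeling of the domain.

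Set $X \defas \{M \in \Str_{L'} : M \models T' \text{ and omits every type in } Q\}$; this is Borel, and Theorem~\ref{thm:pithy} yields $r \circ e = \mathrm{id}_{\Str_L}$ together with $e \circ r = \mathrm{id}_X$, with $e$ mapping $\{N \in \Str_L : N \models T\}$ bijectively onto $X$. The candidate bijection on measures then sends an invariant $\mu$ on $\Str_L$ with $\mu \models T$ a.s.\ to $e_*\mu$, and an invariant $\nu$ on $\Str_{L'}$ with $\nu(X) = 1$ to $r_*\nu$. Equivariance of $r$ and $e$ transports invariance across the pushforwards, and $e(\{N : N \models T\}) \subseteq X$ ensures that $e_*\mu$ is almost surely on $X$, so that it almost surely satisfies $T'$ and omits every type in $Q$. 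Mutual inverseness of these maps on measures is immediate from $r \circ e = \mathrm{id}$ and $e \circ r = \mathrm{id}_X$ together with $\nu(X) = 1$.

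It remains to check that ergodicity and proper ergodicity are preserved in both directions. For ergodicity going forward, given an almost surely $e_*\mu$-invariant Borel $Y \subseteq \Str_{L'}$, equivariance of $e$ makes $e^{-1}(Y)$ almost surely $\mu$-invariant with equal measure. Going backward, for an almost surely $\mu$-invariant Borel $Z \subseteq \Str_L$, I use $r \circ e = \mathrm{id}$ to check that $r^{-1}(Z)$ is almost surely $e_*\mu$-invariant, again with matching measure. For proper ergodicity, the key observation is that the canonical expansion is functorial for isomorphism: $N_1 \cong_L N_2$ iff $e(N_1) \cong_{L'} e(N_2)$. Hence orbits of models of $T$ in $\Str_L$ correspond bijectively with orbits sitting inside $X$, so $\mu$ concentrates on a single orbit iff $e_*\mu$ does.

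There is no serious obstacle; the corollary is essentially a formal consequence of the structural bijection of Theorem~\ref{thm:pithy}. The only mildly nontrivial conceptual point is the Borel measurability of $e$, which ultimately rests on the fact that every $\Lwow$-formula has Borel extension in $\Str_L$.
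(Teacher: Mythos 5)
Your proposal is correct and takes essentially the same approach as the paper: the paper also works with the reduct map restricted to the subspace of models of $T'$ omitting $Q$, notes that it is a Borel isomorphism onto the models of $T$ (your map $e$ being precisely its inverse there), and then pushes measures forward, observing that $\sym$-equivariance carries over invariance, ergodicity, and proper ergodicity. You spell out the measurability of $e$ via preimages of the generating clopen sets and the transfer of ergodicity in more detail than the paper, which simply asserts these, but the underlying argument is the same.
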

\begin{proof}
The reduct $\restriction_L$ is a continuous map $\Str_{L'} \rightarrow \Str_L$, since the preimages of clopen sets in $\Str_L$ are also clopen sets in $\Str_{L'}$. By Theorem~\ref{thm:pithy}, $\restriction_L$ is a bijection between the subspace $X'$ of $\Str_{L'}$ consisting of models of $T'$ which omit all the types in $Q$ and the subspace $X$ of $\Str_L$ consisting of models of $T$. Upon restricting to these subspaces, the inverse of $\restriction_L$ is a Borel map, since the image of a clopen set in $X'$ (described by a quantifier-free formula) is a Borel set in $X$ (described by a formula of $\Lwow$). Hence $\restriction_L$ is a Borel isomorphism between these subspaces, and it induces a bijection between the set of probability measures on $\Str_{L'}$ concentrating on $X'$ and the set of probability measures on $\Str_L$ concentrating on $X$. Moreover, $\restriction_L$ preserves the logic action, so the induced bijection on measures preserves invariance, ergodicity, and proper ergodicity. 
\end{proof}

\subsection{The Aldous--Hoover--Kallenberg theorem and representations}\label{sec:aldous-hoover}

In this section, we state a version of the Aldous--Hoover--Kallenberg theorem. This theorem, which is a generalization of de~Finetti's theorem to exchangeable arrays of random variables, was discovered independently by Aldous \cite{Aldous} and Hoover \cite{Hoover}, and further developed by Kallenberg \cite{MR1182678} and others. For proofs, we direct the reader to Kallenberg's book \cite[Chapter~7]{MR2161313}. See \cite[\S2.5]{Ackerman} for a discussion of how to translate from the purely probabilistic statements in Kallenberg to the setting here, involving spaces of quantifier-free types. 
The survey by Austin \cite{Austin} provides details on its application to random structures.

We denote by $[n]$ the set $\{0,\dots,n-1\}$, by $\n^{[n]}$ the set of $n$-tuples of \emph{distinct} elements of $\n$ (that is, injective functions $[n]\to \n$), and by $\mathcal{P}_\fin(\n)$ the set of all finite subsets of $\n$. Given a tuple $\aa\in \n^{[n]}$, we denote by $\enum{\aa}$ the set in $\mathcal{P}_\fin(\n)$ enumerated by $\aa$.

Let $S^n_\qf(L)$ be the Stone space of quantifier-free $n$-types. Its points are the complete quantifier-free types in the variables $x_0,\dots,x_{n-1}$, and its topology is generated by the clopen sets $\extent{\varphi(\xx)} = \{p(\xx)\in S^n_\qf(L)\mid \varphi\in p\}$ for all quantifier-free formulas $\varphi$. Note that $S^n_\qf(L)$ admits an action of the symmetric group $\sympar{n}$ (the permutation group of $[n]$), by $\sigma(p(x_0,\dots,x_{n-1})) = p(x_{\sigma(0)},\dots,x_{\sigma(n-1)})$ for $\sigma \in \sympar{n}$. We write $S^{[n]}_{\qf}(L)$ for the $\sympar{n}$-invariant subspace of non-redundant quantifier-free types, namely those which contain $x_i\neq x_j$ for all $i\neq j$.

We let $(\xi_A)_{A\in \mathcal{P}_\fin(\n)}$ be a collection of independent random variables, each uniformly distributed on $[0,1]$. We think of $\xi_A$ as a source of randomness sitting on the subset $A$, which we will use to build a random $L$-structure with domain $\n$. If $\aa\in \n^{[n]}$, the injective function $i\colon [n]\to \n$ enumerating $\aa$ associates to each $X\in \mathcal{P}([n])$ a subset $i[X]\subseteq \enum{\aa}$. We denote by $\widehat{\xi}_{\aa}$ the family of random variables $(\xi_{i[X]})_{X\in \mathcal{P}([n])}$.

\begin{definition}
An \defn{AHK system} is a collection of measurable functions 
\[
	(f_n\colon [0,1]^{\mathcal{P}([n])} \to S^{[n]}_{\qf}(L))_{n\in \n}
\]
satisfying the coherence conditions:
\begin{itemize}
\item For all $\sigma\in \sympar{n}$, almost surely $$f_n((\xi_{\sigma[X]})_{X\subseteq [n]}) = \sigma(f_n((\xi_X)_{X\subseteq [n]})).$$
\item For all $0\leq m \leq n$, almost surely $$f_m((\xi_X)_{X\subseteq [m]}) \subseteq f_n((\xi_Y)_{Y\subseteq [n]}).$$
\end{itemize}
\end{definition}

That is, $f_n$ takes as input a collection of values in $[0,1]$, indexed by $\mathcal{P}([n])$, and produces a non-redundant quantifier-free $n$-type. Using our random variables $\xi_A$, we have a natural notion of \defn{sampling} from an AHK system to obtain a non-redundant quantifier-free type $r_{\aa} = f_n(\widehat{\xi}_{\aa})$ for every finite tuple $\aa$ from $\n$. Note that the order in which $\enum{\aa}$ is enumerated by the tuple $\aa$ is significant, since $f_n$ is, in general, not symmetric in its arguments.

The coherence conditions ensure that the quantifier-free types obtained from the function $f_n$ cohere (almost surely), allowing us to define the random structure $\fM$ obtained by \emph{sampling} from the AHK system $(f_n)$. Namely, 
for every tuple $\aa\in\Nats$, 
\[
	\fM \models R(\aa) \qquad \text{if and only if} \qquad R(\xx)\in f_n(\widehat{\xi}_{\aa}),
\]
where $n$ is the length of $\aa$.

One may also directly describe the measure on $\Str_L$ which is the distribution of the random structure $\fM$: an AHK system $(f_n)_{n\in\Nats}$ gives rise to a well-defined finitely-additive probability measure $\mu^*$ on the Boolean algebra $\mathcal{B}^*$ of clopen sets in $\Str_L$, defined by 
\[
	\mu^*(\extent{\varphi(\aa)}) = \lambda^{\mathcal{P}([n])}(f_n^{-1}[\extent{\varphi(\xx)}]),
\]
where $\lambda^{\mathcal{P}([n])}$ is the uniform product measure on $[0,1]^{\mathcal{P}([n])}$. This is the probability that $\varphi(\xx)\in r_{\aa}$, whenever $\aa$ is a tuple of $n$ distinct elements. The coherence conditions imply that this is well-defined: the first ensures that the order in which we list the variables in $\varphi(\xx)$ is irrelevant, and the second ensures that the measure is independent of the variable context $\xx$. 

Since the value of $\mu^*(\extent{\varphi(\aa)})$ does not depend on the choice of tuple $\aa$ of distinct elements, $\mu^*$ is manifestly invariant under the logic action. By Proposition~\ref{prop:stonemeasure}, $\mu^*$ induces a unique invariant Borel probability measure $\mu$ on $\Str_L$. In this case, we say that $(f_n)_{n\in\Nats}$ is an \defn{AHK representation of} $\mu$.

\begin{theorem}[Aldous--Hoover--Kallenberg]
\label{thm:aldous-hoover}
Every invariant probability measure $\mu$ on $\Str_L$ has an AHK representation. 
\end{theorem}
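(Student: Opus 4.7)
The plan is to reduce Theorem~\ref{thm:aldous-hoover} to the standard-Borel-valued version of the classical probabilistic Aldous--Hoover--Kallenberg theorem, applied to the array of quantifier-free types coming from a $\mu$-random structure. Concretely, sample $\fM \sim \mu$ and, for each $n$ and each $\aa \in \n^{[n]}$, let $r_{\aa} = \qftp^{\fM}(\aa) \in S^{[n]}_{\qf}(L)$. Invariance of $\mu$ under the logic action is equivalent to the statement that, for every fixed $n$, the array $(r_{\aa})_{\aa \in \n^{[n]}}$ is jointly exchangeable --- its joint distribution is preserved by the diagonal action of $\sym$ on tuples. Moreover the family is automatically coherent across $n$: restriction to a sub-tuple commutes with $\qftp^{\fM}$, so the whole system $(r_{\aa})_{n,\aa}$ is a single exchangeable object taking values in the standard Borel space $\bigsqcup_n S^{[n]}_{\qf}(L)$.

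Applying the Polish-space-valued AHK theorem for jointly exchangeable arrays (see \cite[Chapter~7]{MR2161313}) yields, for each $n$, a measurable function $f_n \colon [0,1]^{\mathcal{P}([n])} \to S^{[n]}_{\qf}(L)$ such that the sampled array $(f_n(\widehat{\xi}_{\aa}))_{\aa \in \n^{[n]}}$ has the same joint distribution as $(r_{\aa})_{\aa}$. The first coherence axiom, $\sympar{n}$-equivariance, follows by a measurable-selection argument: for each $\sigma \in \sympar{n}$, the random variables $f_n((\xi_{\sigma[X]})_X)$ and $\sigma(f_n((\xi_X)_X))$ already agree in distribution, so one passes to a representative of $f_n$ equivariant on a co-null set. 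The second coherence axiom, compatibility with restriction from $[n]$ to $[m]$, is more delicate and is best handled by building the $f_n$ \emph{jointly and inductively}: having fixed $f_m$ for all $m < n$, realize the conditional distribution of $r_{[n]}$ given the types of all proper sub-tuples as a measurable function of a single fresh uniform random variable indexed by $[n]$ itself (possible because $[0,1]$ is an uncountable standard Borel space), and define $f_n$ accordingly.

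Once a coherent system $(f_n)$ is in hand, the sampling construction from Section~\ref{sec:aldous-hoover} produces a finitely-additive invariant measure $\mu^*$ on the clopen algebra $\mathcal{B}^*$ of $\Str_L$ whose values on each $\extent{\varphi(\aa)}$ match those of $\mu$ by construction; Proposition~\ref{prop:stonemeasure} then extends this agreement to all Borel sets, identifying $(f_n)$ as an AHK representation of $\mu$. The main obstacle is really the Polish-space-valued AHK theorem itself, together with the joint measurable-selection argument needed to arrange coherence almost surely rather than merely in distribution. This is the technical heart of \cite[Chapter~7]{MR2161313}, where it is carried out by a reverse-martingale / backward-filtration argument that exploits the conditional-independence (dissociability) structure which exchangeability forces on the tail $\sigma$-algebras indexed by disjoint subsets of $\n$.
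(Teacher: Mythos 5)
Your proposal matches the paper's own treatment: the paper offers no independent proof, observing only that after translating between invariant measures on $\Str_L$ and jointly exchangeable arrays of quantifier-free types (the translation you sketch, carried out in detail in \cite[\S 2.5]{Ackerman}), the statement is exactly \cite[Theorem~7.22]{MR2161313}, whose reverse-martingale proof you likewise defer to. So the approach is essentially the same; your remarks about upgrading distributional agreement to almost-sure coherence are precisely the technical content that this translation and Kallenberg's almost-sure form of the representation take care of.
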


Once a proper translation of notation is applied, Theorem~\ref{thm:aldous-hoover} is equivalent to {\cite[Theorem~7.22]{MR2161313}}, which is usually called the Aldous--Hoover--Kallenberg theorem. For more on such a translation see \cite[\S2.5]{Ackerman}.

The AHK representation produced by Theorem~\ref{thm:aldous-hoover} is not unique, but it is unique up to certain appropriately measure-preserving transformations. See \cite[Theorem 7.28]{MR2161313} for a precise statement.

The key fact to observe about AHK systems is that if $\aa$ and $\bb$ are tuples from $\n$ whose intersection $\enum{\aa} \cap \enum{\bb}$ is enumerated by the tuple $\cc$, then the random quantifier-free types $r_{\aa}$ and $r_{\bb}$ are conditionally independent over $\widehat{\xi}_{\cc}$. If $\aa$ and $\bb$ are disjoint, then $\widehat{\xi}_{\cc} = \xi_\emptyset$.

The Aldous--Hoover--Kallenberg theorem also provides a characterization of the ergodic measures among the invariant measures on $\Str_L$: they are those measures for which the random quantifier-free types $r_{\aa}$ and $r_{\bb}$ are independent when $\aa$ and $\bb$ are disjoint. Formally, for an $n$-tuple $\aa$ from $\n$, let $\Sigma_{\aa}$ be the $\sigma$-algebra on $\Str_L$ generated by the sets $\extent{\varphi(\aa)}$, where $\varphi(\xx)$ ranges over the quantifier-free formulas in the $n$-tuple of variables $\xx$. We say that an invariant probability measure $\mu$ on $\Str_L$ is \defn{dissociated} if whenever $\aa$ and $\bb$ are disjoint tuples from $\n$, the $\sigma$-algebras $\Sigma_{\aa}$ and $\Sigma_{\bb}$ are independent (see Remark~\ref{rem:ergodic} above).

\begin{theorem}[{\cite[Lemma 7.35]{MR2161313}}]\label{thm:ergodic}
Let $\mu$ be an invariant probability measure on $\Str_L$. The following are equivalent:
\begin{enumerate}
\item $\mu$ is ergodic.
\item $\mu$ is dissociated.
\item $\mu$ has an AHK representation in which the functions $f_n$ do not depend on the argument indexed by $\emptyset$.
\end{enumerate}
\end{theorem}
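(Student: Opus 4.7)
The plan is to prove the cycle $(3)\Rightarrow(2)\Rightarrow(1)\Rightarrow(3)$. The implication $(3)\Rightarrow(2)$ is the easiest: if the $f_n$ in an AHK representation do not depend on the argument indexed by $\emptyset$, then for disjoint tuples $\aa,\bb$ the random quantifier-free types $r_{\aa}$ and $r_{\bb}$ are measurable functions of the families $(\xi_A)_{\emptyset\neq A\subseteq \enum{\aa}}$ and $(\xi_B)_{\emptyset\neq B\subseteq \enum{\bb}}$ respectively. The key point is that $\emptyset$ is no longer a common index, and since $\enum{\aa}\cap\enum{\bb}=\emptyset$, these two families are disjoint; by independence of the $\xi_A$'s, the $\sigma$-algebras $\Sigma_{\aa}$ and $\Sigma_{\bb}$ are independent.

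For $(2)\Rightarrow(1)$, suppose $X$ is almost surely invariant with $\mu(X) > 0$; I will show $\mu(X) = \mu(X)^2$. Approximate $X$ by a clopen set $X_\epsilon$ with $\mu(X\triangle X_\epsilon) < \epsilon$; since clopen sets depend on only finitely many coordinates, $X_\epsilon \in \Sigma_{\aa}$ for some tuple $\aa$. Choose $\sigma\in \sym$ with $\sigma[\enum{\aa}]$ disjoint from $\enum{\aa}$, so that $\sigma[X_\epsilon]\in \Sigma_{\sigma(\aa)}$. By dissociation and invariance, $\mu(X_\epsilon \cap \sigma[X_\epsilon]) = \mu(X_\epsilon)\mu(\sigma[X_\epsilon]) = \mu(X_\epsilon)^2$; on the other hand, almost-sure invariance of $X$ gives $\mu(X\cap \sigma[X]) = \mu(X)$, and the two intersections differ in measure by at most $2\epsilon$. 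Letting $\epsilon\to 0$ yields $\mu(X) = \mu(X)^2$, so $\mu(X)\in\{0,1\}$.

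For $(1)\Rightarrow(3)$, which is the substantive direction, start with any AHK representation $(f_n)$ of $\mu$, which exists by Theorem~\ref{thm:aldous-hoover}. For each $t\in [0,1]$, form the system $(f_n^t)$ obtained by fixing $\xi_\emptyset = t$ and ignoring the $\emptyset$-indexed input; by Fubini the coherence conditions for $(f_n^t)$ hold for almost every $t$. For such $t$, the system $(f_n^t)$ represents some invariant measure $\mu_t$, and since $(f_n^t)$ has no dependence on the $\emptyset$-indexed input, $\mu_t$ is dissociated by $(3)\Rightarrow(2)$, hence ergodic by $(2)\Rightarrow(1)$. Fubini also gives the mixture formula $\mu = \int_0^1 \mu_t\,dt$. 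Since the ergodic invariant measures form the extreme points of the simplex of invariant measures on $\Str_L$ (\cite[Lemma~A1.2 and Theorem~A1.3]{MR2161313}), the ergodic decomposition of the already-ergodic $\mu$ is trivial; hence $\mu_t = \mu$ for a.e.\ $t$, and for any such $t$ the system $(f_n^t)$ is the desired representation. The main obstacle lies in making this last step fully rigorous: one must verify (via Fubini and the measurability of sampling) that the map $t\mapsto \mu_t$ is measurable, that $\mu = \int \mu_t\,dt$ holds as an integral of invariant measures, and that the uniqueness of ergodic decomposition from Choquet theory applies in this non-compact-group setting.
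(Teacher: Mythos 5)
Your argument is correct in outline, but note that it takes a different route from the paper for the simple reason that the paper does not prove this theorem at all: it is quoted from Kallenberg (Lemma~7.35 of \cite{MR2161313}), with the remark that the finite-language statement there extends to countable relational languages by modifying the proofs (citing \cite[Corollary~2.18]{Ackerman}). Your cycle $(3)\Rightarrow(2)\Rightarrow(1)\Rightarrow(3)$ is sound: $(3)\Rightarrow(2)$ is the disjoint-index-sets independence observation; $(2)\Rightarrow(1)$ is the standard clopen-approximation argument (a Hewitt--Savage-style zero-one law), and it works here because the clopen algebra generates the Borel $\sigma$-algebra and each clopen set lies in some $\Sigma_{\aa}$; and $(1)\Rightarrow(3)$ by conditioning on $\xi_\emptyset$ is exactly the right idea and is essentially how the $\emptyset$-variable is interpreted as the ergodic decomposition. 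One useful simplification for the step you flag as the main obstacle: you do not need uniqueness of the ergodic decomposition or any Choquet theory, only the elementary fact that an ergodic invariant measure is an extreme point of the invariant measures (the Radon--Nikodym density of an absolutely continuous invariant component is an almost surely invariant function, hence constant). Granting measurability of $t\mapsto\mu_t$ on the countable clopen algebra (Fubini--Tonelli, as you say) and the mixture identity $\mu=\int_0^1\mu_t\,dt$ (a $\pi$--$\lambda$ extension from clopen sets), if $\mu_t\neq\mu$ on a set of $t$ of positive Lebesgue measure, then for some clopen $C$ the set $S=\{t:\mu_t(C)<\mu(C)\}$ has measure strictly between $0$ and $1$, and writing $\mu$ as the convex combination of the normalized integrals over $S$ and its complement exhibits $\mu$ as a nontrivial mixture of two distinct invariant measures, contradicting extremality. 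This closes the gap with only the extremality fact, which the paper already invokes (via \cite[Lemma~A1.2 and Theorem~A1.3]{MR2161313}); what your approach buys is a self-contained proof adapted directly to $\Str_L$ with a countable language, rather than an appeal to Kallenberg's finite-language proof plus its generalization.
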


The result \cite[Lemma 7.35]{MR2161313} is stated for finite relational languages, but can be generalized to our setting by a careful modification of the proofs, as described in \cite[Corollary 2.18]{Ackerman}.

\section{Examples}\label{sec:examples}

In this section, we describe some examples of properly ergodic structures and their theories, as well as theories all of whose ergodic models are not properly ergodic. In doing so,
we highlight some of the key notions of the paper, including trivial definable closure and rootedness, and the relevance of infinitary logic. Certain examples are naturally described using infinite languages, but for some we also describe how
they may be framed in terms of finite languages.

When we say that we pick a random element $A\in 2^\n$, we always refer to the uniform (Lebesgue) measure on $2^\n$, the infinite product of the $\Bernoulli(\onehalf)$ measure on $2 = \{0,1\}$. We identify such an $A\in 2^\n$ with both a subset of $\n$ and an infinite binary sequence.

Our first example of a class of properly ergodic structures arose naturally in the study of random graphs.

\begin{example}[Random geometric graphs]\label{ex:geometric}
Consider a metric space $(X,d)$, a probability measure $m$ on $X$, and a real number $p \in (0,1)$.  Bonato and Janssen \cite{GeometricGraphs} define the random geometric graph given by first sampling an $m$-i.i.d.\ sequence of vertices $D \subseteq X$ and then connecting two points $x,y\in D$ such that $d(x,y) < 1$ by an edge or not based on an independent weight-$p$ coin flip. The distribution of this random construction is an ergodic structure.  

Bonato and Janssen showed that when the metric space is $\ell_\infty^n$ for some $n$, the random geometric graph is almost surely isomorphic to a single countable graph, but that on the other hand, the Euclidean plane yields a properly ergodic structure.  In fact, as shown in \cite{GeometricGraphs2}, every normed linear space other than $\ell_\infty^n$ yields a properly ergodic structure.
\end{example}

The next class of examples, which was introduced in \cite[\S5.1]{AFNP}, can be thought of as countably many overlaid instances of the \ER\ random (hyper-)graphs. These are some of the key examples of properly ergodic structures, based on which we also will build several variants.

\begin{example}[Kaleidoscope structures]\label{ex:kaleidoscopes} 
	We begin by describing the case of binary relations.
Let $L = \{R_n\mid n\in\n\}$, where each $R_n$ is a binary relation symbol. The interpretation of each $R_n$ will be irreflexive and symmetric; one may think of each $R_n$ as a different ``color'' of edge.

	Consider the random $L$-structure with domain $\n$ obtained by first picking a random $A_{\{i,j\}}\in 2^\n$ independently for each pair of distinct elements $i,j \in \n$, 
	and then setting $iR_n j$ just when $n\in A_{\{i,j\}}$. 
	Let $\mu$ be the distribution of this random structure.

	Observe that
	the measure $\mu$ is invariant, since the random quantifier-free type of a tuple of distinct elements does not depend on the choice of tuple. Further, $\mu$ is ergodic by Theorem~\ref{thm:ergodic}, since the random quantifier-free types of disjoint tuples are independent.
	We call this ergodic structure $\mu$ the \defn{kaleidoscope random graph}. (Note that in \cite{AFNP}, this term is used instead to refer to models of its almost-sure first-order theory.)

	Note that there are continuum-many quantifier-free $2$-types consistent with $\Th_{\FO}(\mu)$,
	each of which is realized with probability $0$ in $\mu$. Any particular countable $L$-structure realizes at most countably many such types, and so $\mu$ assigns measure $0$ to its isomorphism class. Hence $\mu$ is properly ergodic.

In fact, for every $A\in 2^\n$, the theory $\Th(\mu)$ contains the sentence $$\lnot \exists x\, \exists y\, \Bigl(\bigdoublewedge_{n\in A} xR_n y\land \bigdoublewedge_{n\notin A} \lnot xR_n y\Bigr).$$
	Since all quantifier-free $2$-types consistent with $\Th_{\FO}(\mu)$
	are ruled out by $\Th(\mu)$, the theory $\Th(\mu)$ has no models of any cardinality. 
In fact, the 	
	complete $\Lwow$-theory  of any properly ergodic structure has no models of any cardinality, as shown in Corollary~\ref{cor:nomodels}.
	Note, however, that any countable fragment $F$ of $\Lwow$ only contains countably many of the sentences above, so $\Th_F(\mu)$ only rules out countably many of the quantifier-free $2$-types. 

	Restricting to the first-order fragment, the theory $\Th_{\FO}(\mu)$ has several nice properties. It is the model companion of the universal theory asserting that each $R_n$ is irreflexive and symmetric. It can be axiomatized by extension axioms, analogous to those in the theory of the Rado graph: in each finite sublanguage $L^*\subseteq L$, for every finite tuple $A$ and non-redundant quantifier-free $1$-type over $A$ in the language $L^*$ consistent with $\Th_{\FO}(\mu)$, there is some element $b$ satisfying that quantifier-free type.
	The reduct of $\Th_{\FO}(\mu)$ to any finite sublanguage is countably categorical, but $\Th_{\FO}(\mu)$ has continuum-many countable models (since there are continuum-many quantifier-free $2$-types consistent with $\Th_{\FO}(\mu)$).
	In fact, for all countable fragments $F$ and properly ergodic structures $\mu$, the theory $\Th_F(\mu)$ has continuum-many countable models,
	as we also show in Corollary~\ref{cor:nomodels}.

For arbitrary arity $k \ge 1$,
	an analogous construction
	produces the \defn{kaleidoscope random $k$-uniform hypergraph}.
	We call the case $k = 1$ the \defn{kaleidoscope random predicate}.
\end{example}

We now use the latter example to illustrate the distinction between group-theoretic and syntactic definable closure.

\begin{example}[The theory of the kaleidoscope random predicate]\label{ex:trivialdcl}
Let $T$ be the first-order theory of the kaleidoscope random predicate (see Example~\ref{ex:kaleidoscopes}) in the language $\{P_n\mid n\in \n\}$. The theory $T$ says that for every $m\in \n$ and every subset $A\subseteq [m]$, there is an element $x$ such that for all $n\in [m]$, the relation $P_n(x)$ holds if and only if $n\in A$.

Now let $T'$ be $T$ together with the infinitary sentence
\[
	\forall x\, \forall y\, (\bigdoublewedge_{n\in \n} (P_n(x)\leftrightarrow P_n(y))\rightarrow x = y).
\]

The kaleidoscope random predicate almost surely satisfies $T'$. Each of the continuum-many quantifier-free $1$-types is realized with probability $0$, and since the quantifier-free $1$-types of distinct elements of $\n$ are independent, almost surely no $1$-type is realized more than once.

In a model $M$ of $T'$, no two elements have the same quantifier-free $1$-type. Hence $\Aut(M)$ is the trivial group, and $M$ has non-trivial group-theoretic dcl. Let $F$ be the countable fragment of $\Lwow$ generated by $T'$. Then $F$ does not contain the conjunctions of the form $\doublewedge_{n\in A} P_n(x)\land \doublewedge_{n\notin A} \lnot P_n(x)$ for $A\subseteq \n$ needed to pin down elements uniquely. In fact, the complete $F$-theory of the kaleidoscope random predicate (which extends $T'$) has trivial dcl, by Theorem~\ref{thm:trivialdcl}.
\end{example}

We will see that the presence of a formula $\chi(\xx)$ of positive measure, such that every type containing $\chi$ has probability $0$ of being realized, is a characteristic feature of properly ergodic structures.

In the kaleidoscope random graph (Example~\ref{ex:kaleidoscopes}), $x\neq y$ is such a formula $\chi(x,y)$, since every non-redundant quantifier-free $2$-type is realized with probability $0$. In contrast to the kaleidoscope random graph, Example~\ref{ex:maxgraph} shows that there are properly ergodic structures in which these $0$-probability types have infinitely many realizations if they are realized at all. 

On the other hand, in Example~\ref{ex:nonexample}, we describe a transformation (known as the ``blow-up''), which when applied to the Kaleidoscope random predicate, leads to each of the continuum-many $1$-types being realized infinitely many times (if at all), and yet whose resulting theory has no properly ergodic models. This shows that merely having continuum-many types in a theory with trivial dcl does not imply the existence of a properly ergodic model of the theory. 

These phenomena motivate the definition of rootedness in Section~\ref{sec:rootedness}.

\begin{example}[The max random graph]\label{ex:maxgraph}
As in Example~\ref{ex:kaleidoscopes}, let $L = \{R_n\mid n\in \mathbb{N}\}$, where each $R_n$ is a binary relation symbol. We build a random $L$-structure with domain $\n$ such that the interpretation of each $R_n$ is irreflexive and symmetric. For each $i\in \n$, independently choose a random element $A_i\in 2^\n$. Now for each pair $\{i,j\}$, let $A_{ij} = \max(A_i,A_j)$, where we give $2^\n$ its lexicographic order. We set $iR_n j$ if and only if $n\in A_{ij}$. 

We have continuum-many quantifier-free $2$-types $\{p_A\mid A\in 2^\n\}$, where $xR_n y\in p_A$ if and only if $n\in A$, and each is realized with probability $0$, since if $(i,j)$ realizes $p_A$, we must have $A_i = A$ or $A_j = A$.

As long as $A_i$ is not the constant $0$ sequence (which appears with probability $0$), then for any $j\neq i$, there is a positive probability, conditioned on the choice of $A_i$, that $A_j \leq A_i$, and hence $\qftp(i,j) = p_{A_i}$. Since the $A_j$ are chosen independently, almost surely the event $A_j\leq A_i$ occurs for infinitely many $j$. So, almost surely, any non-redundant quantifier-free $2$-type that is realized is realized infinitely many times. However, since the probability that $A_i= A_{j}$ when $i\neq j$ is $0$, almost surely all realizations of $p_{A_i}$ have a common intersection, namely the vertex $i$.
\end{example}

We now describe a modification of the theory of the kaleidoscope random predicate so that all of its ergodic models are not properly ergodic. 

\begin{example}[The blow-up of the theory of the kaleidoscope random predicate]\label{ex:nonexample}
Let $L = \{E\}\cup \{P_n\mid n\in \n\}$, and let $T$ be the model companion of the universal theory asserting that $E$ is an equivalence relation and the $P_n$ are unary predicates respecting $E$ (if $xEy$, then $P_n(x)$ if and only if $P_n(y)$). This is similar to the first-order theory of the kaleidoscope random predicate, but with each element replaced by an infinite $E$-class.

There is no properly ergodic structure that satisfies $T$ almost surely. Indeed, suppose $\mu\modelsas T$. Then for every quantifier-free $1$-type $p$, there is some probability $\mu(p)$ that $p$ is the quantifier-free type of the element $i\in \n$, and, by invariance, $\mu(p)$ does not depend on the choice of $i$. We denote by $S^1_\qf(\mu)$ the (countable) set of quantifier-free $1$-types with positive measure. If $\sum_{p\in S^1_\qf(\mu)} \mu(p) = 1$, then almost surely only the types in $S^1_\qf(\mu)$ are realized, since $\mu\modelsas \forall x\, \bigdoublevee_{p\in S^1_\qf(\mu)}\bigdoublewedge_{\varphi\in p} \varphi(x)$. Further, $\mu$ determines, for each $p\in S^1_\qf(\mu)$, the number of $E$-classes on which $p$ is realized (among $\{1,2,\dots,\aleph_0\}$), since each of the countably many choices is expressible by a sentence of $\Lwow$. The data of which quantifier-free $1$-types are realized, and how many $E$-classes realize each, determines a unique countable $L$-structure up to isomorphism, so $\mu$ is not properly ergodic. 

On the other hand, if $\sum_{p\in S^1_\qf(\mu)} \mu(p) < 1$, then almost surely some types that are not in $S^1_\qf(\mu)$ are realized. Any such type $p$ is realized with probability $0$, and, by ergodicity, the quantifier-free $1$-types of distinct elements of $\n$ are independent. So, almost surely, each of the $0$-probability types is realized at most once. This contradicts the fact that any realized type must be realized on an entire infinite $E$-class.
\end{example}

The next example shows why it important to use $\Lwow$ when performing the Morley--Scott analysis.

\begin{example}[A kaleidoscope-like bipartite graph]\label{ex:infinitary}
Let $L = \{P\} \cup \{R^i_j\mid i,j\in \n\}$, where $P$ is a unary predicate and the $R^i_j$ are binary relations, 
and
let $T$ be the model companion of the following universal theory:
\begin{enumerate}
\item $\forall x\, \forall y\, (R^i_j(x,y)\rightarrow (P(x)\land \lnot P(y)))$ for all $i$ and $j$.
\item $\forall x\forall y\, \lnot (R^i_0(x,y)\land R^{i'}_0(x,y))$ for all $i\neq i'$.
\item $\forall x\, \forall y\, (R^i_{j+1}(x,y)\rightarrow R^i_j(x,y))$ for all $i$ and $j$.
\end{enumerate}

Thus, a model of $T$ is a bipartite graph in which each edge from $x$ to $y$ is labeled by some $i\in \n$ (in the superscript) and the set of all $j< k$ for some $k\in \n_+\cup\{\infty\}$ (in the subscript), where $\n_+$ denotes the positive natural numbers.

Now $T$ is a complete theory with quantifier elimination and with only countably many quantifier-free types over $\emptyset$. Hence, by countable additivity, if $\mu$ is an ergodic structure that satisfies $T$ almost surely, then there is no positive-measure first-order formula $\chi(\xx)$ such that every type containing $\chi$ has measure $0$. Nevertheless, we will describe a properly ergodic structure that almost surely satisfies $T$.

First, for each $x\in \n$, let $P(x)$ hold with independent probability $\onehalf$, and pick $A_x\in 2^{\n}$ independently at random. Now for each pair $x\neq y$, if $P(x)$ and $\lnot P(y)$, then we choose which of the $R^i_j$ will hold of $(x,y)$. First independently choose $i\in \n$ according to a geometric distribution where $i = n$ holds with probability $2^{-(n+1)}$. Then, if $i\in A_x$, independently choose $k\in \n_+\cup\{\infty\}$ according to a geometric distribution where $k = \infty$ holds with probability $\onehalf$ and $k = n$ holds with probability $2^{-(n+1)}$ for $n\in \n_+$. On the other hand, if $i\notin A_x$, then independently choose $k\in \n_+$ according to a geometric distribution where $k = n$ holds with probability $2^{-n}$. Finally, for this choice of $i$ and $k$, we let $R^i_j(x,y)$ hold for all $j<k$.

In the resulting random structure, we can almost surely recover $A_x$ from every $x\in P$, since if $i\in A_x$, then almost surely there is some $y$ such that $R^i_j(x,y)$ for all $j\in \n$ (that is, the choice $k = \infty$ was made for the pair $(x,y)$), whereas this outcome is impossible if $i\notin A_x$. Thus the structure encodes a countable set of elements of $2^\n$, each of which occurs with probability $0$. 

The information encoding $A_x$ is part of the $1$-type of $x$ in any countable fragment of $\Lwow$ containing the infinitary formulas $\{\exists y\, \doublewedge_{j \in \n} R^i_j(x,y)\mid i\in \n\}$, but it is not expressible in first-order logic.
\end{example}

With the exception of Example~\ref{ex:geometric} (and G\'abor Kun's example alluded to in \S\ref{intro-subsec-propergodic}), the preceding examples have all used infinite languages, as this is the easiest setting in which to split the measure over continuum-many types. We conclude with an elementary example in the language with a single binary relation, which encodes the kaleidoscope random predicate into a directed graph, in a way that we easily verify is properly ergodic.

\begin{example}[A directed graph encoding the kaleidoscope random predicate]\label{ex:finiteL}
Let $L = \{R\}$, where $R$ is a binary relation. In our probabilistic construction, we will enforce the following almost surely:
\begin{itemize}
\item Let $O = \{x\mid R(x,x)\}$, and $P = \{x\mid \lnot R(x,x)\}$. Then $O$ and $P$ are both infinite sets.
\item If $R(x,y)$, then either $x$ and $y$ are both in $O$, or $x$ is in $P$ and $y$ is in $O$.
\item $R$ is a preorder on $O$. Denote by $xEy$ the induced equivalence relation $R(x,y) \land R(y,x)$. Then $E$ has infinitely many infinite classes, and $R$ linearly orders the $E$-classes with order type $\omega$.
\item Given $x\in P$ and $y,z\in O$, if $R(x,y)$ and $yEz$, then $R(x,z)$. So $R$ relates each element of $P$ to some subset of the $E$-classes. 
\end{itemize}
Thus we can interpret the kaleidoscope random predicate on $P$, where the $n^\tth$ predicate $P_n$ holds of $x$ if and only if $x$ is $R$-related to the $n^\tth$ class in the linear order on $O$. 

Now it is straightforward to describe the probabilistic construction: for each $i\in \mathbb{N}$, independently let $R(i,i)$ hold with probability $\onehalf$. This determines whether $i$ is in $O$ or $P$. If $i\in O$, we choose which $E$-class to put $i$ in, under the order induced by $R$, selecting the $n^\tth$ class independently with probability $2^{-(n+1)}$. These choices determine all the $R$-relations between elements of $O$. On the other hand, if $i\in P$, we pick $A_i\in 2^\n$ independently at random and relate $i$ to each the $n^\tth$ class in $O$ if and only if $n\in A_i$.

This describes an ergodic structure $\mu$, since the quantifier-free types of disjoint tuples are independent. We obtain the properties described in the bullet points above almost surely, and since $\omega$ is rigid, any isomorphism between structures satisfying these properties must preserve the order on the $E$-classes. For any subset of the $E$-classes, the probability is $0$ that there is an element of $P$ which is related to exactly those $E$-classes, and so $\mu$ is properly ergodic.
\end{example}

\section{Morley--Scott analysis of ergodic structures}\label{sec:analysis}

Throughout this section, let $\mu$ be an ergodic structure. Recall from 
Remark~\ref{measure-of-type} that for a countable fragment $F$ of $\Lwow$ and an $F$-type $p$, the abbreviation $\theta_p(\xx)$ means $\doublewedge_{\varphi\in p}\varphi(\xx)$, and the notation $\mu(p)$ 
means $\mu(\theta_p(\xx))$ and is called the \emph{measure of $p$}.

\begin{definition}\label{def:postypes}
We denote by $S^n_F(\mu)$ the set $\{p\mid \mu(p)>0\}$ of \defn{positive-measure $F$-types} in the variables $x_0,\dots,x_{n-1}$. We include the case $n = 0$: $S^0_F(\mu)$ has one element, namely $\Th_F(\mu)$.
\end{definition}

\begin{lemma}\label{lem:ctbletypes}
For all $n\in \n$, we have $|S^n_F(\mu)|\leq \aleph_0$.
\end{lemma}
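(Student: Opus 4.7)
The plan is to reduce the claim to the elementary fact that a probability measure can have at most countably many pairwise disjoint positive-measure events. First, I would fix a tuple $\aa \in \n^{[n]}$ of distinct elements. For any $F$-type $p(\xx)$, the formula $\theta_p(\xx) = \doublewedge_{\varphi \in p}\varphi(\xx)$ is a formula of $\Lwow$, so by the discussion in \S\ref{sec:definitions} the set $\extent{\theta_p(\aa)}$ is Borel, and by Remark~\ref{measure-of-formula} its $\mu$-measure equals $\mu(p)$ independently of the choice of $\aa$.

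Next, I would observe that if $p$ and $q$ are distinct $F$-types, then since $F$-types are maximal (they are the types realized by some tuple in some structure, so in particular complete over $\emptyset$ relative to $F$), there must be a formula $\varphi(\xx) \in F$ with $\varphi \in p$ and $\lnot \varphi \in q$. Hence the Borel sets $\extent{\theta_p(\aa)}$ and $\extent{\theta_q(\aa)}$ are disjoint. Thus $\{\extent{\theta_p(\aa)} : p \in S^n_F(\mu)\}$ is a family of pairwise disjoint Borel sets, each of positive $\mu$-measure.

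Finally, for each $k \in \n_{\geq 1}$ let $S_k = \{p \in S^n_F(\mu) : \mu(p) \geq 1/k\}$. Since the sets $\extent{\theta_p(\aa)}$ for $p \in S_k$ are pairwise disjoint, countable additivity together with $\mu(\Str_L) = 1$ forces $|S_k| \leq k$. Therefore
\[
S^n_F(\mu) \;=\; \bigcup_{k \geq 1} S_k
\]
is a countable union of finite sets, and so $|S^n_F(\mu)| \leq \aleph_0$, as required. There is no real obstacle here; the only point requiring mild care is recording why $\extent{\theta_p(\aa)}$ is Borel (since $\theta_p$ is infinitary rather than quantifier-free) and why distinct $F$-types give disjoint such sets, both of which follow from material already established in \S\ref{sec:definitions}.
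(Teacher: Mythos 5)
Your proof is correct and follows essentially the same route as the paper: fix a tuple of distinct elements, note that distinct positive-measure $F$-types give pairwise disjoint positive-measure sets $\extent{\theta_p(\aa)}$, and conclude countability by stratifying according to the threshold $\mu(p)\geq 1/k$. The extra details you record (Borelness of $\extent{\theta_p(\aa)}$ and disjointness via completeness of $F$-types) are fine and merely make explicit what the paper leaves implicit.
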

\begin{proof}
Fix a tuple $\aa$ of distinct elements from $\omega$. The sets $\{\extent{\theta_p(\aa)}\mid p\in S^n_F(\mu)\}$ are disjoint sets of positive measure in $\Str_L$. By additivity of $\mu$, for all $m\in \n$, $P_m = \{p\in S^n_F(\mu)\mid \mu(p)\geq \nicefrac{1}{m}\}$ is finite (of size at most $m$), so $S^n_F(\mu) = \bigcup_{m\in\omega}P_m$ is countable.
\end{proof}

We build a sequence $\{F_\alpha\}_{\alpha\in\omega_1}$ of countable fragments of $\Lwow$ of length $\omega_1$, depending on the ergodic structure $\mu$:
\begin{align*}
F_0 &= \mathrm{FO}, \text{the first-order fragment.}\\
F_{\alpha+1} &= \text{the fragment generated by }F_\alpha \cup \Bigl\{\theta_p(\xx)\mid p\in \bigcup_{n\in\n}S^n_{F_\alpha}(\mu)\Bigr\}.\\
F_\gamma &= \bigcup_{\alpha<\gamma}F_{\alpha}, \text{if $\gamma$ is a limit ordinal.}
\end{align*}

\begin{definition} \label{def:splits}
We say that $p\in S^n_{F_\alpha}(\mu)$ \defn{splits} at $\beta>\alpha$ if $\mu(q) < \mu(p)$ for all types $q\in S^n_{F_\beta}(\mu)$ such that $p\subseteq q$. We say that $p$ \defn{splits later} if there exists $\beta$ such that $p$ splits at $\beta$. We say that $\mu$ \defn{has stabilized} at $\gamma$ if for all $n\in \n$, no type in $S^n_{F_\gamma}(\mu)$ splits later.
\end{definition}

\begin{lemma}\label{lem:splitting}
Let $\alpha<\beta<\gamma$.
\begin{enumerate}
\item If a type $p\in S^n_{F_\alpha}(\mu)$ splits at $\beta$, then $p$ also splits at $\gamma$.
\item Suppose $p\in S^n_{F_\beta}(\mu)$ splits at $\gamma$. Then $p' = p\cap F_\alpha$ is in $S^n_{F_\alpha}(\mu)$ and also splits at $\gamma$.
\item If no type in $S^n_{F_\alpha}(\mu)$ splits later, then no type in $S^n_{F_\beta}(\mu)$ splits later.
\end{enumerate}
\end{lemma}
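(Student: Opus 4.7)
My approach rests on two elementary facts about how $\mu$ interacts with restriction of types. First, whenever $F \subseteq F'$ are fragments of $\Lwow$ and $q$ is a complete $F'$-type, then $q \cap F$ is a complete $F$-type with $\mu(q) \leq \mu(q \cap F)$, since $\theta_q(\xx)$ implies $\theta_{q \cap F}(\xx)$. Second, the extents of distinct complete $F$-types (in a fixed tuple of variables) extending a given $F_\alpha$-type $p'$ are pairwise disjoint Borel subsets of $\extent{\theta_{p'}(\aa)}$, so by countable additivity (together with Lemma~\ref{lem:ctbletypes}) their measures sum to at most $\mu(p')$.

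For part (1), I will argue by direct restriction. Given any $q \in S^n_{F_\gamma}(\mu)$ extending $p$, set $q' = q \cap F_\beta$; then $p \subseteq q'$ and $q' \in S^n_{F_\beta}(\mu)$, so the assumption that $p$ splits at $\beta$ yields $\mu(q') < \mu(p)$, and the first fact above gives $\mu(q) \leq \mu(q') < \mu(p)$.

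The main content is part (2), where the key obstacle will be showing that any equal-measure $F_\gamma$-extension of $p' \defas p \cap F_\alpha$ must actually extend $p$. First observe that $\mu(p') \geq \mu(p) > 0$, so $p' \in S^n_{F_\alpha}(\mu)$. Suppose toward a contradiction that $p'$ does not split at $\gamma$: then there is some $q' \in S^n_{F_\gamma}(\mu)$ with $p' \subseteq q'$ and $\mu(q') = \mu(p')$ (equality because the reverse inequality follows from the first fact). Set $s = q' \cap F_\beta$; the sandwich $\mu(p') \geq \mu(s) \geq \mu(q') = \mu(p')$ forces $\mu(s) = \mu(p')$. Now I will apply the second fact at level $F_\beta$: since $s \in S^n_{F_\beta}(\mu)$ already accounts for all of $\mu(p')$, there cannot be any other positive-measure $F_\beta$-extension of $p'$, for such an extension would push the total strictly above $\mu(p')$. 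Because $p$ is itself a positive-measure $F_\beta$-extension of $p'$, uniqueness forces $p = s \subseteq q'$. Hence $q' \in S^n_{F_\gamma}(\mu)$ extends $p$ with $\mu(q') = \mu(p)$, contradicting the hypothesis that $p$ splits at $\gamma$.

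Part (3) will follow immediately from part (2) by contrapositive: if some $p \in S^n_{F_\beta}(\mu)$ splits at some $\gamma > \beta$, then applying (2) with the ordinals $\alpha < \beta < \gamma$ shows that $p \cap F_\alpha$ belongs to $S^n_{F_\alpha}(\mu)$ and splits at $\gamma$, contradicting the hypothesis that no type in $S^n_{F_\alpha}(\mu)$ splits later. The only real difficulty lies in the uniqueness-of-extension step in part (2); once that is in place, parts (1) and (3) are routine consequences.
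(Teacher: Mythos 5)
Your proposal is correct and uses essentially the same ingredients as the paper's proof: monotonicity of $\mu$ under restriction of types and disjointness of the extents of distinct complete types extending $p'$, with parts (1) and (3) matching the paper verbatim in substance. Part (2) is just the paper's direct two-case estimate (either $p\subseteq q$, giving $\mu(q)<\mu(p)\leq\mu(p')$, or $p\not\subseteq q$, giving $\mu(q)\leq\mu(p')-\mu(p)$) repackaged in contrapositive form via the observation that a full-measure $F_\gamma$-extension would force its restriction $s=q'\cap F_\beta$ to coincide with $p$, so the two arguments are logically equivalent.
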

\begin{proof}
(1)
Pick $q\in S^n_{F_\gamma}(\mu)$ with $p\subseteq q$, and let $q' = q\cap F_\beta$. Then $\mu(q) \leq \mu(q') < \mu(p)$, since $p$ splits at $\beta$. 

\vspace*{0.5em}
\noindent (2)
First, $0< \mu(p)\leq \mu(p')$, so $p' \in S^n_{F_\alpha}(\mu)$. Pick $q\in S^n_{F_\gamma}(\mu)$ such that $p'\subseteq q$. If $p\subseteq q$, then $\mu(q)<\mu(p)\leq \mu(p')$, since $p$ splits at $\gamma$. And if $p\not\subseteq q$, then $\mu(q) \leq \mu(p')-\mu(p) <\mu(p')$, since $\mu(p)>0$. In either case, $\mu(q) < \mu(p')$, so $p'$ splits at $\gamma$.

\vspace*{0.5em}
\noindent (3)
If some type in $S^n_{F_\beta}(\mu)$ splits later, then by (2), $p' = p\cap F_\alpha$ also splits later, and $p'\in S^n_{F_\alpha}(\mu)$.
\end{proof}

\begin{lemma}\label{lem:stabilizing}
There is some countable ordinal $\gamma$ such that $\mu$ has stabilized at $\gamma$.
\end{lemma}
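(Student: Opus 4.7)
The plan is to introduce, for each arity $n$, a monotone functional that strictly decreases whenever a type of $S^n_{F_\alpha}(\mu)$ splits, and then invoke the fact that a non-increasing $\omega_1$-sequence of reals must stabilize at a countable ordinal. Specifically, I will work with
\[
M_n(\alpha) \defas \sum_{p \in S^n_{F_\alpha}(\mu)} \mu(p)^2.
\]
This is well-defined and bounded above by $1$, since $S^n_{F_\alpha}(\mu)$ is countable by Lemma~\ref{lem:ctbletypes} and the sets $\extent{\theta_p(\aa)}$ are pairwise disjoint (for any fixed tuple $\aa$ of distinct elements).

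To show $M_n$ is non-increasing in $\alpha$, I note that for $\alpha < \beta$ and each $q \in S^n_{F_\beta}(\mu)$, its restriction $q \cap F_\alpha$ lies in $S^n_{F_\alpha}(\mu)$ because $\mu(q \cap F_\alpha) \geq \mu(q) > 0$. Grouping the extensions of each $p \in S^n_{F_\alpha}(\mu)$, using $\sum_{q \supseteq p} \mu(q) \leq \mu(p)$ (disjointness of extent-sets again), and applying $\sum_i x_i^2 \leq (\sum_i x_i)^2$ for non-negative reals yields $M_n(\beta) \leq M_n(\alpha)$. More importantly, I will verify that if some $p \in S^n_{F_\alpha}(\mu)$ splits at $\beta$, then the contribution from its extensions drops strictly: either $p$ has no positive-measure extension (inner sum $0$), or its unique positive-measure extension has smaller measure (strict inequality on the single squared term), or mass is spread over at least two positive-measure extensions (strict Cauchy-type inequality $\sum_i x_i^2 < (\sum_i x_i)^2$ when two of the $x_i$ are positive)---in every case, $\sum_{q \supseteq p} \mu(q)^2 < \mu(p)^2$.

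Fix $n$. Since $M_n\colon \omega_1 \to [0,1]$ is non-increasing, its range is a well-ordered subset of $\mathbb{R}$ under the reverse real order, hence countable, so $M_n$ is eventually constant from some countable ordinal $\gamma_n$ onward. If some $p \in S^n_{F_{\gamma_n}}(\mu)$ were to split at $\beta > \gamma_n$, then summing the strict-drop inequality for $p$ with the non-strict inequalities for the other types in $S^n_{F_{\gamma_n}}(\mu)$ would yield $M_n(\beta) < M_n(\gamma_n)$, contradicting stabilization. Hence no type in $S^n_{F_{\gamma_n}}(\mu)$ splits later. Setting $\gamma \defas \sup_{n \in \n} \gamma_n$ yields a countable ordinal, and Lemma~\ref{lem:splitting}(3) propagates the stabilization from each $\gamma_n$ up to $\gamma$, so $\mu$ has stabilized at $\gamma$.

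The only step requiring care is the strict-drop claim for a splitting type; the choice of the strictly convex $\sum_p \mu(p)^2$ (rather than the linear $\sum_p \mu(p)$, which would detect only mass leakage to measure-zero types) is what allows us to simultaneously capture redistribution of mass among multiple positive-measure children.
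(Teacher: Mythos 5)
Your proof is correct, but it takes a genuinely different route from the paper's. The paper tracks, for each $\alpha$, the quantity $r_\alpha = \sup\{\mu(p)\mid p \text{ splits later}\}$, observes that this supremum is attained by finitely many types, shows $r_\alpha$ is non-increasing via Lemma~\ref{lem:splitting}~(1),(2), and then, assuming no stabilization, runs a transfinite recursion producing a strictly decreasing $\omega_1$-sequence of reals, which is impossible. You instead introduce the strictly convex potential $M_n(\alpha)=\sum_{p\in S^n_{F_\alpha}(\mu)}\mu(p)^2$, check it is non-increasing by grouping the positive-measure $F_\beta$-types over their $F_\alpha$-restrictions (using disjointness of the sets $\extent{\theta_q(\aa)}$ and countability from Lemma~\ref{lem:ctbletypes}), and check that a split at $\beta$ forces a strict drop in every case ($0$, $1$, or $\geq 2$ positive-measure extensions) --- the squaring is exactly what detects redistribution of mass among several children, which the linear sum would miss, as you note. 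This reduces the whole lemma to the single standard fact that a non-increasing function $\omega_1\to[0,1]$ is eventually constant; your justification of that fact (range well-ordered under the reverse order, hence countable) is fine, though one could add the half-line that the level sets are convex, there are countably many of them, and regularity of $\omega_1$ forces one to be a final segment. Your approach buys a cleaner, more self-contained argument that avoids the ``splits later'' bookkeeping, the attained-supremum observation, and the explicit transfinite recursion, while both proofs end identically: take $\gamma=\sup_n\gamma_n<\omega_1$ and invoke Lemma~\ref{lem:splitting}~(3).
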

\begin{proof}
Fix $n\in \n$. For each $\alpha\in\omega_1$, let 
\begin{align*}
\Sp(\alpha) &= \{p\in S^n_{F_\alpha}(\mu)\mid p \text{ splits later}\},\\
r_\alpha &= \sup\{\mu(p)\mid p\in \Sp(\alpha)\}.
\end{align*}
Note that $\Sp(\alpha)$ is countable, since $S^n_{F_\alpha}$ is. If $\Sp(\alpha)$ is non-empty, then $r_\alpha > 0$, and in fact the supremum is achieved by finitely many types, since $\sum_{p\in \Sp(\alpha)} \mu(p) \leq 1.$

By Lemma~\ref{lem:splitting} (2), the measure of any type in $\Sp(\beta)$ is bounded above by the measure of a type in $\Sp(\alpha)$, namely its restriction to $F_\alpha$. So we have $r_\beta\leq r_\alpha$ whenever $\alpha <\beta$. 

Now assume for a contradiction that $\Sp(\alpha)$ is non-empty for all $\alpha$. We build a strictly increasing sequence $\langle \alpha_\delta\rangle_{\delta\in \omega_1}$ in $\omega_1$, such that $\langle r_{\alpha_\delta}\rangle_{\delta\in \omega_1}$ is a strictly decreasing sequence in $[0,1]$. Begin with $\alpha_0 = 0$.

At each successor stage, we are given $\alpha = \alpha_\delta$, and we seek $\beta = \alpha_{\delta+1}$ with $r_\beta < r_\alpha$. Since $\Sp(\alpha)$ is non-empty, there are finitely many types $p_1,\dots,p_n$ of maximal measure $r_\alpha>0$. For each $i$, pick $\beta_i>\alpha$ such that $p_i$ splits at $\beta_i$, and let $\beta = \max(\beta_1,\dots,\beta_n)$. By Lemma~\ref{lem:splitting} (1), each $p_i$ splits at $\beta$. Let $q$ be a type in $\Sp(\beta)$ with $\mu(q) = r_\beta$, and let $q' = q\cap F_\alpha$. By Lemma~\ref{lem:splitting} (2), $q'\in \Sp(\alpha)$. If $q'$ is one of the $p_i$, then $\mu(q) < \mu(p_i) = r_\alpha$, since $p_i$ splits at $\beta$. If not, then $\mu(q) \leq \mu(q') < r_\alpha$. In either case, $r_\beta = \mu(q) < r_\alpha$.

If $\lambda$ is a countable limit ordinal, let $\alpha_\lambda = \sup_{\delta<\lambda} \alpha_\delta$. This is an element of $\omega_1$, since $\omega_1$ is regular. And for all $\delta < \lambda$, since $\alpha_{\delta+1}<\alpha_\lambda$, we have $r_{\alpha_\lambda} \leq r_{\alpha_{\delta+1}} < r_{\alpha_\delta}$. 

Of course, there is no strictly decreasing sequence of real numbers of length $\omega_1$, since $\mathbb{R}$ contains a countable dense set. Hence there is some $\gamma_n\in\omega_1$ such that $\Sp(\gamma_n)$ is empty, i.e., no type in $S^n_{F_{\gamma_n}}$ splits later. Let $\gamma = \sup_{n\in\n} \gamma_n \in \omega_1$. Then by Lemma~\ref{lem:splitting} (3), $\mu$ has stabilized at $\gamma$.
\end{proof}

We can think of the minimal ordinal $\gamma$ such that $\mu$ has stabilized at $\gamma$ as an analogue of the Scott rank for the ergodic structure $\mu$. Since no $F_\gamma$-type splits later, every positive-measure $F_{\gamma+1}$-type $q$ is isolated by the $F_{\gamma+1}$-formula $\theta_p$ for its restriction $p = q\cap F_\gamma$, relative to $\Th_{F_{\gamma+1}}(\mu)$. Lemma~\ref{lem:concentrating} says that if every tuple satisfies one of these positive-measure types almost surely, then $\mu$ almost surely satisfies a Scott sentence.

\begin{lemma}\label{lem:concentrating} 
	Suppose that $\mu$ has stabilized at $\gamma$, and that for all $n\in \n$, 
	\[
		\sum_{p\in S^n_{F_\gamma}(\mu)}\mu(p) = 1.
	\]
	Then $\mu$ concentrates on a countable structure.
\end{lemma}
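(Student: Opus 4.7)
The plan is to exhibit a countable $L$-structure $M$ such that $\mu$ almost surely produces a structure isomorphic to $M$; by Scott's theorem this will mean $\mu$ concentrates on the Scott sentence $\varphi_M$. The argument has three ingredients: a uniqueness consequence of stabilization, an extension property, and a back-and-forth.

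First, for each positive-measure $p \in S^n_{F_\gamma}(\mu)$, the non-splitting of $p$ at $\gamma+1$ guarantees an extension $\tilde p \in S^n_{F_{\gamma+1}}(\mu)$ with $\mu(\tilde p) = \mu(p)$; since any two distinct $F_{\gamma+1}$-extensions of $p$ have disjoint extents and so their measures sum to at most $\mu(p)$, this $\tilde p$ is the only positive-measure $F_{\gamma+1}$-extension of $p$, and in particular $\mu(\theta_p(\xx) \wedge \lnot \theta_{\tilde p}(\xx)) = 0$. The key lemma (the \emph{extension property}) states that for positive-measure $F_\gamma$-types $p(\xx) \subseteq p'(\xx,y)$ extending by one variable, $\mu(\theta_p(\xx) \wedge \lnot \exists y\, \theta_{p'}(\xx,y)) = 0$. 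If this measure were positive, then $\lnot \exists y\, \theta_{p'}(\xx,y)$ would have to lie in $\tilde p$, which together with $\theta_{\tilde{p'}}(\xx,y) \to \theta_{p'}(\xx,y) \to \exists y\, \theta_{p'}(\xx,y)$ makes $\theta_{\tilde p}(\xx) \wedge \theta_{\tilde{p'}}(\xx,y)$ unsatisfiable. On the other hand, $\theta_{\tilde{p'}}(\xx,y) \to \theta_p(\xx)$ combined with the unique-extension fact applied to $p$ forces $\mu(\theta_{\tilde{p'}}(\xx,y) \wedge \lnot \theta_{\tilde p}(\xx)) = 0$; together, these yield $\mu(\tilde{p'}) = 0$, contradicting positivity.

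Call $\fM \in \Str_L$ \emph{$\mu$-rich} if every tuple from $\fM$ realizes a positive-measure $F_\gamma$-type and, for every pair $p \subseteq p'$ of positive-measure $F_\gamma$-types extending by one variable, every realization of $p$ in $\fM$ extends to a realization of $p'$. By the hypothesis $\sum_p \mu(p) = 1$ at each arity and countable subadditivity over tuples, the first condition holds $\mu$-almost surely; by the extension property and countable subadditivity over the countably many pairs $(p,p')$ and tuples, so does the second. A standard back-and-forth between two countable $\mu$-rich structures $\fM_1, \fM_2$, at each stage preserving the positive-measure $F_\gamma$-type of the current partial domain and invoking the extension property to find a partner for each new element, produces an $L$-isomorphism $\fM_1 \cong \fM_2$, since $\FO \subseteq F_\gamma$ already captures the quantifier-free types. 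Fixing any specific countable $\mu$-rich $M$ (which exists since $\mu$-almost every structure is $\mu$-rich), we conclude that $\mu(\{\fM : \fM \cong M\}) = 1$, so $\mu$ concentrates on $M$.

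The main obstacle is the extension property, where stabilization must be used essentially to pin down the unique positive-measure $F_{\gamma+1}$-extension of each positive-measure $F_\gamma$-type, and the unsatisfiability argument juggles the four types $p$, $\tilde p$, $p'$, and $\tilde{p'}$ simultaneously. Once this is in hand, the rest is standard Fra\"iss\'e-style back-and-forth bookkeeping.
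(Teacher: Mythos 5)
Your proof is correct and follows essentially the same route as the paper's: stabilization at $\gamma$ yields, for each positive-measure $F_\gamma$-type, a unique $F_{\gamma+1}$-extension of equal measure, from which you derive the almost-sure one-point extension property (the paper's $\psi_p$), while the hypothesis $\sum_p \mu(p) = 1$ gives the almost-sure closure property (the paper's $\varphi_p$), and a back-and-forth between almost-sure (``$\mu$-rich'') countable structures finishes the argument. The differences are cosmetic---your contradiction via $\tilde{p'}$ replaces the paper's direct observation that $(\exists y\notin\xx)\,\theta_{p'}(\xx,y)$ must belong to the distinguished extension $\tilde p$, and your richness condition should be stated for tuples of distinct elements, since redundant tuples realize measure-zero types---so nothing substantive needs to change.
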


\begin{proof}
For each type $r(\xx)\in S^n_{F_\gamma}(\mu)$ (we include the case $n = 0$), let $E_r$ be the set of types $q(\xx,y)\in S^{n+1}_{F_\gamma}(\mu)$ with $r\subseteq q$. Fix a type $p(\xx)\in S^n_{F_\gamma}(\mu)$, let $\varphi_p$ be the sentence 
	\[
		\forall \xx\, \Bigl( \theta_p(\xx)\rightarrow \forall (y\notin \xx)\,\bigdoublevee_{q\in E_p}\theta_q(\xx,y) \Bigr),
	\]
and let $\psi_p$ be the sentence
\[
\forall \xx\,\Bigl( \theta_p(\xx) \rightarrow \bigdoublewedge_{q\in E_p} \exists (y\notin \xx)\,\theta_q(\xx,y)\Bigr)
\]
Here $\forall (y\notin \xx)\rho(\xx,y)$ and $\exists (y\notin \xx)\rho(\xx,y)$ are shorthand for $\forall y((\bigwedge_{i = 0}^{n-1}y\neq x_i)\rightarrow \rho(\xx,y))$ and $\exists y((\bigwedge_{i = 0}^{n-1}y\neq x_i)\land \rho(\xx,y))$, respectively. We would like to show that $\mu$ satisfies $\varphi_p$ and $\psi_p$ almost surely.

By assumption, and since every $q\in S^{n+1}_{F_\gamma}(\mu)$ is in $E_r$ for a unique $r\in S^n_{F_\gamma}(\mu)$,
\[
	1 = \sum_{q\in S^{n+1}_{F_\gamma}(\mu)}\mu(q)= \sum_{r\in S^n_{F_\gamma}(\mu)}\,\sum_{q\in E_r} \mu(q).
\]

Then for all $r\in S^n_{F_\gamma}(\mu)$, we must have $$\mu(r) = \sum_{q\in E_r}\mu(q).$$ 

In particular, this is true for $r = p$, so for any tuple $\aa$ and any $b$ not in $\aa$, $\bextent{\bigdoublevee_{q\in E_p} \theta_q(\aa,b)}$ has full measure in $\extent{\theta_p(\aa)}$ (this is true even when $\aa$ contains repeated elements, since in that case $\extent{\theta_p(\aa)}$ has measure $0$). A countable intersection (over $b\in \n\setminus \enum{\aa}$) of subsets of $\extent{\theta_p(\aa)}$ with full measure still has full measure, so 
\[
	 \mu\Bigl(\Bextent{\theta_p(\aa)\rightarrow \forall (y\notin \aa) \bigdoublevee_{q\in E_p}\theta_q(\aa,y)}\Bigr) = 1.
\]
Taking another countable intersection over all tuples $\aa$, we have $\mu\modelsas\varphi_p$.

We turn now to $\psi_p$. Since $\mu$ stabilizes at $\gamma$, there is a (necessarily unique) extension of $p$ to a type $p^*\in S^n_{F_{\gamma+1}}(\mu)$ with $\mu(p^*) = \mu(p)$. Let $q(\xx,y)$ be any type in $E_p$, and let $\upsilon_q(\xx)\in F_{\gamma+1}$ be the formula $(\exists y\notin \xx)\,\theta_q(\xx,y)$. Note that $\theta_q(\xx,y)$ implies $\upsilon_q(\xx)$ and $\upsilon_q(\xx)$ implies $\theta_p(\xx)$. So $\mu(\upsilon_q(\xx))\geq \mu(q) > 0$, and we must have $\upsilon_q(\xx)\in p^*$, otherwise $\mu(p^*)\leq \mu(p) - \mu(\upsilon_q(\xx))$. Finally, we conclude that for any tuple $\aa$, the set $\extent{\upsilon_q(\aa)}$ has full measure in $\extent{\theta_p(\aa)}$, since $\mu(p) = \mu(p^*) \leq \mu(\upsilon_q(\xx)) \leq \mu(p)$. 

As before, a countable intersection of subsets with full measure has full measure, so 
\[
\mu\Bigl(\Bextent{\theta_p(\aa)\rightarrow \bigdoublewedge_{q\in E_p}\exists(y\notin \aa)\,\theta_q(\aa,y)}\Bigr) = 1.
\]
Taking another countable intersection over all tuples $\aa$, we have $\mu\modelsas \psi_p$.

Let $T = \Th_{F_\gamma}(\mu)\cup \{\varphi_p,\psi_p\mid p\in \bigcup_{n\in \n} S^n_{F_\gamma}(\mu)\}$, and note that $T$ is countable. Since $\mu$ almost surely satisfies $T$, it suffices to show that any two countable models of $T$ are isomorphic. This is a straightforward back-and-forth argument, using $\varphi_p$ and $\psi_p$ to extend a partial $F_\gamma$-elementary isomorphism defined on a realization of $p$ by one step: $\varphi_p$ tells us that each one-point extension in one model realizes one of the types in $E_p$, and $\psi_p$ tells us that every type in $E_p$ is realized in a one-point extension in the other model. To start, the empty tuples in any two models of $T$ satisfy the same $F_\gamma$-type, namely $\Th_{F_\gamma}(\mu)$.
\end{proof}

\begin{theorem}\label{thm:characterization1}
Let $\mu$ be an ergodic structure. Then $\mu$ is properly ergodic if and only if for every countable fragment $F$ of $\Lwow$, there is a countable fragment $F' \supseteq F$ and a formula $\chi(\xx)$ in $F'$ such that $\mu(\chi(\xx)) > 0$, but $\mu(p) = 0$ for every $F'$-type $p(\xx)$ containing $\chi(\xx)$.
\end{theorem}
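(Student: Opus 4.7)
\emph{Proof plan.} For the forward direction, suppose $\mu$ is properly ergodic and fix a countable fragment $F$. I plan to re-run the Morley--Scott hierarchy of Section~\ref{sec:analysis} starting from $F_0 \defas F$ in place of $\FO$; the proofs of Lemmas~\ref{lem:splitting}, \ref{lem:stabilizing}, and \ref{lem:concentrating} never use the specific choice $F_0 = \FO$, so they carry over unchanged. Let $\gamma < \omega_1$ be a stage at which $\mu$ has stabilized (Lemma~\ref{lem:stabilizing}). Since $\mu$ is properly ergodic, the contrapositive of Lemma~\ref{lem:concentrating} yields some arity $n$ for which
\[
	\sum_{p \in S^n_{F_\gamma}(\mu)} \mu(p) < 1.
\]
Set
\[
	\chi(\xx) \defas \bigdoublewedge_{p \in S^n_{F_\gamma}(\mu)} \lnot \theta_p(\xx),
\]
a well-defined formula of $\Lwow$ since $S^n_{F_\gamma}(\mu)$ is countable (Lemma~\ref{lem:ctbletypes}), and let $F'$ be the countable fragment of $\Lwow$ generated by $F_\gamma \cup \{\chi\}$, so that $F \subseteq F'$ and $\chi \in F'$. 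The disjointness of distinct complete types yields $\mu(\chi) = 1 - \sum_{p \in S^n_{F_\gamma}(\mu)} \mu(p) > 0$, and for any complete $F'$-type $q$ containing $\chi$, the restriction $q \cap F_\gamma$ must avoid every element of $S^n_{F_\gamma}(\mu)$, so $\mu(q) \leq \mu(q \cap F_\gamma) = 0$.

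For the converse I prove the contrapositive. If $\mu$ is not properly ergodic, then by ergodicity $\mu$ concentrates on the isomorphism class of some countable structure $M$; I will verify that the witness property then fails for every countable fragment $F$. Fix $F$, a countable $F' \supseteq F$, and $\chi(\xx) \in F'$ with $\mu(\chi) > 0$. Since $M$ is countable, it realizes only countably many complete $F'$-types; enumerate them as $(p_i)_{i \in I}$. Because $\mu$ is almost surely isomorphic to $M$, a tuple of distinct elements of $\n$ almost surely realizes some $p_i$, and since distinct complete types have disjoint realizations, countable additivity gives $\sum_{i \in I} \mu(p_i) = 1$. Applying the same decomposition to $\chi$ yields $\mu(\chi) = \sum_{i \,:\, \chi \in p_i} \mu(p_i) > 0$, so at least one $p_i$ contains $\chi$ and has positive measure, contradicting the assumed witness.

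The bulk of the work lies in the forward direction, and it all funnels through Lemma~\ref{lem:concentrating}: once that lemma tells us that failure to concentrate on a countable structure forces some arity to have positive-measure $F_\gamma$-types summing to less than $1$, the formula $\chi$ is essentially forced on us. The one mild but necessary observation is that the Morley--Scott analysis can be seeded at any countable starting fragment, which is precisely what lets the witness $F'$ be taken to extend the prescribed $F$.
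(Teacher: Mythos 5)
Your proposal is correct and follows essentially the same route as the paper: the forward direction is the paper's Morley--Scott argument via Lemmas~\ref{lem:stabilizing} and~\ref{lem:concentrating} with the same formula $\chi = \bigdoublewedge_{p} \lnot\theta_p$ (seeding the hierarchy at $F$ rather than at $\FO$ and adjoining $F$ to $F'$ afterward is an immaterial variation), and your contrapositive converse uses exactly the paper's key facts --- a countable structure realizes only countably many $F'$-types, plus countable additivity and ergodicity --- merely reorganized as a direct contradiction instead of showing each orbit is null.
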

\begin{proof}
Suppose $\mu$ is properly ergodic. By Lemma~\ref{lem:stabilizing}, $\mu$ stabilizes at some $\gamma$, and by Lemma~\ref{lem:concentrating}, there is some $n$ such that $\sum_{p\in S^n_{F_\gamma}(\mu)} \mu(p) < 1$. Let $\chi(\xx)$ be the formula $\doublewedge_{p\in S^n_{F_\gamma}(\mu)} \lnot \theta_p(\xx)$. Then $\mu(\chi(\xx)) >0$.

Let $F'$ be the countable fragment generated by $F\cup F_\gamma\cup\{\chi(\xx)\}$, and suppose that $p(\xx)$ is an $F'$-type containing $\chi(\xx)$. Let $q = p \cap F_\gamma$. Then $q$ is an $F_\gamma$ type that is consistent with $\chi(\xx)$, so $q\notin S^n_{F_\gamma}(\mu)$, and $\mu(p)\leq \mu(q) = 0$.

Conversely, suppose we have such a fragment $F'$ and such a formula $\chi(\xx)$. Since $\mu(\chi(\xx))>0$, by ergodicity, $\mu\modelsas \exists\xx\, \chi(\xx)$. Let $M$ be a countable structure. If $M$ contains no tuple satisfying $\chi$, then $\mu$ assigns measure $0$ to the isomorphism class of $M$. On the other hand, if $M$ contains a tuple $\aa$ satisfying $\chi(\xx)$, then since $\mu$ assigns measure $0$ to the set of structures realizing $\tp_{F'}(\aa)$, it also assigns measure $0$ to the isomorphism class of $M$. So $\mu$ is properly ergodic.
\end{proof}

By countable additivity, if a sentence $\varphi$ of $\Lwow$ has only countably many countable models up to isomorphism, then any ergodic structure $\mu$ that almost surely satisfies $\varphi$ is almost surely isomorphic to one of its models. That is, no ergodic model of $\varphi$ is properly ergodic. We show now that the same is true if $\varphi$ is a counterexample to Vaught's conjecture, i.e., a sentence with uncountably many, but fewer than continuum-many, countable models.

\begin{corollary}[``Vaught's Conjecture for ergodic structures'']\label{cor:continuum} Let $\varphi$ be a sentence of $\Lwow$. If there is a properly ergodic structure $\mu$ such that $\mu\modelsas \varphi$, then $\varphi$ has continuum-many countable models up to isomorphism.
\end{corollary}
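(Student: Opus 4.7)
The plan is to combine Theorem~\ref{thm:characterization1} with countable additivity and the classical perfect-set theorem for analytic subsets of a Polish space. First, fix any countable fragment $F$ of $\Lwow$ containing $\varphi$ and apply Theorem~\ref{thm:characterization1} to $\mu$, producing a countable fragment $F'\supseteq F$ and a formula $\chi(\xx)\in F'$ with $\mu(\chi)>0$ while $\mu(p)=0$ for every $F'$-type $p$ containing $\chi$. Fix also a tuple $\aa$ of distinct elements of $\n$ of the same length as $\xx$. Let $X$ denote the set of complete $F'$-types of the form $\tp_{F'}(\aa/M)$ for some $M\in\bextent{\varphi\land\chi(\aa)}$; by construction each element of $X$ contains $\chi$ and is realized in a countable model of $\varphi$.

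The next step is to show that $X$ is uncountable. Since $\mu\modelsas\varphi$, we have $\mu(\bextent{\varphi\land\chi(\aa)})=\mu(\chi)>0$, and every $M$ in that set lies in $\bextent{\theta_p(\aa)}$ for $p=\tp_{F'}(\aa/M)\in X$. Hence $\bextent{\varphi\land\chi(\aa)}\subseteq \bigcup_{p\in X}\bextent{\theta_p(\aa)}$, so if $X$ were countable, countable additivity together with $\mu(p)=0$ for each $p\in X$ would force $\mu(\chi)=0$, a contradiction.

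Now upgrade uncountability to cardinality $2^{\aleph_0}$. The map $\pi\colon\Str_L\to 2^{F'}$ sending $M\mapsto\{\psi\in F'\mid M\models\psi(\aa)\}$ is Borel, since $\pi^{-1}(\{p\mid \psi\in p\})=\bextent{\psi(\aa)}$ is a Borel set for each $\psi\in F'$. Therefore $X=\pi(\bextent{\varphi\land\chi(\aa)})$ is an analytic subset of the Polish space $2^{F'}$, and the classical perfect-set theorem for analytic sets implies that any uncountable analytic set in a Polish space has cardinality $2^{\aleph_0}$. This is the step expected to be most delicate, though it really amounts to a bookkeeping check that the standard descriptive-set-theoretic machinery applies in the present setting.

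Finally, count the models. Every $p\in X$ is realized in some countable model of $\varphi$, since the witnessing $M$ has domain $\n$ and satisfies $\varphi$; and any countable structure realizes only countably many $F'$-types of $n$-tuples. So if $\kappa$ is the number of isomorphism classes of countable models of $\varphi$, then $2^{\aleph_0}=|X|\leq \kappa\cdot\aleph_0$, forcing $\kappa\geq 2^{\aleph_0}$, and since $|\Str_L|=2^{\aleph_0}$ we conclude $\kappa=2^{\aleph_0}$.
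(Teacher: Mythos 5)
Your proof is correct and follows essentially the same route as the paper: Theorem~\ref{thm:characterization1} plus countable additivity to produce uncountably many measure-zero $F'$-types containing $\chi$, the perfect set property of analytic sets to upgrade ``uncountable'' to ``continuum,'' and the fact that a countable structure realizes only countably many $F'$-types to convert continuum-many types into continuum-many models. The only cosmetic difference is that the paper invokes Morley's observation that $S^n_{F'}(\varphi)$ is analytic and argues by contradiction, whereas you verify analyticity directly for the set of types realized at a fixed tuple (as a Borel image of $\bextent{\varphi\land\chi(\aa)}$), which is a sound and equivalent bookkeeping choice.
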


\begin{proof}
This is a consequence of Theorem~\ref{thm:characterization1} and an observation due to Morley \cite{Morley}: for any countable fragment $F$ of $\Lwow$ containing $\varphi$ and any $n\in\n$, the set $S^n_F(\varphi)$ of $F$-types consistent with $\varphi$ is an analytic subset of $2^F$. Since analytic sets have the Perfect Set Property, if $|S^n_F(\varphi)|>\aleph_0$, then $|S^n_F(\varphi)| = 2^{\aleph_0}$. And since a countable structure realizes only countably many $n$-types, if $|S^n_F(\varphi)| = 2^{\aleph_0}$, then $\varphi$ must have continuum-many countable models up to isomorphism.

Now let $\mu$ be the given properly ergodic structure, let $F$ be a countable fragment containing $\varphi$, let $F'$ and $\chi(\xx)$ be as in Theorem~\ref{thm:characterization1}, let $n$ be the length of the tuple $\xx$, and suppose for a contradiction that $|S^n_{F'}(\varphi)| \leq \aleph_0$. Let $U_\chi = \{p\in S^n_{F'}(\varphi)\mid \chi(\xx)\in p\}$. Then $U_\chi$ is countable, and, by our choice of $\chi(\xx)$, we have $\mu(p) = 0$ for all $p\in U_\chi$. Since $\mu(\extent{\varphi}) = 1$, for any tuple $\aa$ of distinct elements of $\n$, we have 
\[
	0 < \mu(\extent{\chi(\aa)}) = \mu(\extent{(\varphi\land \chi)(\aa)}) = \mu\Bigl(\bigcup_{p\in U_\chi} \extent{\theta_p(\aa)}\Bigr) = \sum_{p\in U_\chi} \mu(p),
\]
which is a contradiction, by countable additivity of $\mu$.
\end{proof}

Kechris has observed (in private communication) that Corollary~\ref{cor:continuum} also follows from a result in descriptive set theory \cite[Exercise~17.14]{Kechris}: an analogue for measure of a result of Kuratowski about category \cite{MR0461467}. However, our proof above provides additional model-theoretic information about properly ergodic structures.

Recall that $\Th(\mu)$ is the complete $\Lwow$-theory of $\mu$. As noted in Remark~\ref{rem:negscott}, $\mu$ is properly ergodic if and only if $\Th(\mu)$ has no countable models. In fact, if $\mu$ is properly ergodic, then $\Th(\mu)$ has no models at all. This is stronger, since the L\"{o}wenheim--Skolem theorem fails for complete theories of $\Lwow$.

\begin{corollary}\label{cor:nomodels}
	If $\mu$ is properly ergodic, then $\Th(\mu)$ has no models (of any cardinality). However, for any countable fragment $F$ of $\Lwow$, the theory $\Th_F(\mu)$ has continuum-many countable models up to isomorphism.
\end{corollary}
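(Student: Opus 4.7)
The plan is to prove the two claims separately, both as essentially direct consequences of the results already established in this section.

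For the first claim, I would apply Theorem~\ref{thm:characterization1} to the first-order fragment $F = \FO$ (or any other countable fragment) to obtain a countable fragment $F' \supseteq F$ and a formula $\chi(\xx) \in F'$ with $\mu(\chi(\xx)) > 0$ such that every $F'$-type $p(\xx)$ containing $\chi(\xx)$ satisfies $\mu(p) = 0$. Since $\mu(\chi(\xx)) > 0$, the sentence $\exists \xx\, \chi(\xx)$ lies in $\Th(\mu)$ (by ergodicity, as in the proof of Theorem~\ref{thm:characterization1}). On the other hand, for each $F'$-type $p(\xx)$ with $\chi \in p$, the fact that $\mu(\theta_p(\aa)) = 0$ for every tuple $\aa$ of distinct elements of $\Nats$ (and $\mu(\theta_p(\aa)) = 0$ trivially when $\aa$ has repetitions, since $p$ is non-redundant) gives, by countable additivity over $\aa$, that $\mu\bigl(\exists\xx\, \theta_p(\xx)\bigr) = 0$, so $\forall\xx\,\lnot\theta_p(\xx) \in \Th(\mu)$. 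Now suppose for contradiction that $M \modelsas \Th(\mu)$ for some structure $M$ of any cardinality. Then $M$ contains a tuple $\aa$ with $M \modelsas \chi(\aa)$; let $p = \tp_{F'}^M(\aa)$. By Definition~\ref{def:Fstuff}, $p$ is an $F'$-type containing $\chi$, so $\forall \xx\, \lnot\theta_p(\xx) \in \Th(\mu)$, contradicting $M \modelsas \theta_p(\aa)$.

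For the second claim, the key observation is that if $F$ is a countable fragment, then $\Th_F(\mu) \subseteq F$ is itself countable, so
\[
\varphi \defas \bigdoublewedge_{\psi \in \Th_F(\mu)} \psi
\]
is a single sentence of $\Lwow$. The countable models of $\varphi$ (up to isomorphism) are exactly the countable models of the $F$-theory $\Th_F(\mu)$ (up to isomorphism). Since $\mu$ is properly ergodic and $\mu \modelsas \varphi$, Corollary~\ref{cor:continuum} directly applies to give that $\varphi$ has continuum-many countable models up to isomorphism, yielding the claim.

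Both steps are essentially unpacking of earlier results, so there is no genuine obstacle; the main subtlety is simply keeping careful track of what Theorem~\ref{thm:characterization1} guarantees (namely, that the measure-zero condition holds for \emph{every} $F'$-type containing $\chi$, not merely those consistent with some preferred theory), which is exactly what makes the contradiction in the first part work for models of arbitrary cardinality, and not just countable ones as in Remark~\ref{rem:negscott}.
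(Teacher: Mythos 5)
Your proof is correct and follows essentially the same route as the paper's: the first claim is obtained by applying Theorem~\ref{thm:characterization1} to get $F'$ and $\chi$, then deriving a contradiction from the $F'$-type of any witness to $\chi$ in a putative model (the paper states $\lnot\exists\xx\,\theta_p(\xx)\in\Th(\mu)$ directly where you spell out the countable-additivity step), and the second claim is exactly Corollary~\ref{cor:continuum} applied to $\varphi = \bigdoublewedge_{\psi\in\Th_F(\mu)}\psi$.
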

\begin{proof} 
Starting with any countable fragment $F$ (e.g., $F = \mathrm{FO}$), let $F'$ and $\chi(\xx)$ be as in Theorem~\ref{thm:characterization1}. Then $\mu(\chi(\xx))>0$, so by ergodicity, $\exists\xx\, \chi(\xx)\in \Th(\mu)$. Now if $\Th(\mu)$ has a model $M$, then there is some tuple $\aa$ from $M$ satisfying $\chi(\xx)$. Let $p$ be the $F'$-type of $\aa$. Since $p$ contains $\chi(\xx)$, we have $\mu(p) = 0$, and so $\lnot \exists \xx\, \theta_p(\xx)\in \Th(\mu)$, a contradiction.

The last assertion follows from Corollary~\ref{cor:continuum}, taking $\varphi = \doublewedge_{\psi\in \Th_{F}(\mu)} \psi$. 
\end{proof}

Corollary \ref{cor:nomodels} describes a general version of two phenomena observed for the Kaleidoscope random graph in Example \ref{ex:kaleidoscopes}.

\section{Rooted models}\label{sec:rootedness}

The Morley--Scott analysis in Section~\ref{sec:analysis} showed that proper ergodicity of $\mu$ can always be explained by a positive-measure formula $\chi(\xx)$ such that any type containing $\chi(\xx)$ has measure $0$. In a countable structure sampled from $\mu$, each of these types of measure $0$ will be realized ``rarely''. Sometimes ``rarely'' means ``at most once'', as in Examples~\ref{ex:kaleidoscopes} and~\ref{ex:trivialdcl}. But in Example~\ref{ex:maxgraph}, the max graph, we saw that a type $p$ of measure $0$ can be realized by infinitely many tuples, all of which share a common element $i\in\n$. In that example, if some element $A\in 2^{\n}$ is randomly selected at a vertex $i$, then for any other vertex $j$, there is a positive probability that $\qftp(i,j)$ is the type $p_A$ encoding $A$. In other words, the fact that the type $p_A$ is realized infinitely many times is explained by the fact that $p_A$ has positive measure, after the random choice of $A$ ``living at'' the vertex $i$. In this section, we will use the Aldous--Hoover--Kallenberg theorem from \S\ref{sec:aldous-hoover} to show that this behavior is typical.

Throughout this section, let $F$ be a countable fragment of $\Lwow$, and let $T$ be an $F$-theory. We write $S^{[n]}_F(T)$ for the subspace of $S^n_F(T)$ consisting of non-redundant $F$-types on $x_0, \dots, x_{n-1}$, i.e., those which contain $x_i\neq x_j$ for all $i\neq j$.

\begin{definition}\label{def:roots}
Let $p\in S^{[n]}_F(T)$ be a type realized in $M\models T$. An element $a\in M$ is called a \defn{root} of $p$ in $M$ if $a$ is an element of every tuple realizing $p$ in $M$. We use the same terminology for quantifier-free types in $S^{[n]}_\qf(T)$.
\end{definition}

\begin{remark}\label{rem:uniquerealization}
If a type $p$ has a unique realization in $M$, then $p$ has a root in $M$ (take any element of the unique tuple realizing $p$). When $n = 1$, the converse is true: a realized type $p(x)\in S^{[1]}_F(T)$ (or $S^{[1]}_\qf(T)$) has a root in $M$ if and only if it has a unique realization in $M$.
\end{remark}

\begin{definition}\label{def:rootedness}
Let $\chi(\xx)$ is a formula in $F$ such that $\chi(\xx)\rightarrow (\bigwedge_{i \neq j} x_i\neq x_j)\in T$. Then a model $M\models T$ is \defn{$\chi$-rooted} if every type $p(\xx)\in S^{[n]}_F(T)$ which contains $\chi$ and is realized in $M$ has a root in $M$. Again, we use the same terminology for quantifier-free formulas and types.
\end{definition}

\begin{remark}\label{rem:rootedborel}
We note that the property of $\chi$-rootedness is expressible by a sentence of $\Lwow$, although not necessarily a sentence of $F$, which asserts that for every tuple $\aa$ of distinct elements satisfying $\chi(\xx)$, there is some element $a_i$ of the tuple such that every other tuple $\bb$ with the same $F$-type as $\aa$ contains $a_i$. Hence the set of $\chi$-rooted models of $T$ is a Borel set in $\Str_L$.
\end{remark}

Our goal is to prove the following theorem.

\begin{theorem}\label{thm:rootedness}
Let $\mu$ be a properly ergodic structure, $F$ a countable fragment of $\Lwow$, and $\chi(\xx)$ a formula in $F$ such that $\mu(\chi(\xx)) > 0$ and $\chi(\xx)\rightarrow (\bigwedge_{i \neq j} x_i\neq x_j) \in \Th_F(\mu)$. Suppose that $\mu(p) = 0$ for every $F$-type $p$ containing $\chi(\xx)$. Then $\mu$ assigns measure $1$ to the set of $\chi$-rooted models of $\Th_F(\mu)$.
\end{theorem}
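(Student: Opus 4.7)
My approach would combine Morleyization with the Aldous--Hoover--Kallenberg representation. First, using Theorem~\ref{thm:pithy} together with Corollary~\ref{cor:measurebijection}, I would replace $(L,F,\Th_F(\mu),\mu)$ by an equivalent $(L',\qf,T',\mu')$ in which $F$-types become quantifier-free $L'$-types and proper ergodicity is preserved; the formula $\chi$ and the notion of $\chi$-rootedness transfer across this correspondence. So assume WLOG that $F$ is the quantifier-free fragment. By Theorem~\ref{thm:ergodic}, pick an AHK representation $(f_n)$ of $\mu$ in which each $f_n$ is independent of $\xi_\emptyset$; then the qftp of an $n$-tuple $\aa$ in the sample $\fM$ is $r_\aa = f_n(\widehat{\xi}_\aa)$, and $\widehat{\xi}_\aa \perp \widehat{\xi}_{\aa'}$ for disjoint $\aa,\aa'$.

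By Remark~\ref{rem:rootedborel}, the non-$\chi$-rooted structures form a Borel set invariant under the logic action; ergodicity forces its $\mu$-measure to be $0$ or $1$. Suppose for contradiction it is positive (hence full). Failure of $\chi$-rootedness produces an $\aa$ realizing $\chi$ together with $\bb^{(0)},\ldots,\bb^{(n-1)}$ all realizing $r_\aa$ with $a_i \notin \enum{\bb^{(i)}}$. By countable additivity over the countably many shape-patterns such a configuration of indices can exhibit, fix one pattern $(\aa,\bb^{(0)},\ldots,\bb^{(n-1)})$ with $a_i \notin \enum{\bb^{(i)}}$ such that
\[
\beta \defas \Pr\bigl(r_\aa = r_{\bb^{(0)}} = \cdots = r_{\bb^{(n-1)}} \text{ and } \chi \in r_\aa\bigr) > 0.
\]

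Set $J_i \defas \enum{\aa}\cap\enum{\bb^{(i)}}$; since $a_i \notin J_i$, one has $\bigcap_i J_i \subseteq \enum{\aa}\setminus\{a_0,\dots,a_{n-1}\} = \emptyset$. The AHK analysis now goes as follows. For each $i$, the fresh randomness for $r_\aa$ beyond $\widehat{\xi}_{J_i}$ and the fresh randomness for $r_{\bb^{(i)}}$ beyond $\widehat{\xi}_{J_i}$ are disjoint, so $r_\aa$ and $r_{\bb^{(i)}}$ are conditionally independent given $\widehat{\xi}_{J_i}$. If $r_\aa = r_{\bb^{(i)}} = p$ on an event of positive probability with $p\ni\chi$, then $p$ must be a positive atom of the conditional distribution $\rho^{\aa,J_i}$ of $r_\aa$ given $\widehat{\xi}_{J_i}$: the ``by-coincidence'' contribution vanishes because the marginal $\nu = E[\rho^{\aa,J_i}]$ satisfies $\nu(\{p\}) = \mu(p) = 0$ for every $p\ni\chi$. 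A measurable selection (Jankov--von Neumann) then extracts a Borel-measurable random $\chi$-type $Q$ that is simultaneously a conditional atom across all the $J_i$; since $\bigcap_i J_i = \emptyset$, iteratively descending the conditioning through the lattice of the $J_i$'s and using dissociation at each step to kill shared atoms forces $Q$ to be an atom of the unconditional marginal $\nu$ on $\{p\ni\chi\}$, contradicting the hypothesis $\nu(\{p\}) = 0$.

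\textbf{Main obstacle.} The delicate step is the descent in Step~4: because $\bb^{(i)}$ may overlap $\aa$ and the other $\bb^{(j)}$'s intricately, one cannot simply locate two disjoint tuples among $\aa,\bb^{(0)},\ldots,\bb^{(n-1)}$ and invoke the dissociation between disjoint tuples; for $n = 2$, the ``triangle'' configuration $\aa = (a_0,a_1)$, $\bb^{(0)} = (a_1,c)$, $\bb^{(1)} = (c,a_0)$ has $J_0 = \{a_1\}$, $J_1 = \{a_0\}$, no two tuples disjoint, yet $J_0 \cap J_1 = \emptyset$. The proof must therefore handle overlapping tuples by tracking how conditional atoms of the various $\rho^{\aa,J_i}$ interact. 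For each fixed $p\ni\chi$ the first-moment identity $E[\rho^{\aa,J_i}(\{p\})] = \mu(p) = 0$ only gives vanishing a.s.\ at that $p$; the challenge is to upgrade this pointwise vanishing to ``no $\chi$-atoms at all a.s.'', which is what the dissociation across the disjoint fresh variables of the $\bb^{(i)}\setminus J_i$ ultimately delivers after a careful measurable-selection-plus-conditioning argument.
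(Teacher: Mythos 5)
Your plan lands on the same strategy as the paper's proof: Morleyize via Theorem~\ref{thm:pithy} and Corollary~\ref{cor:measurebijection} to reduce $F$-types to quantifier-free types, pass to an AHK representation with $f_n$ independent of $\xi_\emptyset$, and exploit conditional independence of $r_{\aa}$ and $r_{\bb}$ over $\widehat{\xi}_{\enum{\aa}\cap\enum{\bb}}$ to show that the finite sets on which a measure-zero $\chi$-type is realized cannot have empty common intersection. The setup through Step~3 is correct, and your observation that the ``triangle'' overlap pattern blocks any naive appeal to disjoint-tuple dissociation is exactly the right thing to worry about.

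However, what you flag as the ``main obstacle'' is the entire mathematical content of the theorem, and your proposal does not fill it. The precise statement you need is, in the paper's terminology, Lemma~\ref{lem:rooted3}: for any finite sets $A,B$ with $C = A\cap B$, \emph{almost surely} every $n$-type that is likely given $\widehat{\xi}_A$ and likely given $\widehat{\xi}_B$ is already likely given $\widehat{\xi}_C$. Getting this requires (i) the countability of the set of conditionally-likely types for any fixed conditioning values (Lemma~\ref{lem:rooted1}, proved via the observation that an uncountable family of positive-measure events must overlap, Lemma~\ref{lem:events}); (ii) the Fubini--Tonelli step that a type not likely given $\widehat{x}_D$ is almost never likely given any extension $\widehat{x}_C$, $D\subseteq C$ (Lemma~\ref{lem:rooted2}); and (iii) the non-obvious measurability of the bad event, which the paper secures by observing it is analytic and invoking Fubini for complete measures. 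Your appeal to Jankov--von Neumann selection is gesturing at (iii), and ``dissociation across disjoint fresh variables'' is gesturing at (i)+(ii), but neither is carried out, and the descent through $\bigcap_i J_i = \emptyset$ has to be performed simultaneously in $\widehat{\xi}_C$ for the uncountably many types at once, not type by type. So your proposal should be read as a correct outline that reaches, but stops at, the key lemma.

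One structural difference worth noting: you argue contrapositively by fixing a single offending configuration $(\aa,\bb^{(0)},\dots,\bb^{(n-1)})$ with $\bigcap_i J_i=\emptyset$, whereas the paper argues directly: it shows that a.s.\ the family $N(p) = \{C : p\text{ is likely given }\widehat{\xi}_C\}$ is closed under pairwise intersection, contains every $\enum{\cc}$ with $\cc\models p$, and excludes $\emptyset$, from which a root is extracted by intersecting with a fixed realizing set. The direct version avoids the countable-additivity reduction to a single shape pattern and absorbs all overlap patterns into the single closure-under-intersection statement; otherwise the two formulations carry the same content, once Lemma~\ref{lem:rooted3} is actually proved.
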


The idea of the proof is as follows: We take an AHK representation of $\mu$, sampling from which involves a family of i.i.d.\ random variables $(\xi_A)_{A\in\mathcal{P}_\fin(\n)}$. For a set $B$ with $0\leq |B|\leq n$, we say that an $n$-type $p$ is \emph{likely given} $\widehat{\xi}_B$ if after conditioning on the random variables $(\xi_A)_{A\in \mathcal{P}(B)}$, the type $p$ has a positive probability of being realized on a tuple containing all the elements of $B$ (see Definition~\ref{likelygiven-def} below).

Now for $C\subseteq B$, it happens with probability $0$ that a particular type $p$ jumps from being not likely given $\widehat{\xi}_C$ to being likely given $\widehat{\xi}_B$. As a consequence, it is almost surely the case that for every type $p$, the family of sets $N(p) = \{C\mid p \text{ is likely given }\widehat{\xi}_C\}$ is closed under intersection: given sets $A$ and $B$, the probability that the same type jumps from being not likely given $A\cap B$ to being likely given both $A$ and $B$ is $0$, since $\widehat{\xi}_A$ and $\widehat{\xi}_B$ are conditionally independent over $\widehat{\xi}_{A\cap B}$. 

Now for any type $p$ containing $\chi$, the family $N(p)$ contains all the sets on which $p$ is realized, and it does not contain $\emptyset$ (since $p$ has measure $0$, and, by ergodicity, the random variable $\xi_\emptyset$ is irrelevant) so the intersection of all sets on which $p$ is realized is almost surely nonempty, i.e., if $p$ is realized, then it almost surely has a root.

Unfortunately, the need to handle all continuum-many types uniformly introduces some technical complications in formalizing this intuitive argument. We will now tackle those technicalities.

Suppose $(f_n)_{n\in\n}$ is an AHK representation of an invariant measure $\mu$. It is a consequence of Lusin's theorem on measurable functions~\cite[Theorem 17.12]{Kechris} that every measurable function differs from a Borel function on a set of measure $0$. If $g_n$ agrees with $f_n$ almost everywhere, we may replace $f_n$ by $g_n$ in the AHK system and obtain another AHK representation of $\mu$. Hence, we may assume that each $f_n$ is Borel measurable.

We adopt the notation of \S\ref{sec:aldous-hoover} for the random variables $(\xi_A)_{A\in \mathcal{P}_{\fin}(\n)}$: for a tuple $\bb$, we will write $\widehat{\xi}_{\bb}$ to denote the family of random variables $(\xi_A)_{A\subseteq \enum{\bb}}$. Similarly, we will write $\widehat{x}_B$ as a shorthand for a family of values $(x_A)_{A\in \mathcal{P}(B)}\in [0,1]^{\mathcal{P}(B)}$. If $C\subseteq B$, we separate the family $\widehat{x}_B$ into $\widehat{x}_C = (x_A)_{A\in \mathcal{P}(C)}$ and $\widehat{x}_{B/C} = (x_A)_{A\in \mathcal{P}(B)\setminus \mathcal{P}(C)}$. If $B$ is enumerated as a tuple $\bb$, we will write $\widehat{x}_{\bb}$.

\begin{definition}
Let $p\in S^{[n]}_{\qf}(L)$, and let $B\in \mathcal{P}_{\fin}(\n)$, with $|B| = n$. Fix values $\widehat{x}_B\in [0,1]^{\mathcal{P}(B)}$. We say that $p$ \defn{is realized given $\widehat{x}_B$} if there is some enumeration of $B$ as a tuple $\bb$ such that $p = f_n(\widehat{x}_{\bb})$.
\end{definition}

While the particular type $f_n(\widehat{x}_{\bb})$ depends on the order in which $B$ is enumerated as a tuple, the set of types which are realized given $\widehat{x}_B$ does not depend on the order. Since there are $n!$ ways of enumerating $B$ as a tuple, at most $n!$ quantifier-free types are realized given $\widehat{x}_B$. Recall that part of the definition of an AHK system is that these types almost surely form an orbit under the action of $\sympar{n}$ on $S^{[n]}_\qf(L)$ by permuting variables. 

The set $R(B) = \{(p,\widehat{x}_B)\mid p\text{ is realized given }\widehat{x}_B\}\subseteq S^{[n]}_\qf(L)\times [0,1]^{\mathcal{P}(B)}$ is Borel. Indeed, $$R(B) = \bigcup_{\bb\text{ enumerating } B}\{(f_n(\widehat{x}_{\bb}),\widehat{x}_B)\mid \widehat{x}_B\in [0,1]^{\mathcal{P}(B)}\},$$
and the graph of a Borel function is a Borel set.

Let $C\in \mathcal{P}_{\fin}(\n)$, with $0\leq |C| \leq n$, and pick some $C\subseteq B$ with $|B| = n$. Let $1_{R(B)}$ be the indicator function of the event $R(B)$. The Fubini--Tonelli theorem for Borel measurable functions~\cite[Theorem 1.7.15]{Tao} tells us that for all $p\in S^{[n]}_\qf(L)$ and $\widehat{x}_C\in [0,1]^{\mathcal{P}(C)}$, the integral 
$$\int_{\widehat{x}_{B/C}\in [0,1]^{\mathcal{P}(B)\setminus\mathcal{P}(C)}}1_{R(B)}(p,\widehat{x}_C,\widehat{x_{B/C}})\,d\lambda_0^{\mathcal{P}(B)\setminus\mathcal{P}(C)}$$ 
is defined (here $\lambda_0$ is the Lebesgue measure on $[0,1]$, restricted to the Borel $\sigma$-algebra), and that the function 
$$P_C\colon (p,\widehat{x}_C)\mapsto \int_{\widehat{x}_{B/C}\in [0,1]^{\mathcal{P}(B)\setminus\mathcal{P}(C)}}1_{R(B)}(p,\widehat{x}_C,\widehat{x}_{B/C})\,d\lambda_0^{\mathcal{P}(B)\setminus\mathcal{P}(C)}$$
is Borel measurable. Abusing terminology somewhat, we call $P_C(p,\widehat{x}_C)$ the \defn{probability of $p$ given $\widehat{x}_C$}.

Observe that the definition of $P_C$ is independent of the choice of $B$, since if $C\subseteq B'$ and $f\colon B\to B'$ is a bijection fixing $C$, the induced map $S^{[n]}_\qf(L)\times [0,1]^{\mathcal{P}(B)}\to S^{[n]}_\qf(L)\times [0,1]^{\mathcal{P}(B')}$ carries $R(p,B)$ to $R(p,B')$.

\begin{definition}
	\label{likelygiven-def}
With notation as above, we say that $p$ is \defn{likely given $\widehat{x}_C$} if the probability of $p$ given $\widehat{x}_C$ is positive. For fixed values $\widehat{x}_C$, we denote by $S(\widehat{x}_C)$ the set of non-redundant quantifier-free $n$-types which are likely given $\widehat{x}_C$.
\end{definition}

We'd like to show that $S(\widehat{x}_C)$ is always countable. We'll need the following basic measure theory lemma.

\begin{lemma}\label{lem:events}
Let $(\Omega,\mathcal{F},\nu)$ be a probability space, and let $(E_i)_{i\in I}$ be an uncountable family of events, each of positive measure. Then there is some $x\in \Omega$ such that $x$ is in infinitely many of the $E_i$.
\end{lemma}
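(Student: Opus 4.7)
The plan is a pigeonhole argument followed by continuity of measure from above. First, I would partition the index set by measure lower bounds: set $I_n = \{i \in I : \nu(E_i) \geq 1/n\}$ for each $n \geq 1$. Since each $E_i$ has positive measure, $I = \bigcup_{n \geq 1} I_n$; and since $I$ is uncountable while a countable union of countable sets is countable, some $I_n$ must be infinite (in fact uncountable). Fix such an $n$ and extract a countably infinite sequence of distinct indices $i_1, i_2, \ldots \in I_n$.

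Next, I would consider the tail unions $F_k = \bigcup_{j \geq k} E_{i_j}$, which form a decreasing sequence of measurable sets, each satisfying $\nu(F_k) \geq \nu(E_{i_k}) \geq 1/n$. By continuity of measure from above,
\[
\nu\Bigl(\bigcap_{k \geq 1} F_k\Bigr) \;=\; \lim_{k \to \infty} \nu(F_k) \;\geq\; \frac{1}{n} \;>\; 0,
\]
so the intersection is nonempty. But $\bigcap_{k \geq 1} F_k$ is exactly the set of $x \in \Omega$ that belong to $E_{i_j}$ for infinitely many $j$, so any such $x$ witnesses the conclusion of the lemma.

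I do not expect any real obstacle. The only nontrivial step is the pigeonhole reduction that turns the uncountability of $I$ into a uniform positive lower bound on the measures of a countably infinite subfamily; after that, countable additivity of $\nu$ (manifested as continuity of measure along decreasing sequences) finishes the job. It is worth remarking that uncountability of $I$ is used only weakly here: the same proof shows that whenever some $I_n$ is infinite, there exists a point lying in infinitely many $E_i$.
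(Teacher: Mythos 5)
Your proof is correct and follows essentially the same route as the paper: a pigeonhole step extracting a countably infinite subfamily with a uniform positive lower bound on measure, followed by tail unions and continuity of measure from above to show the limsup set has positive measure and is therefore nonempty. The only cosmetic difference is that the paper phrases the pigeonhole via a single $\varepsilon>0$ rather than the partition $I_n=\{i:\nu(E_i)\geq 1/n\}$, which is the same argument.
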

\begin{proof}
Since there are uncountably many events in the family, there is some $\varepsilon>0$ such that infinitely many have measure at least $\varepsilon$. Let $\{E_{i_n}\mid n\in \n\}$ be a countable sequence with $\nu(E_{i_n})\geq \varepsilon$ for all $n$. Then define $E'_N = \bigcup_{n\geq N} E_{i_n}$, for $n\in N$. We have $\nu(E'_N)\geq \varepsilon$. By continuity, $\nu(\bigcap_{N\in \n} E'_N) \geq \varepsilon$, so there is some $x\in \bigcap_{N\in \n} E'_N$. This $x$ is in infinitely many of the $E_{i_n}$. 
\end{proof}

\begin{lemma}\label{lem:rooted1} For any set $C$ with $0\leq |C|\leq n$, and any $\widehat{x}_C\in [0,1]^{\mathcal{P}(C)}$, the set $S(\widehat{x}_C)$ is countable.
\end{lemma}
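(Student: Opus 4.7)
The plan is to argue by contradiction using Lemma~\ref{lem:events}, exploiting the fact that for any fixed $\widehat{x}_B$ only finitely many types are realized. Fix some $B \supseteq C$ with $|B| = n$. For each $p\in S^{[n]}_\qf(L)$, define the event
\[
    E_p = \{\widehat{x}_{B/C}\in [0,1]^{\mathcal{P}(B)\setminus \mathcal{P}(C)} \mid (p,\widehat{x}_C,\widehat{x}_{B/C})\in R(B)\},
\]
which is Borel because $R(B)$ is Borel. By definition of $P_C$, we have
\[
    \lambda_0^{\mathcal{P}(B)\setminus\mathcal{P}(C)}(E_p) = P_C(p,\widehat{x}_C),
\]
so $p \in S(\widehat{x}_C)$ if and only if $E_p$ has positive measure.

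Now suppose toward a contradiction that $S(\widehat{x}_C)$ is uncountable. Applying Lemma~\ref{lem:events} to the probability space $([0,1]^{\mathcal{P}(B)\setminus\mathcal{P}(C)}, \lambda_0^{\mathcal{P}(B)\setminus\mathcal{P}(C)})$ and the uncountable family $(E_p)_{p\in S(\widehat{x}_C)}$ of positive-measure events, we obtain some $\widehat{x}_{B/C}$ that lies in infinitely many of the $E_p$. Setting $\widehat{x}_B = (\widehat{x}_C,\widehat{x}_{B/C})$, this means that infinitely many distinct non-redundant quantifier-free $n$-types are realized given $\widehat{x}_B$.

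This contradicts the observation made right after the definition of ``realized given $\widehat{x}_B$'': since $B$ admits only $n!$ enumerations as a tuple $\bb$, and since a type realized given $\widehat{x}_B$ must be of the form $f_n(\widehat{x}_{\bb})$ for one of these enumerations, at most $n!$ types are realized given any fixed $\widehat{x}_B$. Hence $S(\widehat{x}_C)$ must be countable. The only step requiring any care is the straightforward verification that $E_p$ is Borel and that its measure equals $P_C(p,\widehat{x}_C)$, both of which are immediate from the Fubini--Tonelli setup already established before the statement of the lemma; the essential content is simply the pigeonhole-style application of Lemma~\ref{lem:events}.
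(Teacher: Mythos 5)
Your proof is correct and is essentially identical to the paper's: both fix $B \supseteq C$ with $|B|=n$, define the events $E_p$, observe that each $\widehat{x}_{B/C}$ lies in at most $n!$ of them, and invoke Lemma~\ref{lem:events}. The only cosmetic difference is that you phrase the step as a proof by contradiction while the paper applies the contrapositive of Lemma~\ref{lem:events} directly.
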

\begin{proof}
Pick some $C\subseteq B$ with $|B| = n$. A type $p$ is likely given $\widehat{x}_C$ if and only if the event $E_p = \{\widehat{x}_{B/C}\mid (p,\widehat{x}_C,\widehat{x}_{B/C})\in R(B)\}\subseteq [0,1]^{\mathcal{P}(B)\setminus \mathcal{P}(C)}$ has positive measure. Since any point $\widehat{x}_{B/C}$ is in at most $n!$ of the events $E_p$ (corresponding to the types $f_n(\widehat{x}_{\bb})$ for the $n!$ enumerations of $B$ as a tuple), by Lemma~\ref{lem:events}, the set $S(\widehat{x}_C) = \{p\mid E_p\text{ has positive measure}\}$ is countable.
\end{proof}

\begin{lemma}\label{lem:rooted2}
For sets $D\subseteq C$ with $0\leq |D|\leq |C|\leq n$, and any $\widehat{x}_D\in [0,1]^{\mathcal{P}(D)}$, if $p$ is not likely given $\widehat{x}_D$, then the set $\{\widehat{x}_{C/D} \mid  p \text{ is likely given }\widehat{x}_C\}$ has measure $0$ in $[0,1]^{\mathcal{P}(C)\setminus\mathcal{P}(D)}$. 
\end{lemma}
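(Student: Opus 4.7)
The plan is to reduce this to a standard application of the Fubini--Tonelli theorem. First, I would choose a superset $B$ with $C \subseteq B$ and $|B| = n$, and observe that $\mathcal{P}(B) \setminus \mathcal{P}(D)$ decomposes as the disjoint union of $\mathcal{P}(C) \setminus \mathcal{P}(D)$ and $\mathcal{P}(B) \setminus \mathcal{P}(C)$. Applying Fubini--Tonelli to the definition of $P_D(p, \widehat{x}_D)$ and recognizing the inner integral as $P_C$, we obtain
\[
P_D(p, \widehat{x}_D) = \int_{[0,1]^{\mathcal{P}(C) \setminus \mathcal{P}(D)}} P_C(p, \widehat{x}_D, \widehat{x}_{C/D}) \, d\lambda_0^{\mathcal{P}(C) \setminus \mathcal{P}(D)}.
\]

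Next, I would use the hypothesis that $p$ is not likely given $\widehat{x}_D$, i.e., $P_D(p, \widehat{x}_D) = 0$. Since $P_C$ is a non-negative Borel measurable function (by the Fubini--Tonelli statement quoted before Definition~\ref{likelygiven-def}) whose integral over $[0,1]^{\mathcal{P}(C) \setminus \mathcal{P}(D)}$ vanishes, we conclude that $P_C(p, \widehat{x}_D, \widehat{x}_{C/D}) = 0$ for $\lambda_0^{\mathcal{P}(C) \setminus \mathcal{P}(D)}$-almost every $\widehat{x}_{C/D}$.

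Finally, by definition, $p$ is likely given $\widehat{x}_C = (\widehat{x}_D, \widehat{x}_{C/D})$ precisely when $P_C(p, \widehat{x}_C) > 0$. Therefore the set $\{\widehat{x}_{C/D} \mid p \text{ is likely given } \widehat{x}_C\}$ is contained in the null set $\{\widehat{x}_{C/D} \mid P_C(p, \widehat{x}_D, \widehat{x}_{C/D}) > 0\}$, and hence has measure $0$, as required. There is no real obstacle here beyond careful bookkeeping of the product decomposition $\widehat{x}_{B/D} = (\widehat{x}_{C/D}, \widehat{x}_{B/C})$ and an appeal to the fact that a non-negative measurable function with integral zero vanishes almost everywhere.
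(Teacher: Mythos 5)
Your proposal is correct and is essentially the paper's own argument: the paper likewise picks $B\supseteq C$ with $|B|=n$, uses Fubini--Tonelli to write the vanishing integral defining $P_D(p,\widehat{x}_D)$ as an iterated integral whose inner integral over $\widehat{x}_{B/C}$ is exactly $P_C(p,\widehat{x}_C)$, and concludes that this inner integral is $0$ for almost every $\widehat{x}_{C/D}$. The only cosmetic difference is that you name the inner integral as $P_C$ up front rather than identifying it at the end.
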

\begin{proof} This is just the Fubini--Tonelli theorem. Pick some $C\subseteq B$ with $|B| = n$ (recall that $p$ is an $n$-type). Since $p$ is not likely given $\widehat{x}_D$, we have 
\begin{align*}
0 &= \int_{\widehat{x}_{B/D}}1_{R(B)}(p,\widehat{x}_D,\widehat{x}_{B/D})\,d\lambda_0^{\mathcal{P}(B)\setminus\mathcal{P}(D)}\\
&= \int_{\widehat{x}_{C/D}}\Bigl(\int_{\widehat{x}_{B/C}}1_{R(B)}(p,\widehat{x}_D,\widehat{x}_{C/D},\widehat{x}_{B/C})\,d\lambda_0^{\mathcal{P}(C)\setminus\mathcal{P}(D)}\Bigr)d\lambda_0^{\mathcal{P}(B)\setminus\mathcal{P}(C)}.
\end{align*}
And the interior integral, which is $0$ for almost all values of $\widehat{x}_{C/D}$, is the probability of $p$ given $\widehat{x}_C$.
\end{proof}

\begin{lemma}\label{lem:rooted3}
Fix sets $A,B\in \mathcal{P}_\fin(\n)$, with $0\leq |A|\leq n$ and $0\leq |B|\leq n$, and let $C = A\cap B$. Let $\widehat{\xi}_C$, $\widehat{\xi}_A = (\widehat{\xi}_C,\widehat{\xi}_{A/C})$ and $\widehat{\xi}_B = (\widehat{\xi}_C,\widehat{\xi}_{B/C})$ be our random variables on these sets. Almost surely, every quantifier-free type which is likely given $\widehat{\xi}_A$ and likely given $\widehat{\xi}_B$ is likely given $\widehat{\xi}_C$. That is, $S(\widehat{\xi}_C)\subseteq S(\widehat{\xi}_A)\cap S(\widehat{\xi}_B)$.
\end{lemma}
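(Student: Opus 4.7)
I read the verbal statement of the lemma as the intended content --- namely that almost surely $S(\widehat{\xi}_A)\cap S(\widehat{\xi}_B)\subseteq S(\widehat{\xi}_C)$; the displayed inclusion appears to have the containment reversed, which would contradict the heuristic discussion preceding the lemma and would already be false in Example~\ref{ex:maxgraph}. The plan is to combine Lemma~\ref{lem:rooted2} with the fact that $\widehat{\xi}_{A/C}$ and $\widehat{\xi}_{B/C}$ are independent families of coordinates. A naive union bound is impossible because $S^{[n]}_\qf(L)$ is uncountable, so the crucial technical step is to turn the random --- but almost surely countable --- set $S(\widehat{\xi}_A)$ into a deterministic countable family of Borel partial functions.

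First I would produce such an enumeration. Since $P_A$ is Borel by the Fubini--Tonelli discussion preceding Definition~\ref{likelygiven-def}, the set
\[
	\mathrm{Gr} = \{(\widehat{x}_A,p) \in [0,1]^{\mathcal{P}(A)}\times S^{[n]}_\qf(L) \st P_A(p,\widehat{x}_A) > 0\}
\]
is Borel, and by Lemma~\ref{lem:rooted1} each of its fibres over $\widehat{x}_A$ is countable. The Luzin--Novikov uniformization theorem (see, e.g., \cite[Theorem 18.10]{Kechris}) then expresses $\mathrm{Gr}$ as a countable union of graphs of Borel partial functions $p_i\colon [0,1]^{\mathcal{P}(A)}\to S^{[n]}_\qf(L)$, so that $S(\widehat{\xi}_A) = \{p_i(\widehat{\xi}_A) \st i\in\n \text{ and } p_i(\widehat{\xi}_A)\text{ is defined}\}$.

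Next, for each $i\in\n$, let $F_i$ be the event that $p_i(\widehat{\xi}_A)$ is defined, lies in $S(\widehat{\xi}_B)$, and does \emph{not} lie in $S(\widehat{\xi}_C)$. I would show $P(F_i)=0$ by conditioning on $\widehat{\xi}_A$: both $q := p_i(\widehat{\xi}_A)$ and $\widehat{\xi}_C = \widehat{\xi}_A|_{\mathcal{P}(C)}$ are then determined. Since $\mathcal{P}(A)\cap \mathcal{P}(B)=\mathcal{P}(C)$, the coordinates $\widehat{\xi}_{B/C}$ are disjoint from those of $\widehat{\xi}_A$, hence independent of $\widehat{\xi}_A$; so the conditional law of $\widehat{\xi}_B$ given $\widehat{\xi}_A$ coincides with the conditional law of $\widehat{\xi}_B$ given $\widehat{\xi}_C$. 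On the subevent $\{q \notin S(\widehat{\xi}_C)\}$, Lemma~\ref{lem:rooted2} (applied with its $D$ being our $C$ and its $C$ being our $B$) yields
\[
	P\bigl(q \in S(\widehat{\xi}_B) \bigm| \widehat{\xi}_A\bigr) = P\bigl(q \in S(\widehat{\xi}_B) \bigm| \widehat{\xi}_C\bigr) = 0,
\]
and integration gives $P(F_i)=0$. The event $\{S(\widehat{\xi}_A)\cap S(\widehat{\xi}_B)\not\subseteq S(\widehat{\xi}_C)\}$ equals $\bigcup_{i\in\n}F_i$ by the enumeration, so countable additivity closes the argument.

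The main obstacle is the measurable selection step. Merely noting that for each fixed $p$ the bad event $\{p\in S(\widehat{\xi}_A)\setminus S(\widehat{\xi}_C)\}$ is null does not suffice, since the corresponding union over $p$ is over an uncountable space; and the vertical-slice-null property of the ``bad'' set in $[0,1]^{\mathcal{P}(A)}\times S^{[n]}_\qf(L)$ does not by itself control the measure of its projection (witness the diagonal in $[0,1]^2$). Converting the random countable set $S(\widehat{\xi}_A)$ into a countable sequence of Borel partial functions via Luzin--Novikov is what lets us harvest Lemma~\ref{lem:rooted2} together with conditional independence and conclude by countable additivity.
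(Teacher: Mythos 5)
Your proof is correct, and your reading of the statement is the right one: the displayed inclusion is a typo, and both the paper's own proof and its use in Theorem~\ref{thm:rootedness} (closure of the family $\{C \mid p\in S(\widehat{\xi}_C)\}$ under intersection) concern the verbal version $S(\widehat{\xi}_A)\cap S(\widehat{\xi}_B)\subseteq S(\widehat{\xi}_C)$. Your argument shares its probabilistic core with the paper's --- condition on $\widehat{\xi}_A$, note that only the countably many types in $S(\widehat{\xi}_A)$ are in play (Lemma~\ref{lem:rooted1}), and kill each such type that is not likely given $\widehat{\xi}_C$ by Lemma~\ref{lem:rooted2} together with the independence of $\widehat{\xi}_{B/C}$ from $\widehat{\xi}_A$ --- but it resolves the measurability obstacle by a genuinely different device. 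The paper forms the Borel set of tuples $(p,\widehat{x}_C,\widehat{x}_{A/C},\widehat{x}_{B/C})$ such that $p$ is likely given $\widehat{x}_A$ and $\widehat{x}_B$ but not $\widehat{x}_C$, projects out $p$ to obtain the analytic (hence measurable) set $X^{A,B}$, and then applies the Fubini--Tonelli theorem for complete measures, the fibers over $\widehat{x}_A$ being null by exactly your countable-union computation. You instead apply Luzin--Novikov uniformization to replace the random countable set $S(\widehat{\xi}_A)$ by a deterministic countable family of Borel partial selectors $p_i$, so that each exceptional event $F_i$ is Borel and ordinary Fubini--Tonelli plus countable additivity finish the proof; this avoids analytic sets and complete-measure Fubini at the cost of invoking a uniformization theorem. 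Both routes are sound, and yours is arguably cleaner about measurability, while the paper's uses only the softer fact that analytic sets are Lebesgue measurable. One small caveat: your parenthetical claim that the reversed inclusion already fails in Example~\ref{ex:maxgraph} is right, but only for a suitable choice of sets (e.g.\ $A=C=\{i\}$, $B=\{i,j\}$); with $C=\emptyset$ the left-hand side there is empty, so that instance is vacuous. This does not affect the proof itself.
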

\begin{proof}
Observe that for any set $D$, the set $\{(p,\widehat{x}_D)\mid p\text{ is likely given }\widehat{x}_D\}\subseteq S^{[n]}_\qf(L)\times [0,1]^{\mathcal{P}(D)}$ is Borel. Indeed, it is the preimage of $(0,1]$ under the Borel function $P_D$. It follows that the set $$\{(p,\widehat{x}_C,\widehat{x}_{A/C},\widehat{x}_{B/C})\mid p\text{ is likely given $\widehat{x}_A$ and $\widehat{x}_B$ but not $\widehat{x}_C$}\}$$ is a Borel subset of $S^{[n]}_\qf(L)\times [0,1]^{\mathcal{P}(C)}\times [0,1]^{\mathcal{P}(A)\setminus \mathcal{P}(C)}\times [0,1]^{\mathcal{P}(B)\setminus \mathcal{P}(C)}$. Projecting out the first coordinate, we see that the set $$X^{A,B} = \{(\widehat{x}_C,\widehat{x}_{A/C},\widehat{x}_{B/C})\mid \text{some $p$ is likely given $\widehat{x}_A$ and $\widehat{x}_B$ but not $\widehat{x}_C$}\}$$
is analytic, and hence measurable (see \cite[Theorem 21.10]{Kechris}). Since our random variables are i.i.d.\ Lebesgue on $[0,1]$, we would like to show that $X^{A,B}$ has measure $0$ with respect to the Lebesgue measure on $[0,1]^{\mathcal{P}(C)}\times [0,1]^{\mathcal{P}(A)\setminus \mathcal{P}(C)}\times [0,1]^{\mathcal{P}(B)\setminus \mathcal{P}(C)}$. The knowledge that $X^{A,B}$ is measurable enables us to analyze its measure fiber-wise, using the Fubini--Tonelli theorem (the version for complete measures this time, \cite[Theorem 1.7.18]{Tao}).

So consider the fiber $X^{A,B}_{\widehat{x}_A}$ over $\widehat{x}_A = (\widehat{x}_C,\widehat{x}_{A/C})\in [0,1]^{\mathcal{P}(C)}\times [0,1]^{\mathcal{P}(A)\setminus \mathcal{P}(C)}$. 
\begin{align*}
X^{A,B}_{\widehat{x}_A} &= \{\widehat{x}_{B/C}\mid \text{some $p$ is likely given $\widehat{x}_A$ and $\widehat{x}_B$ but not $\widehat{x}_{C}$}\}\\
&= \bigcup_{p\in S(\widehat{x}_A)\setminus S(\widehat{x}_{C})} \{\widehat{x}_{B/C}\mid p\text{ is likely given $\widehat{x}_B$}\}
\end{align*}

By Lemma~\ref{lem:rooted1}, this is a countable union, and by Lemma~\ref{lem:rooted2}, each set in the union has measure $0$, so $X^{A,B}_{\widehat{x}_A}$ has measure $0$, and, by Fubini--Tonelli, $X^{A,B}$ has measure $0$.
\end{proof}

\begin{proof}[Proof of Theorem~\ref{thm:rootedness}] 
We have a properly ergodic structure $\mu$, a countable fragment $F$ of $\Lwow$, and a distinguished formula $\chi(\xx)$ in $F$. Let $n$ be the length of the tuple $\xx$. Let $L'$, $T'$, and $Q$ be the language, $\Pi_2^-$ theory, and countable set of partial quantifier-free types, respectively obtained from Theorem~\ref{thm:pithy} for the fragment $F$ and empty theory $T$. By Corollary~\ref{cor:measurebijection}, $\mu$ corresponds to an ergodic $L'$-structure $\mu'$, concentrated on those models of $T'$ that omit all the types in $Q$. 

For such models, each formula $\varphi(\yy)$ in $F$ is equivalent to the atomic $L'$-formula $R_\varphi(\yy)$, so we have $\mu'(R_\chi(\xx))>0$, and for every quantifier-free type $q$ containing $R_\chi(\xx)$, we have $\mu'(q) = 0$. It suffices to show that an $L'$-structure $\fM$ sampled from $\mu'$ is almost surely $R_\chi(\xx)$-rooted with respect to quantifier-free types. 

By Theorem~\ref{thm:aldous-hoover}, $\mu'$ has an AHK representation $(f_m)_{m\in \n}$. And since $\mu'$ is ergodic, by Theorem~\ref{thm:ergodic}, we can pick the functions $f_m$ so they do not depend on the argument indexed by $\emptyset$. As a consequence, $S(\widehat{\xi}_\emptyset) = \{p\mid \mu'(p)>0\}$. Indeed, the probability of a type $p$ given $\widehat{\xi}_\emptyset$ is obtained by integrating out all of the variables except $\xi_\emptyset$, which is irrelevant to $f_n$, so it is simply the probability that $p$ is realized on an arbitrary set of size $n$.

On the other hand, if $|B| = n$, then $S(\widehat{\xi}_B)$ is almost surely equal to the set of quantifier-free types realized on the $n!$ tuples $\bb$ enumerating $B$. Indeed, the probability of a type $p$ given $\widehat{\xi}_B$ is simply the indicator function $1_{R(B)}$ (no variables are integrated out).

Now for any tuple $\bb$, letting $B = \enum{\bb}$, if $\bb$ satisfies a type $p$ containing $R_\chi$, then almost surely the family of sets $C$ such that $p\in S(\widehat{\xi}_C)$ contains $B$, does not contain $\emptyset$, and is closed under intersection (by Lemma~\ref{lem:rooted3}). In particular, the intersection of this family is non-empty. Therefore, almost surely, $\fM$ is $\chi$-rooted.
\end{proof}

We conclude this section with a discussion of the tension between rootedness and trivial definable closure.

Let $M$ be a $\chi$-rooted model of an $F$-theory $T$. Suppose that $p(\xx)\in S^{[n]}_F(T)$ contains $\chi(\xx)$ and is realized in $M$, and let $a$ be a root of $p$ in $M$. If $M\models p(a,\bb)$, then $a$ is the unique element of $M$ satisfying $p(x,\bb)$, since if $c\neq a$, then $a$ is not in $c\bb$, so $c\bb$ does not realize $p$. This implies that $M$ has non-trivial group-theoretic definable closure, since every automorphism of $M$ fixing $\bb$ also fixes $a$. Note that $T$ may still have trivial definable closure, since $p$ is an $F$-type and, in general, is not equivalent to a formula in $F$.

We can conclude, however, that if an $F$-theory $T$ with trivial definable closure has a $\chi$-rooted model, then no non-redundant type that contains $\chi$ is isolated. Thus isolated types are not dense in $S^n_F(T)$. By standard facts about model theory in countable fragments of $\Lwow$ (see \cite{MR2062240}), this implies that $T$ does not have a prime model with respect to $F$-elementary embeddings, and that there are continuum-many types in $S^n_F(T)$ containing $\chi(\xx)$.

Theorem~\ref{thm:rootedness} implies that the theory of every properly ergodic structure $\mu$ exhibits this behavior: using Theorem~\ref{thm:characterization1} to obtain a countable fragment $F$ and an $F$-formula $\chi(\xx)$ of positive measure such that every type containing $\chi$ has measure $0$, the theory $\Th_F(\mu)$ has many $\chi$-rooted models and (by Theorem~\ref{thm:trivialdcl}) trivial dcl. 

Of course, given any particular $F$-type $p$ containing $\chi(\xx)$, we can try to bring $\chi$-rootedness into direct conflict with trivial dcl by moving to a larger countable fragment $F'$ which contains the formula $\theta_p(\xx)\defas \doublewedge_{\varphi\in p} \varphi(\xx)$ isolating $p$. But since $p$ has measure $0$, the theory $\Th_{F'}(\mu)$ contains the sentence $\forall\xx\,\lnot\theta_p(\xx)$, ruling out troublesome realizations of $p$. 

Of course, a countable fragment $F'$ can only isolate and rule out countably many of the continuum-many types of measure $0$ containing $\chi(\xx)$. For example, given the kaleidoscope random graph (Example~\ref{ex:kaleidoscopes}), we could extend from the first-order fragment to a countable fragment $F$ of $\Lwow$ containing some of the conjunctions $\doublewedge_{n\in A} xR_n y \wedge \doublewedge_{n\notin A} \lnot xR_n y$, for $A\in 2^\n$. Then the theory $\Th_{F'}(\mu)$ is essentially the same as $\Th_{\FO}(\mu)$, but with countably many of the continuum-many quantifier-free $2$-types forbidden.

\section{Constructing properly ergodic structures}\label{sec:construction}

In this section, given an $F$-theory $T$ having trivial dcl, we will use a single $\chi$-rooted model $M$ of $T$ to construct a properly ergodic model of $T$. The strategy is to build a Borel structure $\bbM$ equipped with a probability measure $\nu$, via an inverse limit of finite probability spaces. We use $M$ as a guide in the construction to ensure that $\bbM$ is also $\chi$-rooted. Then our ergodic structure $\mu$ will be obtained by i.i.d.\ sampling of countably many points from $\bbM$ according to $\nu$ and taking the induced substructure.

Having built the Borel structure $\bbM$, we proceed to rescale $\nu$, using a technique from~\cite{AFKP}, to obtain not just one but continuum-many properly ergodic structures concentrating on $T$.

\begin{definition}\label{def:borelstructure}
A \defn{Borel structure} $\bbM$ is an $L$-structure whose domain is a standard Borel space such that for every relation symbol $R$ of arity $\ar(R)$ in $L$, the subset $R\subseteq M^{\ar(R)}$ is Borel. A \defn{measured structure} is a Borel structure $\bbM$ equipped with an atomless probability measure $\nu$.
\end{definition}

Given a measured structure $(\bbM, \nu)$, there is a canonical measure $\mu_{\bbM,\nu}$ on $\Str_L$, obtained by sampling a countable $\nu$-i.i.d.\ sequence (of almost surely distinct points) from $\bbM$ and taking the induced substructure.
Somewhat more formally, $\mu_{\bbM,\nu}$ is the distribution of a random structure in $\Str_L$ whose atomic diagram on $\n$ is given by that of the random substructure of $\bbM$ with underlying set $\{a_i\mid i\in \n\}$, where $(a_i)_{i\in\Nats}$ is a $\nu$-i.i.d.\ sequence of (almost surely unique) elements in $\bbM$.

We now describe an AHK representation of the measures $\mu_{\bbM,\nu}$ in the sense of \S\ref{sec:aldous-hoover}.
Choose a measure-preserving Borel isomorphism $h$ from $[0,1]$ equipped with the uniform measure to the domain of $\bbM$ equipped with $\nu$, and 
for each $n$ let $\star_n$ be an arbitrary element of $S^{[n]}_\qf(L)$.
Then define functions $f_n\colon [0,1]^{\mathcal{P}_{\fin}([n])}\to S^{[n]}_\qf(L)$ by 
\[
	f_n\bigl((\xi_A)_{A\subseteq [n]}\bigr) = 
\begin{cases}
    \qftp(h({\xi_{\{0\}}}),\dots,h({\xi_{\{n-1\}}})) & \text{if } \xi_{\{i\}} \neq \xi_{\{j\}} \text{ for }i < j \in [n];\\
\star_n& \text{otherwise.}
\end{cases}
\]

Informally, these functions ignore the random variables $\xi_A$ when $|A|\neq 1$ and view the $(\xi_{\{a\}})_{a\in \n}$ as independent random variables with distribution $\nu$ taking their values in $\bbM$.

Now $(f_n)_{n\in \n}$ is an AHK system, so it induces an invariant measure 
on $\Str_L$. This measure is clearly the same as $\mu_{\bbM,\nu}$ described above via sampling of a random substructure. Since the $f_n$ do not depend on the argument indexed by $\emptyset$, the measure $\mu_{\bbM,\nu}$ is ergodic (Theorem~\ref{thm:ergodic}), which establishes the following lemma.

\begin{lemma}\label{lem:borelsampling}
Given a measured structure $(\bbM,\nu)$, the measure $\mu_{\bbM,\nu}$ on $\Str_L$ is an ergodic structure.
\end{lemma}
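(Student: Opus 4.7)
The plan is to exhibit $\mu_{\bbM,\nu}$ as induced by an AHK system $(f_n)_{n\in\n}$ in which $f_n$ does not depend on the argument indexed by $\emptyset$, and then invoke the implication (3)$\Rightarrow$(1) of Theorem~\ref{thm:ergodic}.

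First I would fix a measure-preserving Borel isomorphism $h$ from $[0,1]$ with Lebesgue measure to $(\bbM,\nu)$, which exists because $(\bbM,\nu)$ is an atomless standard Borel probability space. I would then define $f_n\colon [0,1]^{\mathcal{P}([n])}\to S^{[n]}_\qf(L)$ exactly as in the paragraph preceding the lemma statement: on the full-measure set where the singleton coordinates $(\xi_{\{i\}})_{i\in[n]}$ are pairwise distinct, let $f_n$ return the quantifier-free type of $(h(\xi_{\{0\}}),\dots,h(\xi_{\{n-1\}}))$ in $\bbM$, and return a fixed default $\star_n$ otherwise. Borel measurability of each $f_n$ follows from the fact that each relation of $\bbM$ is Borel and $h$ is Borel.

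Next I would verify the two coherence conditions for an AHK system. The symmetry condition under $\sympar{n}$ is immediate, since a permutation of the singleton arguments corresponds to the same permutation of the arguments of $h$, which is exactly the $\sympar{n}$-action on the resulting non-redundant quantifier-free type. The nesting condition holds because, on the event that all $(\xi_{\{i\}})_{i< n}$ are distinct, the $m$-variable quantifier-free type of $(h(\xi_{\{0\}}),\dots,h(\xi_{\{m-1\}}))$ is the restriction of the $n$-variable quantifier-free type of $(h(\xi_{\{0\}}),\dots,h(\xi_{\{n-1\}}))$. Then I would check that the invariant Borel measure induced by $(f_n)$ via Proposition~\ref{prop:stonemeasure} agrees with $\mu_{\bbM,\nu}$ as defined by sampling; this is a direct Fubini computation on clopen cylinders $\extent{\varphi(\aa)}$, since both measures assign this cylinder the probability that $\varphi$ holds of an i.i.d.\ $\nu$-sample from $\bbM$.

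Finally, since by construction each $f_n$ depends only on the singleton coordinates, the coordinate indexed by $\emptyset$ is ignored, so condition (3) of Theorem~\ref{thm:ergodic} is verified and $\mu_{\bbM,\nu}$ is ergodic. There is no genuine obstacle in the argument; the only item requiring any care is matching the sampling definition of $\mu_{\bbM,\nu}$ with the measure induced by the AHK system, and this is essentially a bookkeeping exercise.
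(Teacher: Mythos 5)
Your proposal is correct and follows essentially the same route as the paper: both construct the random-free AHK system $(f_n)$ from a measure-preserving Borel isomorphism $h\colon([0,1],\lambda)\to(\bbM,\nu)$, note that each $f_n$ ignores all non-singleton coordinates (in particular the one indexed by $\emptyset$), and conclude ergodicity from Theorem~\ref{thm:ergodic}. The verifications you spell out (coherence, agreement with the sampling definition of $\mu_{\bbM,\nu}$) are exactly the routine checks the paper leaves implicit.
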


In fact, this AHK system is ``random-free''. This terminology comes from the world of graphons: a graphon is said to be \defn{random-free} \cite[\S10]{MR3043217} when it is $\{0,1\}$-valued almost everywhere. This can be thought of as ``having randomness'' only at the level of vertices (and not at higher levels --- namely edges, in the case of graphs). See also $0$--$1$ valued graphons in \cite{MR2815610} and the simple arrays of \cite{MR1702867}. A graphon is random-free if and only if the corresponding AHK system is random-free in the following sense.

\begin{definition}
An AHK system $(f_n)_{n\in \n}$ is \defn{random-free} if each function $f_n$ depends only on the singleton variables $\xi_{\{a\}}$ for $a\in \n$. An ergodic structure $\mu$ is \defn{random-free} if it has a random-free AHK representation.
\end{definition}

We would like to transfer properties of $\bbM$ to almost-sure properties of $\mu_{\bbM,\nu}$. It is not true in general that $\mu_{\bbM,\nu}\modelsas \Th_F(\bbM)$. But the following property will allow us to transfer satisfaction in $\bbM$ to satisfaction in $\mu_{\bbM,\nu}$ for $\Pi_2^-$ sentences.

\begin{definition}\label{def:strongwitnesses}
	Let $(\bbM,\nu)$ be a measured structure, and let $\varphi$ be a $\Pi_2^-$ sentence.
We say that \defn{$(\bbM, \nu)$ satisfies $\varphi$ with strong witnesses} (or \defn{has strong witnesses for $\varphi$}) if the following hold.
\begin{itemize}
\item If $\varphi$ is universal, then $\bbM\models \varphi$.
\item If $\varphi$ is pithy $\Pi_2$, i.e., of the form $\forall\xx\, \exists y\, \rho(\xx,y)$, then for every tuple $\aa$ from $\bbM$, the set $\rho(\aa,\bbM) = \{b\in \bbM\mid \rho(\aa,b)\}$ either contains an element of the tuple $\aa$ or has positive $\nu$-measure.
\end{itemize}
	For a $\Pi_2^-$ theory $T$,
we say that \defn{$(\bbM, \nu)$ satisfies $T$ with strong witnesses} when it satisfies $\varphi$ with strong witnesses for all $\varphi\in T$.
\end{definition}
Note that if $(\bbM, \nu)$ satisfies $T$ with strong witnesses, then $\bbM \models T$.

\begin{lemma}\label{lem:strongwitnesses}
Let $(\bbM,\nu)$ be a measured structure, and let $\mu = \mu_{\bbM,\nu}$. 
\begin{enumerate}[(i)]
\item Let $Q$ be a countable set of partial quantifier-free types. If $\bbM$ omits all the types in $Q$, then $\mu$ almost surely omits all the types in $Q$.
\item Let $T$ be a $\Pi_2^-$ theory. If $(\bbM, \nu)$ satisfies $T$ with strong witnesses, then $\mu$ almost surely satisfies $T$.
\item Further, if there is a quantifier-free formula $\chi(\xx)$ such that $\bbM$ is $\chi$-rooted with respect to quantifier-free types, then $\mu$ is properly ergodic.
\end{enumerate}
\end{lemma}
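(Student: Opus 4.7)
The approach exploits the sampling description directly: for any tuple $(i_1,\dots,i_n)$ of distinct indices in $\Nats$, the corresponding tuple of sampled points $(a_{i_1},\dots,a_{i_n})$ has distribution $\nu^n$ on $\bbM^n$, and atomlessness of $\nu$ ensures that almost every distinct-index tuple consists of distinct points. All three parts then reduce to combining probability-one events across countable families.

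For part (i), fix $q \in Q$ of arity $n$ and a distinct-index tuple $(i_1,\dots,i_n)$. Since $\bbM$ omits $q$, the realization set $\{\bb \in \bbM^n : \bbM \models q(\bb)\}$ is empty, so the probability that $(a_{i_1},\dots,a_{i_n})$ realizes $q$ equals $0$. A countable union across $Q$ and across distinct-index tuples gives $\mu$-almost-sure omission of every type in $Q$.

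For part (ii), treat sentences $\varphi \in T$ one at a time. For a universal sentence $\forall \xx\, \psi(\xx)$, the assumption $\bbM \models \psi(\bb)$ for every $\bb$ means each distinct-index tuple almost surely satisfies $\psi$ in the sampled structure, and a countable union suffices. For a pithy $\Pi_2$ sentence $\forall \xx\, \exists y\, \rho(\xx, y)$, fix a distinct-index tuple $(i_1,\dots,i_n)$ and condition on $\aa = (a_{i_1},\dots,a_{i_n})$. By the strong-witness hypothesis applied in $\bbM$, either some coordinate $a_{i_k}$ already lies in $W_{\aa} = \{b \in \bbM : \bbM \models \rho(\aa, b)\}$ (so $i_k$ witnesses $\varphi$ at this tuple), or $\nu(W_{\aa}) = p > 0$; in the latter case the conditionally i.i.d.\ tail $(a_j)_{j \notin \{i_1,\dots,i_n\}}$ misses $W_{\aa}$ with probability $(1-p)^{\aleph_0} = 0$, so some $a_j$ almost surely witnesses $\varphi$. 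Unconditioning and taking a countable union over distinct-index tuples and over $\varphi \in T$ gives $\mu \modelsas T$.

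For part (iii), the key computation is that every non-redundant quantifier-free $n$-type $p$ containing $\chi$ has $\mu(p) = 0$. If $p$ has no realization in $\bbM$, the claim is immediate. Otherwise, $\chi$-rootedness supplies a root $a^* \in \bbM$ contained in every realization of $p$, so the realization set in $\bbM^n$ lies inside the $\nu^n$-null set $\bigcup_{k=0}^{n-1}\{\bb \in \bbM^n : b_k = a^*\}$. Now suppose for contradiction that $\mu$ concentrates on the isomorphism class of some countable $N$. Then any quantifier-free type realized in $N$ has positive $\mu$-measure: the sampled structure is almost surely isomorphic to $N$ and thus almost surely realizes the type, so by countable additivity some distinct-index tuple has positive $\mu$-measure of realizing the type, and invariance (Remark~\ref{measure-of-formula}) promotes this to $\mu(p) > 0$. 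Combining, $N$ realizes no $\chi$-containing quantifier-free type, so $N \models \lnot\exists\xx\,\chi(\xx)$, and almost every sampled structure omits $\chi$; this contradicts the positive $\nu^n$-measure of the realization set of $\chi$ in $\bbM^n$, which is the additional ingredient supplied by the construction of Section~\ref{sec:construction} under which this lemma will be invoked.

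The main obstacle is keeping part (iii) honest: $\chi$-rootedness on its own is vacuous if $\chi$ is never realized in $\bbM$, so the proof must marry rootedness to a positive $\nu^n$-measure realization of $\chi$ imported from the construction context. Once the distribution of a distinct-index tuple is identified as $\nu^n$ and atomlessness is applied to collapse realization sets of rooted types to $\nu^n$-null subsets of $\bbM^n$, the remaining work is routine union-bound bookkeeping.
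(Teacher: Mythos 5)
Your parts (i) and (ii) are essentially the paper's own proof: (i) is the deterministic observation that a type with empty realization set in $\bbM$ cannot be realized in a sampled substructure, and (ii) is the same conditioning/Fubini--Tonelli argument, splitting into the case where some coordinate of the tuple is already a witness and the case where the witness set has positive $\nu$-measure and is therefore hit almost surely by infinitely many of the remaining i.i.d.\ samples. In (iii) you share the paper's key computation --- a root for each non-redundant quantifier-free type containing $\chi$, together with atomlessness of $\nu$, forces every such type to have $\mu$-measure $0$ --- but you diverge at two points. First, the paper concludes proper ergodicity by citing Theorem~\ref{thm:characterization1}, whereas you reprove the needed direction inline (if $\mu$ concentrated on a countable $N$, any non-redundant type realized in $N$ would have positive measure by countable additivity and invariance); this works, and note it is the clause $\chi(\xx)\rightarrow\bigwedge_{i\neq j}x_i\neq x_j\in T$ built into Definition~\ref{def:rootedness} that lets you ignore redundant realizations of $\chi$ in $N$. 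Second, and more substantively, you secure $\mu(\chi(\xx))>0$ by importing the positive $\nu^n$-measure of $\chi(\bbM)$ from the Section~\ref{sec:construction} construction, correctly flagging that rootedness alone is vacuous when $\chi$ is never realized; the paper instead obtains $\mu(\chi(\xx))>0$ inside the lemma, from the fact that $\chi(\xx)\land\bigwedge_{i\neq j}x_i\neq x_j$ is consistent with $T$ together with $\mu\modelsas T$, treating that consistency as part of the ambient rootedness set-up rather than as data supplied by the later construction. So you have identified exactly the soft spot the paper smooths over, but as written your (iii) proves the statement only under an added hypothesis; to match the lemma as stated you should either invoke the consistency-with-$T$ clause as the paper does, or record the positivity assumption explicitly rather than deferring it to the application.
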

\begin{proof}
\emph{(i)}
If no tuple from $\bbM$ realizes a quantifier-free type $q\in Q$, then no tuple from any countable substructure sampled from $\bbM$ realizes $q$.
		
\vspace*{0.5em}
\noindent \emph{(ii)}
Every universal sentence $\forall\xx\, \psi(\xx)$ in $T$ is almost surely satisfied by $\mu$, since every tuple $\overline{v}$ from $\bbM$ satisfies the quantifier-free formula $\psi(\xx)$.

Next, consider sentences of the form $\forall\xx\,\exists y\, \rho(\xx,y)$. Fix an $n$-tuple $\aa$ from $\n$, where $n$ is the length of $\xx$. Corresponding to this tuple, we have a random tuple $\overline{v} \defas (v_{a_1},\dots,v_{a_n})$ sampled from $\bbM$. By the Fubini--Tonelli theorem, it suffices to show that for a measure one collection of values of this random tuple (e.g., those for which coordinates indexed by distinct natural numbers take distinct values), there is almost surely some $b\in \n$ such that $\bbM\models \rho\bigl(\overline{v},v_b \bigr)$.

By strong witnesses, $\rho(\overline{v},\bbM)$ either contains an element $v_{a_i}$ of the tuple $\overline{v}$ or has positive measure. In the first case, $v_{a_i}$ serves as our witness. In the second case, since there are infinitely many other independent random elements $(v_b)_{b\in \n\setminus\enum{\aa}}$, almost surely infinitely many of them land in the set $\rho(\cc,\bbM)$.

\vspace*{0.5em}
\noindent \emph{(iii)}
		By \emph{(ii)}, $\mu\modelsas T$, and since $\chi(\xx)\land (\bigwedge_{i\neq j} x_i\neq x_j)$ is consistent with $T$, $\mu(\chi(\xx))>0$. Let $p$ be any type containing $\chi(\xx)$, and let $q$ be its restriction to the quantifier-free formulas. To show that $\mu(p) = 0$, it suffices to show that $\mu(q) = 0$. 

Now since $\bbM$ is $\chi$-rooted with respect to quantifier-free types, $q$ has a root $v$ in $\bbM$. The probability that a tuple sampled from $\bbM$ satisfies $q$ is bounded above by the probability that the tuple contains the root $v$. This probability is $0$, since the measure $\nu$ is atomless. Hence, by Theorem~\ref{thm:characterization1}, $\mu$ is properly ergodic.\qedhere
\end{proof}

Thus, after applying the $\Pi_2^-$ transformation from \S\ref{sec:pithy} to an $F$-theory $T$, we have reduced the problem of constructing a properly ergodic structure almost surely satisfying $T$ to that of constructing a measured structure with the properties in Lemma~\ref{lem:strongwitnesses}. 

\begin{theorem}\label{thm:characterization2}
Let $F$ be a countable fragment of $\Lwow$, let $T$ be a complete $F$-theory with trivial dcl, let $\chi(\xx)$ be a formula in $F$, and let $M$ be a $\chi$-rooted model of $T$. Then there are continuum-many properly ergodic structures $\mu$ such that $\mu\modelsas T$.
\end{theorem}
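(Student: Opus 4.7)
The plan is to reduce the problem via Theorem~\ref{thm:pithy} and then build a suitable measured structure by an inverse limit. First, I would apply Theorem~\ref{thm:pithy} to obtain a language $L' \supseteq L$, a $\Pi_2^-$ $L'$-theory $T'$, and a countable set $Q$ of partial quantifier-free types such that models of $T'$ omitting $Q$ are (via $\restriction_L$) exactly models of $T$. By Corollary~\ref{cor:measurebijection}, it suffices to produce continuum-many properly ergodic $L'$-structures that almost surely satisfy $T'$ and omit $Q$. The given model $M$ expands canonically to $M' \models T'$ omitting $Q$, and since each $F$-formula $\varphi$ corresponds to an atomic $L'$-formula $R_\varphi$ under Theorem~\ref{thm:pithy}(a), the $\chi$-rootedness of $M$ transfers to $R_\chi$-rootedness of $M'$ with respect to quantifier-free $L'$-types. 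Trivial dcl of $T$ translates to trivial dcl of $T'$ in this sense: for each pithy $\Pi_2$ axiom $\forall \xx\, \exists y\, \rho(\xx,y)$ of $T'$ and each tuple $\aa$ in any model, the witness set $\rho(\aa, \cdot)$ is either captured by an element of $\aa$ or is infinite.

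The central step is to construct a measured Borel $L'$-structure $(\bbM, \nu)$, with $\nu$ atomless, such that $(\bbM, \nu)$ satisfies $T'$ with strong witnesses (Definition~\ref{def:strongwitnesses}), omits every type in $Q$, and is $R_\chi$-rooted with respect to quantifier-free types. I would carry out this construction as an inverse limit of finite labeled partitions of $[0,1]$, where each piece of the partition is labeled by an element of $M'$ (or of a carefully chosen end extension of $M'$), and the quantifier-free type of a tuple of sampled points in $\bbM$ is determined by the tuple of $M'$-labels of the pieces they land in. Using a diagonal enumeration of $\Pi_2^-$ axioms of $T'$, types in $Q$, and label tuples that have appeared, I would refine the partition at each stage so as to (a) produce a positive-measure witness piece $\rho(\aa, \cdot)$ for each pithy $\Pi_2$ axiom instance $\rho(\aa, y)$ whose witness is not already an element of $\aa$ (using trivial dcl to guarantee infinitely many available witnesses in $M'$), (b) eliminate each type in $Q$ from the current labeling, and (c) preserve rootedness by always assigning labels so that, whenever a new tuple realizing a type containing $R_\chi$ is created, the $M'$-root of that type labels an element of the tuple in $\bbM$. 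Atomlessness of $\nu$ follows from splitting each positive-measure piece at every stage. Lemma~\ref{lem:strongwitnesses} then yields that $\mu_{\bbM, \nu}$ is a properly ergodic structure almost surely satisfying $T$. This inverse-limit construction is the main obstacle; making all three requirements hold simultaneously requires delicate bookkeeping and is where the methods of \cite{AFP}, \cite{AFNP}, and \cite{AFPcompleteclassification} must be refined to preserve the rootedness skeleton coming from $M$.

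Finally, to upgrade from one to continuum-many properly ergodic structures, I would apply the rescaling technique of \cite{AFKP}. The idea is to fix the underlying Borel structure $\bbM$ produced above and vary the measure: for each parameter $s$ in a continuum-sized index set, one rescales $\nu$ on a countable family of positive-measure pieces of $\bbM$ (preserving total mass $1$ and atomlessness) so that the resulting measured structure $(\bbM, \nu_s)$ still satisfies $T'$ with strong witnesses, still omits $Q$, and is still $R_\chi$-rooted. By Lemma~\ref{lem:strongwitnesses} each $\mu_{\bbM, \nu_s}$ is then a properly ergodic structure almost surely satisfying $T$. Distinctness of the $\mu_{\bbM, \nu_s}$ for different $s$ will be witnessed by the fact that the rescaling changes the $\mu$-measure assigned to certain quantifier-free formulas in $\Lwow$, so different choices of $s$ yield provably different values $\mu_{\bbM, \nu_s}(\varphi(\xx))$ for suitably chosen $\varphi$. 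Together with the previous step this produces the required continuum of properly ergodic structures concentrating on $T$.
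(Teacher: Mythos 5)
Your plan follows the same route as the paper (Morleyize via Theorem~\ref{thm:pithy} and Corollary~\ref{cor:measurebijection}, build a measured Borel structure by an inverse limit guided by the rooted model $M'$, apply Lemma~\ref{lem:strongwitnesses}, then rescale \`a la \cite{AFKP}), but it stops exactly where the content of the theorem lies: the inverse-limit construction is described only by intent, and your requirements (a)--(c) are never shown to be simultaneously realizable --- you yourself flag this as ``the main obstacle.'' Two concrete problems. First, since $L'$ is infinite, the stipulation that ``the quantifier-free type of a tuple of sampled points is determined by the tuple of $M'$-labels'' is not well defined as stated: pieces must be split and relabelled cofinally often (to make $\nu$ atomless and to add witnesses), and relabelling changes the full $L'$-type of the label; indeed, in a $\chi$-rooted $M'$ two disjoint tuples can never share a full quantifier-free type containing $R_\chi$, so no splitting can preserve full $L'$-types. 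The paper's construction resolves this with an increasing chain of finite sublanguages $L_k$ and connecting maps required to preserve only $L_k$-types, and trivial dcl is used precisely to find a second realization of the current finite-language type when splitting an element --- not, as in your sketch, to make witness sets infinite. Second, your clause (c) for preserving rootedness (``assign labels so that the $M'$-root of the type labels an element of the tuple'') does not translate into a stage-by-stage condition, because the types needing roots are full $L'$-types of limit threads, invisible at any finite stage. The paper instead adds, at each stage, relation symbols separating every pair of $n$-tuples of the current finite structure that have distinct full $L'$-types in $M'$; only because of this device do tuples of $\bbM$ realizing a common type containing $R_\chi$ project to tuples realizing a common $L'$-type in $M'$, from which $\bbM$ inherits a root. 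Without some such mechanism, $R_\chi$-rootedness of $\bbM$ simply does not follow from your bookkeeping.

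The continuum-many step also has a gap. Mutual absolute continuity of $\nu$ with its rescalings does give that each $\mu_{\bbM,\nu_s}$ still satisfies $T'$ with strong witnesses, omits $Q$, and is properly ergodic, but distinctness is asserted rather than proved: \cite[Proposition~3.8]{AFKP} produces continuum-many values only when applied to a $\sympar{n}$-invariant Borel set of $\nu^n$-measure strictly between $0$ and $1$, so you must first exhibit a quantifier-free formula whose symmetrization defines such a set. If every symmetrized quantifier-free formula had measure $0$ or $1$, rescaling could leave the induced measure on $\Str_{L'}$ unchanged. The paper proves this non-degeneracy as a separate Claim, and its proof uses exactly the $R_\chi$-rootedness of $\bbM$ together with atomlessness of $\nu$; this argument (or a substitute for it) is missing from your proposal.
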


\begin{proof}
We begin by applying Theorem~\ref{thm:pithy} to obtain a language $L'\supseteq L$, a $\Pi_2^-$ theory $T'$, and a countable set of partial quantifier-free types $Q$. Let $M'$ be the natural expansion of $M$ to an $L'$-structure. Then $M'$ is $R_\chi$-rooted, where $R_\chi(\xx)$ is the atomic $L'$-formula corresponding to the $L$-formula $\chi(\xx)$. By Corollary~\ref{cor:measurebijection}, it suffices to construct a properly ergodic $L'$-structure which almost surely satisfies $T'$ and omits the types in $Q$.

\bigskip

\noindent \emph{Part 1: The inverse system}

We construct a sequence $(A_k)_{k\in \n}$ of finite $L'$-structures, each of which is identified with a substructure of $M'$. Given a structure $A$, we define the structure $A^*$ to have underlying set $A\cup\{*\}$, where no new relations hold involving $*$. For each $k$, we equip the underlying set of each $A_k^*$ with a discrete probability measure $\nu_k$ that assigns positive measure to every element, and we fix a finite sublanguage $L_k$ of $L'$. 
Finally, we define connecting maps $g_k\colon A_{k+1}^*\to A_k^*$ such that $g_k(*) = *$ for all $k$, which preserve the measures and certain quantifier-free types, as follows:
\begin{enumerate}[(1)]
\item $\nu_{k+1}(g_k^{-1}[X]) = \nu_k(X)$ for all $X\subseteq A_k^*$.
\item If $\aa$ is a tuple of distinct elements from $A_{k+1}$ such that $g_k(\aa)$ is a tuple of distinct elements of $A_k$, then $\qftp_{L_k}(\aa) = \qftp_{L_k}(g_k(\aa))$. Note that we make no requirement if $g_k$ is not injective on $\enum{\aa}$ or if any element of $\aa$ is mapped to $*$.
\end{enumerate}

We enumerate all pithy $\Pi_2$ sentences
	in $T'$ as $\langle \varphi_k\rangle_{k\in\n}$ and the types in $Q$ as $\langle q_i\rangle_{k\in\n}$ with redundancies, so that each sentence and each type appears infinitely often in its list. We also enumerate the symbols in the language $L'$ as $\langle R_k\rangle_{k\in\n}$.

At stage $0$, we start with $A_0 = \emptyset$, the empty substructure of $M'$. Then $A_0^* = \{*\}$, and we set $\nu_0(\{*\}) = 1$ and $L_0 = \emptyset$. 

At stage $k+1$, we are given $A_k$, $\nu_k$, and $L_k$. We define $A_{k+1}$, $\nu_{k+1}$, $L_{k+1}$, and the connecting map $g_k$ in four steps.

\vspace*{0.5em}
\noindent \emph{Step 1:} Splitting the elements of $A_k$.

Enumerate the elements of $A_k$ as $\langle a_1,\dots,a_m\rangle$. We build intermediate substructures $B_i = \{a_1,\dots,a_m,a_1',\dots,a_i'\}$ of $M'$, where each new element $a_j'$ is a ``copy'' of $a_j$ to be defined. We start with $B_0 = A_k$. 

Given $B_i$, let $\varphi_{B_i}(x_1,\dots,x_m,x_1',\dots,x_i')$ be the conjunction of all atomic and negated atomic $L_k$ formulas holding on $B_i$, so that $\varphi_{B_i}$ encodes the quantifier-free $L_k$-type of $B_i$. Now there is an $L$-formula $\psi_{B_i}$ in $F$ such that $\psi_{B_i}$ has the same realizations as $\varphi_{B_i}$ in $M'$. Since $T = \Th_F(M)$ has trivial dcl, we can find another realization $a_{i+1}' \neq a_{i+1}$ of $\psi_{B_i}(a_1,\dots,x_{i+1},\dots,a_m,a_1',\dots,a_i')$ in $M'\setminus B_i$. Set $B_{i+1} = B_i\cup\{a_{i+1}'\}$. We have 
\[
	(\dagger) \hspace{.1in} \qftp_{L_k}(a_1,\dots,a_{i+1},\dots,a_m,a_1',\dots,a_i') = \qftp_{L_k}(a_1,\dots,a_{i+1}',\dots,a_m,a_1',\dots,a_i').
\]

At the end of Step 1, we have a structure $B_m = \{a_1,\dots,a_m,a_1',\dots,a_m'\}$.

\vspace*{0.5em}
\noindent \emph{Step 2:} Splitting $*$.

The pithy $\Pi_2$ sentence $\varphi_k$ has the form $\forall \xx\, \exists y\, \rho(\xx,y)$, where $\xx$ is a tuple of length $j$ and $\rho(\xx,y)$ is quantifier-free. Suppose there is a tuple $\aa$ from $B_m$ such that $B_m\models \lnot \exists y\, \rho(\aa,y)$. Then, since $M'\models \exists y\, \rho(\aa,y)$, we can choose some witness $c_{\aa}$ to the existential quantifier in $M'\setminus B_m$. Let $W = \{c_{\aa}\mid \aa\in B_m^j\text{~and~}B_m\models \lnot \exists y\,\rho(\aa,y)\}$ be the (finite) set of chosen witnesses. Note that if $\xx$ is the empty tuple of variables, then $W$ is either empty or consists of a single witness, depending on whether $B_m\models \exists y\, \rho(y)$. 

Let $A_{k+1} = B_m \cup W$ if $W$ is non-empty, and otherwise let $A_{k+1} = B_m \cup \{c\}$, where $c$ is any new element in $M'\setminus B_m$. 

\vspace*{0.5em}
\noindent \emph{Step 3:} Defining $g_k$ and $\nu_{k+1}$.

Recall that $g_k$ is to be a map from $A_{k+1}^*$ to $A_k^*$. We set $g_k(a_i) = g_k(a_i') = a_i$ and $g_k(c) = g_k(*) = *$ for $c\in A_{k+1}\setminus B_m$.

We define $\nu_{k+1}$ by splitting the measure of an element of $A_k^*$ evenly among its preimages under $g_k$. So $\nu_{k+1}(a_i) = \nu_{k+1}(a_i') = \frac{1}{2}\nu_k(a_i)$, and $\nu_{k+1}(c) = \nu_{k+1}(*) = \frac{1}{N}\nu_k(*)$, where $N = |A_{k+1}^*\setminus B_m|\geq 2$. Note that every element of $A_{k+1}^*$ has positive measure, by induction.

\vspace*{0.5em}
\noindent \emph{Step 4:} Defining $L_{k+1}$.

We expand the current language $L_k$ to $L_{k+1}$ by adding finitely many new symbols from $L'$.
\begin{enumerate}[(a)]
\item Add $R_{k}$ to $L_{k+1}$ if it is not already included. 
\item Since $A_{k+1}$ is a substructure of $M'$, no tuple from $A_{k+1}$ realizes $q_k$. That is, for every tuple $\aa$ from $A_{k+1}$, there is some quantifier-free formula $\varphi_{\aa}(\xx)\in q_k$ such that $M'\models \lnot \varphi_{\aa}(\aa)$. Add the finitely many relation symbols appearing in $\varphi_{\aa}$ to $L_{k+1}$.
\item Let $n$ be the number of free variables in $\chi(\xx)$. For every pair of $n$-tuples $\aa$ and $\bb$ from $A_{k+1}$ that realize distinct quantifier-free $L'$-types in $M'$, there is some relation symbol $R_{\aa,\bb}$ that separates their types. Add $R_{\aa,\bb}$ to $L_{k+1}$.
\end{enumerate}

This completes stage $k+1$ of the construction. Let us check that conditions (1) and (2) above are satisfied by the connecting map $g_k$.

\begin{enumerate}[(1):]
\item Since $\nu_k$ and $\nu_{k+1}$ are discrete measures on finite spaces, it suffices to check that $\nu_k(a) = \sum_{b\in g_k^{-1}[\{a\}]} \nu_{k+1}(b)$ for every singleton $a\in A_k^*$. This follows immediately from our definitions of $g_k$ and $\nu_{k+1}$.
\item Let $\bb$ be a tuple from $A_{k+1}$. The assumption that $g_k(\bb)$ is a tuple of distinct elements of $A_k$ means that every element of $\bb$ is in $B_m$ (since the other elements are mapped to $*$) and that $a_i$ and $a_i'$ are not both in $\bb$ for any $i$. For any function $\gamma\colon [m]\to [2]$, let $\aa^\gamma$ be the $m$-tuple which contains $a_i$ if $\gamma(i) = 0$ and $a_i'$ if $\gamma(i) = 1$. Then, expanding $\bb$ to an $m$-tuple of the form $\aa^\gamma$, it suffices to show that
$\qftp_{L_k}(\aa^\gamma) = \qftp_{L_k}(g_k(\aa^\gamma)) = \qftp_{L_k}(\aa)$. This follows by several applications of instances of the equality $(\dagger)$ above.
\end{enumerate}

\bigskip

\noindent \emph{Part 2: The measured structure}

Let $\mathbb{X}$ be the inverse limit of the system of sets $A_k^*$ and surjective connecting maps $g_k$. For each $k$, let $\pi_k$ be the projection map $\mathbb{X}\to A_k\cup\{*\}$. Then $\mathbb{X}$ is a profinite set, so it has a natural topological structure as a Stone space, in which the basic clopen sets are exactly the preimages under the maps $\pi_k$ of subsets of the sets $A_k^*$. Note that $\mathbb{X}$ is separable, so it is a standard Borel space.

Let $\nu^*$ be the finitely additive measure on the Boolean algebra $\mathcal{B}^*$ of clopen subsets of $\mathbb{X}$ defined by $\nu^*(\pi_k^{-1}[X]) = \nu_k(X)$. This is well defined by condition~(1). By the Hahn--Kolmogorov Measure Extension Theorem~\cite[Theorem 1.7.8]{Tao}, $\nu^*$ extends to a Borel probability measure $\nu$ on $\mathbb{X}$.

Now each element $a$ of $A_k^*$ has at least $2$ preimages in $A_{k+1}^*$, each of which have measure at most $\frac{1}{2}\nu_k(a)$. Hence, by induction, the measure of each element of $A_k^*$ is at most $2^{-k}$. So for all $x\in \mathbb{X}$, the point $x$ is contained in a basic clopen set $X_k = \pi_k^{-1}[\{\pi_k(x)\}]$ with $\nu(X_k)\leq 2^{-k}$ for all $k$. This implies that $\nu(\{x\}) = 0$ and $\nu$ is non-atomic.

Note that there is a unique element $*$ of $\mathbb{X}$ with the property that $\pi_k(*) = *$ for all $k$. We define a Borel $L'$-structure $\bbM$ with domain $\mathbb{X}\setminus \{*\}$ (which is also a standard Borel space). Since we have only removed a measure $0$ set from $\mathbb{X}$, the probability measure $\nu$ on $\mathbb{X}$ restricts to a probability measure on $\bbM$, which we also call $\nu$.

We define the structure on $\bbM$ by specifying the quantifier-free type of every tuple of distinct elements from $\bbM$. By Step 4~(a), $\bigcup_{k = 0}^\infty L = L'$. Given a tuple $\aa$ of distinct elements from $\bbM$ and a quantifier-free formula $\varphi(\xx)$, we choose $k$ large enough so that $L_k$ contains all of the relation symbols appearing in $\varphi(\xx)$ and so that $\pi_k(\aa)$ is a tuple of distinct elements from $A_k$. We set $\bbM\models \varphi(\aa)$ if and only if $A_k\models \varphi(\pi_k(\aa))$. This is well-defined by condition~(2).

By Step 4 (a), $\bigcup_{k = 0}^\infty L_k = L'$. Given a tuple $\aa$ from $\bbM$ and a symbol $R$ in $L'$, we choose $k$ large enough so that $R\in L_k$ and distinct elements of $\aa$ are mapped by $\pi_k$ to distinct elements of $A_k$. We set $\bbM\models R(\aa)$ if and only if $A_k\models R(\pi_k(\aa))$. This is well-defined by condition (2).

According to this definition, to determine whether a quantifier-free formula $\varphi(\xx)$ holds of a tuple $\aa$ with repeated elements, we can remove the redundancies from $\aa$ and replace the corresponding variables in $\xx$. For example, if $a_i = a_j$, we can remove $a_j$ and replace instances of $x_j$ in $\varphi(\xx)$ with $x_i$. This is equivalent to choosing $k$ large enough so that distinct elements of $\aa$ are mapped by $\pi_k$ to distinct elements of $A_k$ and checking whether $A_k\models \varphi(\pi_k(\aa))$. 

The interpretation of a relation symbol $R$ is then a Borel subset of $\bbM^{\ar(R)}$. Indeed, fixing $k$, the set of tuples $\aa$ such that distinct elements of $\aa$ are mapped by $\pi_k$ to distinct elements of $A_k$ and $\pi_k(\aa)$ satisfies $R$ is closed (the finite union of certain boxes intersected with certain diagonals), and the interpretation of $R$ is the countable union (over $k$) of these sets. Hence $\bbM$ is a Borel structure. 

We now verify the conditions of Lemma~\ref{lem:strongwitnesses} for the measured structure $(\bbM, \nu)$, the $\Pi_2^-$ theory $T'$, the quantifier-free types $Q$, and the quantifier-free formula $R_\chi(\xx)$.

\vspace{0.5em}
\noindent \emph{(i)}
$\bbM$ omits all the types in $Q$. 

Let $q(\xx)$ be a type in $Q$, and let $\aa$ be a tuple from $\bbM$. Let $k$ be large enough so that $\pi_k(\aa)$ is a tuple of distinct elements of $A_k$. Since $q$ appears infinitely many times in our enumeration of $Q$, there is some $l>k$ such that $q = q_l$. Then $\bb \defas \pi_{l+1}(\aa)$ is also a tuple of distinct elements of $A_{l+1}$. In Step 4 (b) of stage $l+1$ of the construction, we ensured that $L_{l+1}$ includes the relation symbols appearing in a quantifier-free formula $\varphi_{\bb}(\xx)\in q_k$ such that $A_{l+1}\models \lnot\varphi_{\bb}(\bb)$. Then also $\bbM\models \lnot \varphi_{\bb}(\overline{a})$, and hence $\aa$ does not realize $q$.

\vspace{0.5em}
\noindent	\emph{(ii)}
$(\bbM, \nu)$ satisfies $T'$ with strong witnesses.

Let $\varphi$ be a $\Pi_2^-$ sentence in $T'$. Then $\varphi$ has the form $\forall \xx\, \psi(\xx)$, where $\psi(\xx)$ is either quantifier-free or has a single existential quantifier. Let $\aa$ be a tuple from $\bbM$. Let $k$ be large enough so that all the symbols in $\varphi$ are in $L_k$ and $\pi_k(\aa)$ is a tuple of disjoint elements of $A_k$. 

If $\psi(\xx)$ is quantifier-free, then $\bbM\models \psi(\aa)$ if and only if $A_k\models \psi(\pi_k(\aa))$. The latter holds, since $A_k$ is a substructure of $\bbM$, and $\bbM\models \varphi$. 

Otherwise, $\psi(\xx)$ has the form $\exists y\, \rho(\xx,y)$, and since $\varphi$ appears infinitely many times in our enumeration of the pithy $\Pi_2$ sentences in $T'$, there is some $l>k$ such that $\varphi = \varphi_l$. Then $\pi_l(\aa)$ is a tuple of distinct elements of $A_l$, and $\bb \defas \pi_{l+1}(\aa)$ is a tuple of distinct elements of $A_{l+1}$. In Step 2 of stage $l+1$ of the construction, we ensured that there was some witness $c_{\bb}$ such that $A_{l+1}\models \rho(\bb,c_{\bb})$. If $c_{\bb}$ is not an element of the tuple $\bb$, then for any $c\in \bbM$ such that $\pi_{l+1}(c) = c_{\bb}$, we have $\bbM\models \rho(\aa,c)$. Since $\nu(\pi_{l+1}^{-1}[\{c\}]) = \nu_{l+1}(c) >0$, the set $\rho(\aa,\bbM)$ has positive $\nu$-measure. On the other hand, if $c_{\bb}$ is an element of the tuple $\bb$, say $b_i$, then $\bbM\models \rho(\aa,a_i)$. 

\vspace{0.5em}
\noindent	\emph{(iii)}
$\bbM$ is $R_\chi$-rooted with respect to quantifier-free types.

We would like to show that every non-redundant quantifier-free $n$-type containing $R_\chi(\xx)$ that is realized in $\bbM$ has a root in $\bbM$. Suppose not. Then there is a quantifier-free type $p(\xx)$ and a family of $n$-tuples $(\aa^i)_{i\in I}$ from $\bbM$ such that each $\aa^i$ realizes $p$, but there is no element $a$ which is in every $\aa^i$. Note that if such a family exists, then we can find one containing only finitely many tuples: picking some $\aa$ in the family, for each element $a_j$ in $\aa$ there is another tuple in the family which does not contain $a_j$, so $n+1$ tuples suffice.

Let $(\aa^1,\dots,\aa^m)$ be our finite family of tuples. Let $k$ be large enough so that $R_\chi\in L_k$ and $\pi_k$ is injective on $\bigcup_{i = 1}^m \enum{\aa^i}$. For all $i$, let $\bb^i = \pi_k(\aa^i)$. Then all of the tuples $\bb^i$ realize the same quantifier-free $L_k$-type $p' = p\restriction L_k$ in $A_k$, and $p'$ contains $R_\chi(\xx)$. By Step 4 (c) of stage $k$ of our construction, the tuples $\bb^i$ must actually realize the same quantifier-free $L'$ type $q\supseteq p'$ in $M'$ (which may be distinct from $p$). But there is no element which appears in all of these tuples, contradicting the fact that $M'$ is $R_\chi$-rooted.

\vspace{0.5em}
Let $\mu = \mu_{\bbM,\nu}$. By Lemma~\ref{lem:strongwitnesses}, $\mu$ is a properly ergodic structure that almost surely satisfies $T'$ and omits the types in $Q$.

\bigskip

\noindent \emph{Part 3: Rescaling to obtain continuum-many properly ergodic structures}

Again, let $n$ be the number of free variables in $\chi(\xx)$. For any quantifier-free formula $\varphi(x_1,\dots,x_n)$, we define the quantifier-free formula $\varphi^*(x_1,\dots,x_n)$: $$\bigvee_{\sigma\in \sympar{n}} \varphi(x_{\sigma(1)},\dots,x_{\sigma(n)}).$$ Note that the set $\varphi^*(\bbM) = \{\aa\in \bbM^n\mid \bbM\models \varphi^*(\aa)\}$ is invariant under the natural action of the symmetric group $\sympar{n}$ on $\bbM^n$ by permuting coordinates.

We will use the following claim to apply the rescaling technique from~\cite{AFKP}.

\vspace*{0.5em}

\noindent {\bf Claim:} There is some quantifier-free formula $\varphi(\xx)$ such that $0 < \nu^n(\varphi^*(\bbM)) < 1$.

\noindent \emph{Proof of Claim.}
Suppose not. Then for every quantifier-free formula $\varphi(\xx)$, $\nu^n(\varphi^*(\bbM))$ is equal to $0$ or $1$. Note that $\nu^n(\varphi^*(\bbM))>0$ if and only if $\nu^n(\varphi(\bbM))>0$. In particular, since $\nu^n(R_\chi(\bbM))>0$, we have $\nu^n(R_\chi^*(\bbM)) = 1$.

Let $A\subseteq \bbM^n$ be the set of tuples satisfying the partial quantifier-free type $\{\varphi^*(\xx)\mid \nu^n(\varphi^*(\bbM)) = 1\} \cup \{\neg\varphi^*(\xx)\mid \nu^n(\varphi^*(\bbM)) = 0\}$. Since $A$ is a countable intersection of measure $1$ sets, it has measure $1$. Pick a tuple $\aa\in A$. Some permutation of $\aa$ satisfies $R_\chi$, and $A$ is $\sympar{n}$-invariant, so we may assume that $\bbM\models R_\chi(\aa)$. Let $p(\xx) = \qftp(\aa)$. Since $R_\chi(\xx)\in p(\xx)$, some coordinate $a_i$ of $\aa$ is a root for $p(\xx)$. 

Now $\nu$ is atomless, so the set of tuples in $\bbM^n$ containing $a_i$ has measure $0$. Thus we can pick another tuple $\bb\in A$ which does not contain $a_i$. By rootedness, no permutation of $\bb$ satisfies $p(\xx)$.

In particular, there is some quantifier-free formula $\psi(\xx)\in p(\xx)$ such that no permutation of $\bb$ satisfies $\psi(\xx)$ (explicitly, take the conjunction of $n!$ formulas in $p(\xx)$, one separating $p(\xx)$ from $\qftp(\sigma(\bb))$ for each $\sigma\in \sympar{n}$). We therefore have $\bbM\models \lnot \psi^*(\bb)$. But we also have that $\psi^*(\xx) \in p(\xx)$ and so $\bbM\models \psi^*(\aa)$. Hence every tuple in $A$ must satisfy $\psi^*(\xx)$, contradicting the fact that $\bb\in A$.
\qed
\vspace*{0.5em}

In~\cite{AFKP}, a method is described for rescaling a probability measure $\mu$ according to a weight $\mathcal{W}$ (essentially an assignment of weights to the pieces of a finite partition of the domain) to obtain a new probability measure $\mu^{\mathcal{W}}$. In that paper, all probability measures are continuous measures on $\mathbb{R}$, but the results apply equally well to measures on $\bbM$, since this is a standard Borel space. 

The main observation about this construction is that $\mu$ and $\mu^{\mathcal{W}}$ are equivalent measures, in the sense that they are absolutely continuous with respect to each other. It follows that for our measure $\nu$ on $\bbM$, any measure of the form $\nu^{\mathcal{W}}$ is an atomless probability measure on $\bbM$, with the property that $(\bbM,\nu^{\mathcal{W}})$ satisfies $T'$ with strong witnesses, and hence the measure $\mu_\mathcal{W} = \mu_{\bbM,\nu^{\mathcal{W}}}$ on $\Str_{L'}$ is a properly ergodic structure which almost surely satisfies $T'$ and omits the types in $Q$.

Now by the key proposition~\cite[Proposition 3.8]{AFKP}, since the set $\varphi^*(\bbM)$ from the Claim above is an $\sympar{n}$-invariant Borel set with $\nu^n$-measure strictly between $0$ and $1$, the expression $\nu^{\mathcal{W}}(\varphi^*(\bbM))$ takes on continuum-many values as $\mathcal{W}$ varies through the possible weights. And since $\mu_{\mathcal{W}}(\extent{\varphi^*(\aa)}) = \nu^{\mathcal{W}}(\varphi^*(\bbM))$ for any tuple $\aa$ of distinct elements of $\mathbb{N}$, this construction produces continuum-many properly ergodic structures of the form $\mu_{\mathcal{W}}$. 
\end{proof}

Theorem~\ref{thm:characterization2}, along with the results of the previous sections, gives a ``measure-free'' characterization of those theories which admit properly ergodic models.

\begin{theorem}\label{thm:main}
Suppose $\Sigma$ is a set of sentences in some countable fragment $F$ of $\Lwow$. The following are equivalent:
\begin{enumerate}
\item There is a properly ergodic structure $\mu$ such that $\mu\modelsas \Sigma$.
\item There are continuum-many properly ergodic structures $\mu$ such that $\mu\modelsas \Sigma$. 
\item There is a countable fragment $F'\supseteq F$ of $\Lwow$, a complete $F'$-theory $T\supseteq \Sigma$ with trivial dcl, a formula $\chi(\xx)$ in $F'$, and a model $M\models T$ which is $\chi$-rooted.
\end{enumerate}
\end{theorem}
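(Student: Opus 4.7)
The plan is to close the loop $(2)\Rightarrow(1)\Rightarrow(3)\Rightarrow(2)$, with essentially all of the substantive content drawn from the structural results already established: Theorem~\ref{thm:characterization1} (the Morley--Scott analysis providing a splitting formula $\chi$), Theorem~\ref{thm:trivialdcl} (trivial definable closure from ergodicity), Theorem~\ref{thm:rootedness} (almost-sure rootedness), and Theorem~\ref{thm:characterization2} (the inverse-limit construction together with rescaling). The implication $(2)\Rightarrow(1)$ is trivial, so the remaining work consists of invoking these results in the correct order.

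For $(1)\Rightarrow(3)$, I would start from a properly ergodic $\mu\modelsas\Sigma$ and apply Theorem~\ref{thm:characterization1} with $F$ as the input fragment; this yields a countable fragment $F'\supseteq F$ and a formula $\chi(\xx)\in F'$ with $\mu(\chi(\xx))>0$ while $\mu(p)=0$ for every $F'$-type $p$ containing $\chi(\xx)$. I would then take $T=\Th_{F'}(\mu)$: this contains $\Sigma$ since $\Sigma\subseteq F\subseteq F'$, and it is complete as an $F'$-theory because $\mu$ is ergodic (each $\extent{\varphi}$ for $\varphi\in F'$ is an invariant Borel set, hence receives measure $0$ or $1$, as in Proposition~\ref{prop:complete}). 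Theorem~\ref{thm:trivialdcl} then supplies trivial dcl for $T$, and Theorem~\ref{thm:rootedness} asserts that $\mu$ concentrates on the Borel set of $\chi$-rooted models of $T$; in particular this set is non-empty, producing the required $\chi$-rooted model $M\models T$.

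For $(3)\Rightarrow(2)$, I would appeal directly to Theorem~\ref{thm:characterization2}, which accepts exactly the data packaged in (3)---a complete $F'$-theory $T$ with trivial dcl, a formula $\chi$, and a $\chi$-rooted model $M$---and outputs continuum-many properly ergodic structures almost surely satisfying $T$, hence satisfying $\Sigma\subseteq T$ as well. The main obstacle has therefore been entirely absorbed into the earlier results: the hard directions of Theorem~\ref{thm:main} are the analysis producing $\chi$ (Section~\ref{sec:analysis}), the rootedness theorem (Section~\ref{sec:rootedness}), and the construction of Section~\ref{sec:construction}, and this final statement is really a bookkeeping synthesis with no further technical difficulty to surmount.
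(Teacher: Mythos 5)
Your proposal is correct and follows essentially the same three-step cycle $(3)\Rightarrow(2)\Rightarrow(1)\Rightarrow(3)$ as the paper, invoking the same sequence of earlier results (Theorems~\ref{thm:characterization1}, \ref{thm:trivialdcl}, \ref{thm:rootedness}, and~\ref{thm:characterization2}) in the same way. In fact your choice $T=\Th_{F'}(\mu)$ in the $(1)\Rightarrow(3)$ step is what item (3) actually demands, and it quietly corrects what reads as a typo ($T=\Th_{F}(\mu)$) in the paper's own proof.
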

\begin{proof}
$(3)\rightarrow (2)$: By Theorem~\ref{thm:characterization2}, there are continuum-many properly ergodic structures $\mu$ such that $\mu\modelsas T$, and $\Sigma\subseteq T$.

$(2)\rightarrow (1)$: Clear.

$(1)\rightarrow (3)$: Theorem~\ref{thm:characterization1} gives us a countable fragment $F'\supseteq F$, and a formula $\chi(\xx)$ in $F'$ such that $\mu(\chi(\xx))>0$, but for every $F'$-type $p$ containing $\chi(\xx)$, $\mu(p) = 0$. Let $T = \Th_F(\mu)$. Then $\Sigma\subseteq T$, and $T$ has trivial dcl by Theorem~\ref{thm:trivialdcl}. Now by Theorem~\ref{thm:rootedness}, the set of $\chi$-rooted models of $T$ has measure $1$. In particular, it is non-empty.
\end{proof}

\begin{remark}\label{rem:infinitarynecessity}
The conditions in Theorem~\ref{thm:main} (3) can sometimes be satisfied with $F' = F$. In fact, for many of the examples in Section~\ref{sec:examples}, we could take $F'$ to be first-order logic. However, Example~\ref{ex:infinitary} shows that, in general, the move to a larger fragment of $\Lwow$ is necessary.
\end{remark}

The following two corollaries, which may be of interest independently of Theorem~\ref{thm:main}, follow immediately from its proof in the case that $\mu$ is properly ergodic and from the analogous construction in \cite{AFP} in the case that $\mu$ is almost surely isomorphic to a countable structure.

\begin{corollary}\label{cor:borelmodel}
If $\mu$ is an ergodic structure, then for any countable fragment $F$ of $\Lwow$, the theory $\Th_F(\mu)$ has a Borel model (of cardinality $2^{\aleph_0}$).
\end{corollary}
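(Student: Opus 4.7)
The proof splits on whether $\mu$ is properly ergodic.

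If $\mu$ is properly ergodic, apply Theorem~\ref{thm:characterization1} to produce a countable fragment $F' \supseteq F$ and a formula $\chi(\xx) \in F'$ with $\mu(\chi(\xx))>0$ all of whose extensions to $F'$-types have $\mu$-measure $0$; set $T = \Th_{F'}(\mu)$, which has trivial dcl by Theorem~\ref{thm:trivialdcl} and a $\chi$-rooted model by Theorem~\ref{thm:rootedness}. Feed these data into the inverse-limit construction carried out in the proof of Theorem~\ref{thm:characterization2}. That proof builds a standard Borel space $\mathbb{X} \setminus \{*\}$ together with an atomless Borel probability measure $\nu$ and a Borel $L'$-structure $\bbM$ which, by the verifications labeled \emph{(i)} and \emph{(ii)} there, omits every partial type in $Q$ and satisfies the Morleyization $T'$ with strong witnesses. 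The strong-witness condition immediately gives $\bbM \models T'$ itself (universal axioms hold everywhere, and every pithy $\Pi_2$ axiom has an actual witness in $\bbM$ for each tuple), so by Theorem~\ref{thm:pithy} the $L$-reduct $\bbM \restriction L$ is a Borel model of $T \supseteq \Th_F(\mu)$.

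If instead $\mu$ is almost surely isomorphic to a countable structure $N$, then $\Th_F(\mu) = \Th_F(N)$, and by Theorem~\ref{thm:AFP1}, $N$ has trivial group-theoretic dcl. The analogous inverse-limit construction from~\cite{AFP} --- of which the construction used above is a refinement --- produces a Borel measured structure $(\bbM, \nu)$ whose $\mu_{\bbM,\nu}$-sample is almost surely isomorphic to $N$; because that construction enforces stage by stage the back-and-forth extension axioms witnessing the Scott sentence $\varphi_N$, the limit $\bbM$ itself satisfies $\varphi_N$, and in particular $\bbM \models \Th_F(N) = \Th_F(\mu)$.

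In both cases the domain of $\bbM$ is a standard Borel space carrying an atomless Borel probability measure, hence has cardinality exactly $2^{\aleph_0}$. The only point requiring genuine verification, beyond direct quotation of earlier results, is that $\bbM$ itself --- not merely a random countable substructure sampled from it --- models the target theory; in the properly ergodic case this is precisely the strong-witness content of Lemma~\ref{lem:strongwitnesses}, and in the singly-concentrated case it is the stage-by-stage enforcement of $\varphi_N$ built into the~\cite{AFP} construction.
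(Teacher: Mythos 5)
Your proposal is correct and follows essentially the same route as the paper, which derives the corollary directly from the inverse-limit construction in the proof of Theorem~\ref{thm:characterization2} (via Theorems~\ref{thm:characterization1}, \ref{thm:trivialdcl}, and~\ref{thm:rootedness}) when $\mu$ is properly ergodic, and from the analogous construction in~\cite{AFP} when $\mu$ concentrates on a countable structure. Your added observations --- that strong witnesses give $\bbM\models T'$ itself, that Theorem~\ref{thm:pithy} transfers this to the $L$-reduct, and that an atomless measure forces cardinality $2^{\aleph_0}$ --- are exactly the implicit content of the paper's one-line proof.
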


\begin{corollary}\label{cor:randomfree}
For every countable fragment $F$ of $\Lwow$, every ergodic structure $\mu$ is $F$-elementarily equivalent to a random-free ergodic structure $\mu'$. That is, there exists a random-free ergodic structure $\mu'$ such that $\Th_F(\mu) = \Th_F(\mu')$. 

Moreover, except in the case that $\mu$ concentrates on the isomorphism type of a highly homogeneous structure $M$, there exist continuum-many such $\mu'$.
\end{corollary}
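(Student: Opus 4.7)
The plan is to realize $\mu'$ as a sampling measure $\mu_{\bbM,\nu}$ from a measured structure. The key preliminary observation is that every such $\mu_{\bbM,\nu}$ is automatically random-free: the AHK system exhibited just before Lemma~\ref{lem:borelsampling} depends only on the singleton-indexed variables $\xi_{\{a\}}$, with the non-singleton randomness ignored entirely. So the task reduces to producing, for a given ergodic $\mu$, a measured structure whose sampling measure has $F$-theory equal to $\Th_F(\mu)$; and, when possible, producing continuum-many such measured structures that give pairwise distinct sampling measures.

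I would then split into two cases. If $\mu$ is properly ergodic, I apply Theorem~\ref{thm:main} to $\Sigma = \Th_F(\mu)$: since $\mu$ witnesses clause (1), clause (2) yields continuum-many properly ergodic $\mu'$ almost surely satisfying $\Sigma$. Inspection of the proof of Theorem~\ref{thm:characterization2} shows each such $\mu'$ arises as $\mu_{\bbM,\nu^{\mathcal{W}}}$ for a measured structure $(\bbM,\nu)$ and a weight $\mathcal{W}$, and is therefore random-free; since $\Th_F(\mu)$ is complete (Proposition~\ref{prop:complete}) and contained in $\Th_F(\mu')$, the two agree. If $\mu$ is not properly ergodic, then by ergodicity $\mu$ concentrates on a single countable structure $M$ having trivial group-theoretic dcl (Theorem~\ref{thm:AFP1}), so $\Th_F(\mu) = \Th_F(M)$. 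The \cite{AFP} construction (Remark~\ref{rem:AFPergodic}) produces a measured structure $(\bbM,\nu)$ with $\mu_{\bbM,\nu}$ concentrating on $M$, which is thus a random-free $\mu'$ of the desired form; and applying the rescaling technique of \cite{AFKP} to $\nu$ yields continuum-many $\mu_{\bbM,\nu^{\mathcal{W}}}$ concentrating on $M$, except when $M$ is highly homogeneous, which is precisely the exceptional case identified in \cite{AFKP}.

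The main obstacle is ensuring that the continuum-many structures produced by rescaling really do preserve the sampled-from-a-Borel-structure form, and hence random-freeness. In the properly ergodic case this is explicit in Part~3 of the proof of Theorem~\ref{thm:characterization2}, where rescaling modifies only the measure $\nu$ on a fixed Borel structure $\bbM$. In the non-properly ergodic case one must read the \cite{AFKP} argument to confirm the same point: the rescaling acts on the probability measure on the underlying Borel domain without altering the Borel relations, so the resulting measures are of the form $\mu_{\bbM,\nu^{\mathcal{W}}}$ and inherit random-freeness from the canonical AHK representation. Once this is in hand, both cases assemble immediately into the statement of the corollary.
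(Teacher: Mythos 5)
Your proposal is correct and follows essentially the same route the paper intends: the corollary is obtained by splitting into the properly ergodic case (where the construction in the proof of Theorem~\ref{thm:characterization2}, invoked via Theorem~\ref{thm:main}, yields continuum-many sampling measures $\mu_{\bbM,\nu^{\mathcal{W}}}$, which are random-free) and the concentrated case (where the \cite{AFP} construction plus the \cite{AFKP} rescaling gives the random-free measures, with high homogeneity as the only exception). Your observation that rescaling changes only the measure on a fixed Borel structure, hence preserves random-freeness, is exactly the point the paper relies on implicitly.
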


For a definition and discussion of high homogeneity, and the previously-mentioned characterization \cite{MR0401885} of highly homogeneous structures as those interdefinable with one of the five reducts of the rational linear order, see \cite[\S2.3]{AFKP}.

\section*{Acknowledgments}
Portions of this work appear in A.K.'s PhD thesis \cite[Chapters~1 and~2]{kruckman-thesis}, completed under the supervision of Thomas Scanlon, whom we thank for his support and insightful comments. We also thank Will Boney, Alexander Kechris, and Slawomir Solecki for helpful discussions, and Andrew Marks for bringing Example~\ref{ex:geometric} to our attention.

This research was facilitated by the Mathematical Sciences Research Institute program on Model Theory, Arithmetic Geometry, and Number Theory (January--May 2014), the workshop at U.C.\ Berkeley on Vaught's Conjecture (June 2015), and the Lorentz Center workshop on Logic and Random Graphs (August--September 2015).

Work on this publication by C.F.\ was made possible through the support of ARO grant W911NF-13-1-0212 and a grant from Google.


\newcommand{\etalchar}[1]{$^{#1}$}
\providecommand{\bysame}{\leavevmode\hbox to3em{\hrulefill}\thinspace}
\providecommand{\MR}{\relax\ifhmode\unskip\space\fi MR }
\providecommand{\MRhref}[2]{%
  \href{http://www.ams.org/mathscinet-getitem?mr=#1}{#2}
}
\providecommand{\href}[2]{#2}

\end{document}